\newtheorem{theorem}{Theorem}[section]
\newtheorem{lemma}[theorem]{Lemma}
\newtheorem{proposition}[theorem]{Proposition}
\newtheorem{definition}[theorem]{Definition}
\theoremstyle{definition}
\newtheorem{remark}[theorem]{Remark}
\newcommand{\PG}{\mathrm{PG}}
\newcommand{\K}{\mathbb{K}}
\title[Group-labeled light dual multinets]{Group-labeled light dual multinets in the projective plane (with Appendix)}
\author{G\'abor Korchm\'aros}
\address{Dipartimento di Matematica, Informatica ed Economia\\
	Universit\`a della Basilicata\\
	Contrada Macchia Romana\\
	85100 Potenza, Italy}
\email{gabor.korchmaros@unibas.it}
\author{G\'abor P. Nagy}
\address{Department of Algebra \\
	Budapest University of Technology and Economics\\
	Egry J\'ozsef utca 1\\
	H-1111 Budapest, Hungary}
\address{Bolyai Institute \\
	University of Szeged \\
	Aradi v\'ertan\'uk tere 1\\
	H-6720 Szeged, Hungary}
\email{nagyg@math.bme.hu}
\thanks{Research supported by NKFIH-OTKA Grants 114614 and 119687.}
\date{Version 30/09/2017.}
\begin{document}

\begin{abstract}
In this paper we investigate light dual multinets labeled by a finite group in the projective plane $PG(2,\mathbb{K})$  defined over a field $\mathbb{K}$. We present two classes of new examples. Moreover, under some conditions on the characteristic $\mathbb{K}$, we classify group-labeled light dual multinets with lines of length least $9$.
\end{abstract}

\maketitle

\section{Introduction}
Let $\PG(2,\mathbb{K})$ be the projective plane coordinatized by an algebraically closed field $\mathbb{K}$ of characteristic $p\geq  0$, and let $Q$ be a finite set equipped with a binary operation $x\cdot y$. A possibility of linking the algebraic structure of $Q$ to point-line incidences in $\PG(2,\mathbb{K})$ arises when $Q$ may be embedded in $\PG(2,\mathbb{K})$ in the sense that there exist three maps from $Q$ into $\PG(2,\mathbb{K})$, say $\alpha_1,\alpha_2,\alpha_3$, such that if $x\cdot y=z$ then $\alpha_1(x)$, $\alpha_2(y)$ and $\alpha_3(z)$ are three collinear points in $\PG(2,\mathbb{K})$. Here, the point-sets $\Lambda_i=\alpha_i(Q)$ with $i=1,2,3$ are called components and assumed to be pairwise disjoint. What we can derive from the algebraic structure of $Q$ in this way is a collection of properties of the incidence structure cut out on $\Lambda_1\cup\Lambda_2\cup \Lambda_3$ by lines meeting each component. In most cases, such an incidence structure is rather involved and not very interesting, but there are significant exceptional cases, some of which arise from Algebraic geometry.

In the simplest and perhaps nicest particular case,
each line of $\PG(2,\mathbb{K})$ meeting two distinct components meets each component in an exactly one point; in particular $|\Lambda_1|=|\Lambda_2|=|\Lambda_3|=n$. Therefore, the incidence structure is a dual $3$-net of order $n$, that is a $3$-net of order $n$ embedded in the dual plane of $PG(2,\mathbb{K})$. In this case, $Q$ must be a quasigroup and the maps $\alpha_i$ are bijections.
Conversely, if $\Lambda=(\Lambda_1,\Lambda_2,\Lambda_3)$ is any dual $3$-net in $\PG(2,\mathbb{K})$  then $\Lambda$ is a quasigroup-labeled dual $3$-net. In fact, there exists a quasigroup $Q$  of order $n$ together with a labeling of the sets $\Lambda_1$, $\Lambda_2$, $\Lambda_3$ by elements of $Q$ such that points labeled by $x,y,z$ are collinear if and only if $x\cdot y=z$ holds.

If $Q$ is a group, which is an important particular case in our context, a fairly complete classification of all dual $3$-nets is available for $p=0$ or $p>n$; see \cite{knp_3}: Apart from four groups of smaller orders ($n=8,12,24,60$), a group-labeled dual $3$-net is either algebraic, or of tetrahedron type. An algebraic dual $3$-net has its components lying on a plane cubic $\Gamma$ in $PG(2,\mathbb{K})$ and is labeled by an abelian group. More precisely, the group is cyclic when either $\Gamma$ splits into three non-concurrent lines (triangle type dual $3$-net), or into an irreducible conic and a line (conic-line type), otherwise the group is a subgroup of the Jacobian variety of $\Gamma$ and is either cyclic or the direct product of two cyclic groups. A tetrahedron type dual $3$-net is labeled by a dihedral group, and it can be viewed  in the projective space  $\PG(3,\mathbb{K})$ as the projection (from a point $P$ on a plane $\pi=\PG(2,\mathbb{K})$) of six point sets of size $n$ lying on the sides of a tetrahedron  whenever these point sets, named $\lambda_i,\lambda_i'$ with $i=1,2,3$, have the following two properties: $\lambda_i$ and $\lambda_i'$ lie on opposite sides, and $\lambda_i\cup\lambda_i'$ are the components of a dual $3$-net of order $2n$ in $\PG(3,\mathbb{K})$. In fact, if the center $P$ of the projection is chosen outside the faces of the tetrahedron and $\Lambda_i=\alpha_i(\lambda_i)\cup \alpha_i(\lambda_i')$ for $i=1,2,3$ then $(\Lambda_1,\Lambda_2,\Lambda_3)$ is a dual $3$-net of order $2n$.

A more general, yet interesting case for applications occurs when $Q$ is a quasigroup and each $\alpha_i$ is a bijection. The arising geometric configuration $(\Lambda_1,\Lambda_2,\Lambda_3)$ in $\PG(2,\mathbb{K})$, called \emph{light dual multinet}, has still some regularity, such as $|\ell\cap \Lambda_1|=|\ell\cap \Lambda_2|=|\ell\cap \Lambda_3|=r$ for any line $\ell$ in $\PG(2,\mathbb{K})$ where $r$ depends on $\ell$ and is called the length of $\ell$. Obviously, any dual $3$-net is a light dual multinet, but the converse is not true. A counterexample is obtained if the center of the projection in the above construction is chosen on a face (but a side) of the tetrahedron. The resulting light multinet of tetrahedron type is not a dual $3$-net since some lines in $\PG(2,\mathbb{K})$ meet each component in $n$ points; see \cite{Bartz2013,BartzYuz2014}. If $(\Lambda_1,\Lambda_2,\Lambda_3)$ is a light dual multinet other than dual $3$-nets, then some line in $\PG(2,\mathbb{K})$  meets each component in exactly $r>1$ points. This shows that if $(\Lambda_1,\Lambda_2,\Lambda_3)$ is algebraic, then it is contained in a reducible plane cubic, and hence is either of pencil type, or triangular, or of conic-line type according as the cubic splits into three lines $\ell_1,\ell_2, \ell_3$, either concurrent or not, or in an irreducible conic $C$ plus a  line $\ell$.

In this paper, we present two new families of light dual multinets of order $n$. Those in the first family have order divisible by $3$ and are of triangular type with $\ell_1,\ell_2,\ell_3$ of length $n/3$. Light dual multinets in the second family have even order and they are of conic-line type with $\ell$ of length $n/2$.

As we pointed out in our papers, if $p>0$ is small compared to the size of $Q$, many examples of dual $3$-nets and light multinets arise from configurations of finite projective subplanes.
Under the usual conditions on the characteristic of the underlying field, our main result describes group-labeled light dual multinets with a line of length at least $9$.

\begin{theorem}\label{th:main}
Let $\PG(2,\K)$ be the projective plane coordinatized by an algebraically closed field $\K$ of characteristic $p\geq 0$. Let $G$ be a group of order $n$ and assume $p=0$ or $p>n$. Let $\Lambda=(\Lambda_1,\Lambda_2,\Lambda_3)$ be a light dual multinet in $\PG(2,\K)$, labeled by $G$. If $\Lambda$ has a line of length at least $9$, then exactly one of the following cases occurs:
\begin{enumerate}[(i)]
\item[\rm(i)] $\Lambda$ is contained in a line.
\item[\rm(ii)] $\Lambda$ has a line of length $3$.
\item[\rm(iii)] $\Lambda$ is either triangular, or of tetrahedron type, or of conic-line type.
\end{enumerate}
\end{theorem}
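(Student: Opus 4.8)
My plan is to use the long line to impose a coset decomposition of $G$, to show that each resulting coset block lies on a line or on a conic, and then to feed the associated sub- or quotient-configuration into the classification of group-labeled dual $3$-nets from \cite{knp_3}.

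\emph{Step 1 (a long line is a coset line).} From a line $\ell$ of length $r$ I read the label sets $A,B,C\subseteq G$ on $\ell\cap\Lambda_1,\ell\cap\Lambda_2,\ell\cap\Lambda_3$, so $|A|=|B|=|C|=r$. When $r\ge2$, two of the chosen points already span $\ell$, so the labeling condition forces $AB\subseteq C$, hence $AB=C$ with $|AB|=r$. A short computation then shows that $H:=\{g\in G:gB=B\}$ is a subgroup with $|H|=r$ and that $A=aH$, $B=Hb$, $C=aHb$: indeed $aB=C$ for each $a\in A$ gives $A\subseteq aH$, the index count forces $|H|=r$, and $B$ collapses to a single right coset. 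Thus a line of length $r$ determines a subgroup of order exactly $r$, and the hypothesized line of length $r\ge9$ fixes a subgroup $H$ with $|H|\ge 9$ together with one coset block $\alpha_1(aH),\alpha_2(Hb),\alpha_3(aHb)$ lying on $\ell$.

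\emph{Step 2 (coset blocks lie on lines or conics).} Fixing this $H$, I partition each component according to the cosets of $H$ and study a single block, say $\alpha_1(gH)$ together with the matching blocks on $\Lambda_2,\Lambda_3$. Composing coset lines --- the product of two coset triples of $H$ is again a coset triple of $H$ --- transports incidences from the block on $\ell$ to an arbitrary block, exhibiting each block as an abstract light dual multinet of order $|H|\ge9$ carried by one coset. The structural claim I must establish is that every such block is contained in a line or in an irreducible conic. Since $\Lambda$ is a proper light dual multinet (it has the line $\ell$ of length $\ge2$), the curve assembled from these pieces is a reducible cubic, so the smooth-cubic (Jacobian) alternative of \cite{knp_3} cannot intervene and only lines and conics appear.

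\emph{Step 3 (classify and lift).} If $r=n$ then $\Lambda=\ell$ and case (i) holds. Otherwise, suppose first that every coset block lies on a line. I collapse each block to a point; this needs $H\trianglelefteq G$, which the two-sided pattern $aH,Hb,aHb$ forces, and produces a $G/H$-labeled light dual multinet $\bar\Lambda$ of order $m=n/r<n$ whose length-$1$ lines are the contracted $H$-lines. Iterating the contraction until no line of length $\ge2$ survives yields a genuine dual $3$-net, to which \cite{knp_3} applies legitimately, since $p=0$ or $p>n$ and the order of the net never exceeds $n$: the net is algebraic or of tetrahedron type. The pencil of three concurrent lines carries no group labeling and is excluded, while the reverse blow-up reinstating the $r$ collinear points of each block is type-preserving, so $\Lambda$ is triangular or of tetrahedron type, i.e.\ case (iii). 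If instead some block lies on a genuine conic, I argue that this conic is shared by all blocks off a distinguished line while the remaining blocks fill that line, so that $\Lambda$ is of conic-line type, again case (iii).

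\emph{Main obstacle.} The crux is Step 2: upgrading the one long line into the global statement that every coset block lies on a line or a conic, and, in the first instance, that $H$ is normal. This is exactly where $r\ge9$ is used. It supplies enough mutually overlapping coset lines for the propagation to close up, it places each block well beyond the five points that pin down a conic so that the line/conic dichotomy becomes rigid, and it keeps the whole contraction clear of the order-$3$ (Hesse-type) phenomena. Those phenomena are precisely what survive as the separate alternative (ii): whenever the propagation fails to close, it does so by producing a line of length $3$. Isolating the finitely many small exceptional blocks and certifying that each such failure is a genuine length-$3$ obstruction rather than a disguised algebraic configuration is the most delicate part of the bookkeeping.
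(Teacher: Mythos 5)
Your Step~1 is correct and coincides with the paper's own starting point (the relabeling lemma: the line of maximal length corresponds to a subgroup $H$ of $G$, with the blocks on $\ell$ being cosets). After that, the proposal has two genuine gaps.

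The most serious one is in Step~3, where you assert that $H\trianglelefteq G$ ``is forced by the two-sided pattern $aH,Hb,aHb$''. It is not: that pattern is perfectly consistent with a non-normal $H$ --- $\Lambda_1$ splits into left cosets, $\Lambda_2$ into right cosets, $\Lambda_3$ into double translates $g_1Hg_2$, and no contradiction arises at the level of labels alone. Proving normality is precisely the hard core of the paper (Proposition \ref{pr:Hisnormal}): its proof runs through the non-extendability lemmas (Lemmas \ref{lm:triang_noext}, \ref{lm:conicline_noext}, \ref{lm:tetrahedron_noext}), and in the self-normalizing case invokes Herstein's theorem to get $G=S\rtimes H$ with $S$ elementary abelian, Horo\v{s}evski\u{\i}'s theorem to get $|S|>|H|$, and the elementary abelian analysis of Proposition \ref{pr:elemabelian}. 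That is exactly where the two hypotheses ``a line of length at least $9$'' and ``no line of length $3$'' are consumed. Your proposal never confronts a non-normal $H$; your closing remark that ``whenever the propagation fails to close, it does so by producing a line of length $3$'' is an assertion of the theorem's hardest case, not a proof of it.

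The second gap is Step~2 together with the contraction in Step~3. For a block adjacent to $\ell$, such as $(\alpha_1(H),\alpha_2(Hg),\alpha_3(Hg))$, the line/conic dichotomy does follow (Lemma \ref{lm:Hiscyclic}(iii),(iv), via \cite{bkm}). But for a block far from $\ell$, say $(\alpha_1(g_1H),\alpha_2(Hg_2),\alpha_3(g_1Hg_2))$, all one knows a priori is containment in some cubic (Proposition \ref{pr:3net-properties}(v)), and that cubic may be irreducible: your sentence ``the curve assembled from these pieces is a reducible cubic'' has no argument behind it, since the long line $\ell$ need not be a component of the cubic through a distant block. The paper eliminates irreducible cubics by intersection counts ($|C\cap\Gamma|>6$ forces a common component) and by matching fixed points of cyclic collineation groups (see the proofs of Lemma \ref{lm:abelian_triang_or_conicline} and Proposition \ref{pr:Hisnormal}). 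Finally, collapsing each collinear block ``to a point'' is not a geometric operation: no point of $\PG(2,\K)$ is canonically attached to a block, and if one instead passes to the carrier lines and dualizes, the required collinearity fails --- for a triangular dual subnet the three carrier lines form a triangle, not a pencil --- so no $G/H$-labeled light dual multinet in $\PG(2,\K)$ is produced and the classification of \cite{knp_3} cannot be applied to it. The paper avoids any quotient construction: once normality is in hand, it concludes directly with Lemma \ref{lm:abelian_triang_or_conicline}.
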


The paper is organized as follows. In Section 2 we put down the details on the labeling of light dual multinets and recall some facts of dual nets. In Section 3 we present our new examples of light dual multinets. In Section 4 we study light dual multinets labeled by the cyclic group. In Proposition \ref{pr:cyclic_alg_net} we show that such multinets are algebraic. In Section 5 we prove a series of lemmas on the non-extendability of the known light dual multinet constructions. Finally, in Section 6 we prove Theorem \ref{th:main} using the geometric results obtained so far, and some theorems of Herstein and Horo\v{s}evski\u{\i} on abstract finite groups.

For a thorough discussion on the applications of finite multinets in Algebraic geometry, especially in the study of completely reducible fibers of pencils of hypersurfaces, and also in complex line arrangements and resonance theory; see \cite{bkm,bkn,fy2007,knp_3,knp_k,miq,per,ys2004,ys2007}.

%
%
%

\section{Notation, terminology and background on dual multinets}

\subsection{Light dual multinets}

A \emph{quasigroup} is a set $Q$ endowed with a binary operation $x\cdot y$ such that the equation $x\cdot y=z$ can be uniquely resolved whenever two of the three values $x,y,z\in Q$ are known. Quasigroups with a multiplicative unit element are called \emph{loops.} Two quasigroups $Q(\cdot)$ and $R(\circ)$ are \emph{isotopic} if $\alpha(x)\circ\beta(y)=\gamma(x\cdot y)$ holds for some bijective maps $\alpha,\beta,\gamma:\, Q\rightarrow R$. If $Q(\cdot)$ and $R(\circ)$ are isotopic groups and the are isomorphic.

\begin{definition}
Let $\K$ be an algebraically closed field and $(Q,\cdot)$ be a finite quasigroup of order $n$. A \emph{dual multinet labeled by $Q$} is a triple $(\alpha_1,\alpha_2,\alpha_3)$ of maps $Q\to \PG(2,\K)$ such that for any $x,y\in Q$, the points $\alpha_1(x), \alpha_2(y)$ and $\alpha_3(xy)$ are collinear. The dual multinet is said to be \emph{light} if the maps $\alpha_1,\alpha_2,\alpha_3$ are injective.
\end{definition}
If it is also requested that for any $x,y\in Q$, the points $\alpha_1(x), \alpha_2(y)$ and $\alpha_3(z)$ are only collinear when  $z=x\cdot y$ then the light dual multinet is a dual $3$-net.

The point sets $\Lambda_i=\alpha_i(Q)$ are the \emph{components} of the dual multinet and we view the map $\alpha_i$ as a \emph{labeling} of the component $\Lambda_i$. However, if a dual multinet is not a dual $3$-net it may happen that $(\Lambda_1,\Lambda_2,\Lambda_3)$ depends on the labelings. Therefore, $(\Lambda_1,\Lambda_2,\Lambda_3)$ stands for the dual multinet arising from $Q$ whenever the labelings are clear from the context.
Bearing this in mind, we associate the incident structure to a dual multinet whose points are those of the components and whose lines are cut out by the lines meeting all three components. Also, each such line of $PG(2,\mathbb{K})$ is said \emph{to belong to the dual multinet}. For instance, for $p_1=\alpha_1(x)$, $p_2=\alpha_2(y)$, $p_3=\alpha_3(z)$, the lines $p_1p_2$, $p_1p_3$, $p_2p_3$ hit the third component in the points $\alpha_3(xy)$, $\alpha_2(x\backslash z)$, $\alpha_1(z/y)$, respectively, and hence the belong to the multiset.

\begin{lemma} \label{lm:lengthdef}
Let $\ell$ be a line belonging to the dual multinet $(\Lambda_1,\Lambda_2,\Lambda_3)$. Put $S_i=\{ x\in Q \mid \alpha_i(x) \in \ell \}$. Then, $S_1\cdot S_2\subseteq S_3$, $S_3/S_2\subseteq S_1$, $S_1\backslash S_3\subseteq S_2$. In particular, $|S_1|=|S_2|=|S_3|$. The latter integer is called the \emph{length} of $\ell$ w.r.t. the given dual multinet. \qed
\end{lemma}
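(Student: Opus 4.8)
The plan is to exploit the single geometric fact that two distinct points of $\ell$ span exactly $\ell$, together with the defining collinearity axiom of the dual multinet. Recall first the quasigroup division notation: for $x,y,z\in Q$ with $x\cdot y=z$ one writes $y=x\backslash z$ and $x=z/y$, and these are well defined precisely because in a quasigroup every left translation $\lambda_a\colon t\mapsto a\cdot t$ and every right translation $\rho_a\colon t\mapsto t\cdot a$ is a bijection of $Q$. I shall also use that the three components are pairwise disjoint, so that whenever two of the points $\alpha_1(x),\alpha_2(y),\alpha_3(z)$ are compared they are automatically distinct, which is exactly what prevents a degenerate reading of the collinearity condition.

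For the three inclusions I would run the same three-line argument in each case. To prove $S_1\cdot S_2\subseteq S_3$, take $x\in S_1$ and $y\in S_2$ and set $z=x\cdot y$. By definition $\alpha_1(x),\alpha_2(y)\in\ell$, and these two points are distinct since they lie in different (disjoint) components; hence the line they span is $\ell$ itself. The dual multinet axiom makes $\alpha_1(x),\alpha_2(y),\alpha_3(z)$ collinear, so $\alpha_3(z)$ lies on $\ell$, that is $z\in S_3$. The inclusion $S_3/S_2\subseteq S_1$ is obtained the same way: for $z\in S_3$ and $y\in S_2$ put $x=z/y$, so that $x\cdot y=z$; now $\alpha_2(y),\alpha_3(z)$ are distinct points of $\ell$ spanning $\ell$, and collinearity of $\alpha_1(x),\alpha_2(y),\alpha_3(z)$ forces $\alpha_1(x)\in\ell$, i.e. $x\in S_1$. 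Finally $S_1\backslash S_3\subseteq S_2$ follows by taking $x\in S_1$, $z\in S_3$, $y=x\backslash z$ and arguing from the distinct points $\alpha_1(x),\alpha_3(z)$ of $\ell$.

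For the equality of cardinalities I would convert the inclusions into statements about the translations. Since $\ell$ belongs to the multinet it meets all three components, so $S_1,S_2,S_3$ are nonempty; fix $y_0\in S_2$ and $x_0\in S_1$. The first inclusion gives $\rho_{y_0}(S_1)\subseteq S_3$, so $\rho_{y_0}$ restricts to an injection $S_1\to S_3$ and $|S_1|\leq|S_3|$; the second inclusion gives that $\rho_{y_0}^{-1}$, namely $z\mapsto z/y_0$, sends $S_3$ into $S_3/S_2\subseteq S_1$, so it restricts to an injection $S_3\to S_1$ and $|S_3|\leq|S_1|$. Hence $|S_1|=|S_3|$. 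Symmetrically, using $\lambda_{x_0}$ with the first and third inclusions gives $\lambda_{x_0}(S_2)\subseteq S_3$ and $x\mapsto x_0\backslash x$ sending $S_3$ into $S_1\backslash S_3\subseteq S_2$, whence $|S_2|=|S_3|$, and the three sets have the same size.

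I do not expect a serious obstacle here: the whole statement is essentially forced once the division notation and the ``two distinct points span $\ell$'' principle are in place. The only points that genuinely require care are the degenerate case of the collinearity condition, which is ruled out by the disjointness of the components guaranteeing that the two reference points are distinct, and the nonemptiness of the sets $S_i$ needed to choose $x_0$ and $y_0$, which holds because a line belonging to the multinet meets every component.
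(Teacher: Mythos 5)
Your proof is correct and is exactly the routine verification that the paper leaves implicit (the lemma is stated with a \qed{} and no written argument): the three inclusions follow from the collinearity axiom once one notes that the two reference points lie in distinct, pairwise disjoint components and hence span $\ell$, and the equality of cardinalities follows because the left and right translations of the quasigroup $Q$ are bijections restricting to injections between the sets $S_i$. There is nothing to add; your handling of the degenerate-collinearity issue and of the nonemptiness of the $S_i$ is precisely the care the paper's omitted proof presupposes.
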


\begin{remark}
In \cite[Section 3]{per}, the authors showed (in the primal setting) that $|\ell \cap \Lambda_i|$ is independent of $i$.
\end{remark}

\subsection{Relabeling} Let $\ell$ be a line belonging to the dual multinet $(\Lambda_1,\Lambda_2,\Lambda_3)$ and assume $\alpha_1(u),\alpha_2(v),\alpha_3(uv) \in \ell$ for some $u,v\in Q$. Define the map $\beta_i:Q\to \Lambda_i$ ($i=1,2,3$) by
\[\beta_1(x)=\alpha_1(x/v), \qquad \beta_2(y)=\alpha_2(u\backslash y), \qquad \beta_3(z)=\alpha_3(z).\]
Then, the $\beta_i$'s are another labelings of $(\Lambda_1,\Lambda_2,\Lambda_3)$ by $Q$ with respect to the operation
\[ x\circ y= (x/v)(u\backslash y).\]
Indeed, $x\circ y=z$  if and only if $(x/v)(u\backslash y)=z$, and, $\beta_1(x),\beta_2(y),\beta_3(z)$ are collinear if and only if $\alpha_1(x/v), \alpha_2(u\backslash y),\alpha_3(z)$ are collinear. The quasigroup $(Q,\circ)$ is a loop with unit element $e=uv$; it is also called the principal loop isotope of $(Q,\cdot)$. The elements $\beta_1(e)=\alpha_1(u), \beta_2(e)=\alpha_2(v)$ and $\beta_3(e)=\alpha_3(uv)$ are on $\ell$. This proves the following

\begin{lemma} \label{lm:relabeling}
Assume that the dual multinet $\Lambda=(\Lambda_1,\Lambda_2,\Lambda_3)$ is labeled by the quasigroup $(Q,\cdot)$ with labeling $(\alpha_1,\alpha_2,\alpha_3)$. Let $\ell$ be a line belonging to $\Lambda$. Then there is a loop isotope $(Q,\circ,e)$ of $(Q,\cdot)$ and a corresponding relabeling $\beta_i:Q\to \Lambda_i$ ($i=1,2,3$) of $\Lambda$ such that $\beta_i(e) \in \ell$. \qed
\end{lemma}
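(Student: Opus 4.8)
The plan is to realize the desired relabeling as the passage to a principal loop isotope of $(Q,\cdot)$ whose unit is carried onto $\ell$. First I would produce a suitable base triple. Since $\ell$ belongs to $\Lambda$, it meets all three components, so I may pick $u,v\in Q$ with $\alpha_1(u),\alpha_2(v)\in\ell$. As the components are pairwise disjoint, these are two distinct points, hence they span the unique line $\ell$; the defining collinearity of the dual multinet, applied to the pair $(u,v)$, then forces $\alpha_3(uv)$ onto the line $\alpha_1(u)\alpha_2(v)=\ell$. Thus $\alpha_1(u),\alpha_2(v),\alpha_3(uv)\in\ell$.

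Next I would introduce the operation $x\circ y=(x/v)(u\backslash y)$ together with $\beta_1(x)=\alpha_1(x/v)$, $\beta_2(y)=\alpha_2(u\backslash y)$, $\beta_3(z)=\alpha_3(z)$, and verify the loop structure. Since $x\mapsto x/v$ and $y\mapsto u\backslash y$ are bijections of $Q$, the operation $\circ$ is isotopic to $\cdot$ and hence is again a quasigroup; a one-line computation, $uv\circ y=(uv/v)(u\backslash y)=u(u\backslash y)=y$ and $x\circ uv=(x/v)(u\backslash uv)=(x/v)v=x$, shows that $e=uv$ is a two-sided unit, so $(Q,\circ,e)$ is a loop.

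I would then check that the $\beta_i$ label $\Lambda$ with respect to $\circ$. For arbitrary $x,y$, applying the dual multinet axiom to the pair $(x/v,\,u\backslash y)$ shows that $\alpha_1(x/v),\alpha_2(u\backslash y),\alpha_3((x/v)(u\backslash y))$ are collinear, which is precisely the collinearity of $\beta_1(x),\beta_2(y),\beta_3(x\circ y)$. Injectivity of each $\beta_i$ (hence lightness) and $\beta_i(Q)=\Lambda_i$ follow from injectivity of $\alpha_i$ and bijectivity of the division maps. Finally, substituting $e=uv$ yields $\beta_1(e)=\alpha_1(u)$, $\beta_2(e)=\alpha_2(v)$, $\beta_3(e)=\alpha_3(uv)$, all of which lie on $\ell$ by the first step.

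The only genuinely delicate point is the very first step: securing a base triple all of whose images lie on $\ell$. Here it matters that a light dual multinet need not be a dual $3$-net, so collinearity of $\alpha_1(x),\alpha_2(y),\alpha_3(z)$ does not in general entail $z=x\cdot y$, and one cannot argue in reverse. What saves the argument is that ``belonging to $\Lambda$'' means $\ell$ actually meets $\Lambda_1$ and $\Lambda_2$, whereupon the forward direction of the collinearity axiom does the rest, placing $\alpha_3(uv)$ on $\ell$ automatically. Everything afterwards is a routine verification driven by the loop identities and the quasigroup division rules.
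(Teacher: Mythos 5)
Your proof is correct and takes essentially the same route as the paper: the paper likewise passes to the principal loop isotope $x\circ y=(x/v)(u\backslash y)$ with relabeling $\beta_1(x)=\alpha_1(x/v)$, $\beta_2(y)=\alpha_2(u\backslash y)$, $\beta_3(z)=\alpha_3(z)$ and unit $e=uv$. Your extra opening step, showing that a base triple $\alpha_1(u),\alpha_2(v),\alpha_3(uv)\in\ell$ exists because $\alpha_1(u)$ and $\alpha_2(v)$ already span $\ell$, is exactly the argument behind Lemma~\ref{lm:lengthdef} (the inclusion $S_1\cdot S_2\subseteq S_3$), which the paper assumes implicitly rather than spelling out.
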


Lemmas \ref{lm:lengthdef} and \ref{lm:relabeling} imply

\begin{lemma}
Let $\Lambda=(\Lambda_1,\Lambda_2,\Lambda_3)$ be a dual multinet labeled by the quasigroup $Q$ with labeling $(\alpha_1,\alpha_2,\alpha_3)$. Let $\ell$ be a line belonging to $\Lambda$. Up to isotopy and relabeling, $Q$ is a loop which has a subloop $S$ such that $\alpha_i(S)=\ell \cap \Lambda_i$.
\end{lemma}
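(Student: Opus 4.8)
The plan is to combine the two preceding lemmas, the entire content being absorbed into the interplay between the division-closure relations of Lemma~\ref{lm:lengthdef} and the existence of a unit element lying on $\ell$ guaranteed by Lemma~\ref{lm:relabeling}. First I would invoke Lemma~\ref{lm:relabeling} to pass from $(Q,\cdot)$ with labeling $(\alpha_1,\alpha_2,\alpha_3)$ to a loop isotope $(Q,\circ,e)$ with the relabeling $(\beta_1,\beta_2,\beta_3)$, so that $\beta_i(e)\in\ell$ for every $i$. From then on I work exclusively with this loop and this labeling, and I set $S_i=\{x\in Q\mid \beta_i(x)\in\ell\}$.

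The crucial gain from the relabeling is that $e\in S_1\cap S_2\cap S_3$. Applying Lemma~\ref{lm:lengthdef} to $(\beta_1,\beta_2,\beta_3)$ gives $S_1\circ S_2\subseteq S_3$, $S_3/S_2\subseteq S_1$ and $S_1\backslash S_3\subseteq S_2$. I would now feed the unit into these inclusions one coordinate at a time: since $x\circ e=x$ and $e\circ y=y$, setting the second (resp.\ first) factor equal to $e$ in $S_1\circ S_2\subseteq S_3$ yields $S_1\subseteq S_3$ and $S_2\subseteq S_3$; putting $e$ in the divisor slot of $S_3/S_2\subseteq S_1$ and in the left slot of $S_1\backslash S_3\subseteq S_2$ gives $S_3\subseteq S_1$ and $S_3\subseteq S_2$. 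Hence $S_1=S_2=S_3$, and I denote this common set by $S$.

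It remains to check that $S$ is a subloop and to identify its image. The three inclusions now read $S\circ S\subseteq S$, $S/S\subseteq S$ and $S\backslash S\subseteq S$, so $S$ is closed under multiplication and both divisions and contains $e$; since $Q$ is finite this already forces $S$ to be a subquasigroup, and as it contains the identity $e$ of the ambient loop it is a subloop. Finally, for any map the image of the preimage of a set equals the intersection of that set with the image, so $\beta_i(S)=\beta_i(S_i)=\ell\cap\beta_i(Q)=\ell\cap\Lambda_i$, as required.

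I expect no serious obstacle here; the only point needing care is conceptual rather than technical, namely recognising that \emph{before} relabeling the sets $S_1,S_2,S_3$ need not coincide, and that it is precisely the unit $e$ supplied by Lemma~\ref{lm:relabeling} that collapses the three division-closed sets into a single subloop. One should also keep in mind that closure under the operation already suffices in the finite case, so the division inclusions are really there to pin down the unit and the subloop structure at once.
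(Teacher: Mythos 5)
Your proof is correct and takes exactly the route the paper intends: the paper states this lemma as an immediate consequence of Lemmas~\ref{lm:lengthdef} and~\ref{lm:relabeling} without writing out details, and your argument (relabeling via Lemma~\ref{lm:relabeling} to place the unit $e$ on $\ell$, then feeding $e$ into the inclusions of Lemma~\ref{lm:lengthdef} to force $S_1=S_2=S_3$ and subloop closure) is precisely the filling-in of that implication. All steps check out, including the set-theoretic identity $\beta_i(S_i)=\ell\cap\Lambda_i$, which indeed needs no injectivity.
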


 A dual multinet $(\Lambda_1,\Lambda_2,\Lambda_3)$ is \emph{algebraic,} if all points of its components are contained in a cubic curve $\Gamma$, not necessarily irreducible or nonsingular. If $\Gamma$ splits into  a line and an irreducible conic, then the dual multinet is said to be of \emph{conic-line type.} If $\Gamma$ splits into three lines, then the dual multinet is said to be of \emph{pencil} or of \emph{triangle type,} depending upon whether the lines are concurrent or not. This terminology was introduced for dual $3$-nets and it is also meaningful and useful for dual multinets.

\subsection{Dual $3$-nets} As we have already pointed out, dual $3$-nets are particular light dual multinets which have been investigated in a series of papers. Some of their properties are collected in the following proposition.

\begin{proposition} \label{pr:3net-properties}
Let $\Lambda=(\Lambda_1,\Lambda_2,\Lambda_3)$ be a dual $3$-net labeled by the quasigroup $Q$ with labeling $(\alpha_1,\alpha_2,\alpha_3)$ in the projective plane $\PG(2,\K)$. Let $n$ be the order of $Q$ and $p$ the characteristic of $\K$.
\begin{enumerate}[(i)]
\item (\cite[Section 4.2]{knp_3} and {\cite[Proposition 3.3]{ys2004}}) Let $\Lambda$ be of triangle type and denote the lines containing $\Lambda_i$ by $\ell_i$, $i=1,2,3$. Then $Q$ is a cyclic group of order $n$ and there is an abelian collineation group of order $n^2$ leaving the $\Lambda_i$'s invariant.
\item (\cite[Lemma 3]{knp_3} and {\cite[Proposition 4.3]{ys2004}}) Let $\Lambda$ be of pencil type. Then $Q$ is a subgroup of the additive group of $\mathbb{K}$. In particular, $n$ is a power of $p$.
\item (\cite[Section 4.3]{knp_3} and {\cite[Proposition 5.6]{ys2004}}) Let $\Lambda$ be of conic-line type such that $\Lambda_1$ is contained in the line $\ell$ and $\Lambda_2\cup \Lambda_3$ is contained in the irreducible conic $C$. Then $Q$ is a cyclic group of order $n$. Moreover, there is a generating element $h\in Q$ and a collineation $\varphi$ of order $n$ preserving the $\Lambda_i$'s, such that for all $x\in Q$, $\varphi$ maps the point $\alpha_3(x)$ to $\alpha_3(xh)$.
\item (\cite[Theorem 5.1]{bkm}) If $p=0$ or $p>n$ and $\Lambda_1$ is contained in a line then $\Lambda_2\cup \Lambda_3$ is contained in a conic. $Q$ is a cyclic group of order $n$.
\item (\cite[Proposition 18]{knp_3} and {\cite[Theorem 5.3]{ys2004}}) Dual $3$-nets labeled by a cyclic group are algebraic. \qed
\end{enumerate}
\end{proposition}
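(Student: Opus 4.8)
The plan is to treat parts (i)--(iii) by a single device --- coordinatizing the reducible cubic and reading off the collinearity of $\alpha_1(x),\alpha_2(y),\alpha_3(xy)$ as an explicit polynomial relation among three affine parameters, one per component --- and then to import the two genuinely global statements (iv) and (v) from \cite{bkm} and \cite{knp_3}. Throughout I would first apply Lemma \ref{lm:relabeling} to a line of the net, so that $Q$ may be assumed to be a loop whose unit lies on that line; the parameter relation then exhibits the loop as a subgroup of $(\K^{*},\cdot)$ or of $(\K,+)$, and a finite such subgroup is cyclic or an elementary abelian $p$-group respectively.

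For (i) and (ii) I would take the three lines to be the sides of the coordinate triangle, respectively the pencil through one vertex. In the triangle case, writing the points as $(0:1:u)$, $(1:0:v)$, $(1:w:0)$, the identity
\[ \det\begin{pmatrix} 0 & 1 & u \\ 1 & 0 & v \\ 1 & w & 0 \end{pmatrix}=v+uw \]
turns collinearity into $v=-uw$; after absorbing the sign by a relabeling the operation is ordinary multiplication, so $Q$ is a finite subgroup of $\K^{*}$, hence cyclic of order $n$. The diagonal torus $g_{\lambda,\mu}\colon(X_0:X_1:X_2)\mapsto(X_0:\lambda X_1:\mu X_2)$ acts on the parameters by $u\mapsto(\mu/\lambda)u$, $v\mapsto\mu v$, $w\mapsto\lambda w$, and the subgroup with $\lambda,\mu$ ranging over the $n$-th roots of unity preserves each $\Lambda_i$ and is abelian of order $n^{2}$. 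For the pencil I would centre the frame at the common point; the analogous determinant is linear in the transverse parameters with coefficients summing to zero, so the relation is additive, $Q$ embeds in $(\K,+)$, a finite subgroup there is an $\mathbb{F}_p$-vector space, and hence $n$ is a power of $p$ (the case $p=0$ being vacuous).

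For (iii) I would parametrize the conic as $P(t)=(1:t:t^{2})$ and take $\ell$ to be the secant $X_1=0$. The chord of $P(t_1)$ and $P(t_2)$ is $t_1t_2X_0-(t_1+t_2)X_1+X_2=0$, which meets $\ell$ at $(1:0:-t_1t_2)$; hence, with the parameter $s=X_2/X_0$ on $\ell$, collinearity of a point of $\Lambda_1$ with two conic points reads $s=-t_1t_2$. This is again multiplicative, so $Q$ is cyclic of order $n$, and the collineation $\varphi\colon(X_0:X_1:X_2)\mapsto(X_0:\lambda X_1:\lambda^{2}X_2)$ fixes the conic and $\ell$, sends $P(t)$ to $P(\lambda t)$, and for $\lambda$ a primitive $n$-th root of unity has order $n$ with $\varphi(\alpha_3(x))=\alpha_3(xh)$ for the generator $h$ corresponding to $\lambda$.

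The main obstacle is (iv), and with it (v); these are exactly the statements that cannot be reached by coordinatization alone and where the hypothesis $p=0$ or $p>n$ is indispensable. For (iv) the plan is to show that a line component $\Lambda_1$ forces $\Lambda_2\cup\Lambda_3$ onto a conic, after which (iii) applies; the difficulty is precisely the existence of this conic, since in small characteristic configurations coming from subplanes obstruct it and the bound $p>n$ is what excludes them. I would import this conic-existence step from \cite[Theorem 5.1]{bkm}. Finally, for (v) --- that every cyclic-labeled dual $3$-net is algebraic --- I would, granting (i)--(iv) and the classification of \cite[Proposition 18]{knp_3}, check that each realization of a cyclic group lands on one of the three degenerate cubics or on a smooth cubic, so that all its points lie on a cubic, and I would cite \cite{knp_3} for this last step rather than reprove it.
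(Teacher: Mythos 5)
You should know at the outset that the paper does not actually prove this proposition: it is a compilation of results quoted from \cite{knp_3}, \cite{ys2004} and \cite{bkm}, stated with inline citations and a terminal QED. So any real argument differs from the paper, and what you propose for (i)--(iii) is essentially a reconstruction of the coordinatization proofs used in those sources. For (i) and (ii) your argument is correct, with two routine points that should be made explicit: the components of a triangle-type (resp.\ pencil-type) dual $3$-net must avoid the vertices of the triangle (resp.\ the common point of the pencil), since otherwise injectivity of the $\alpha_i$ fails; this is what guarantees that your parameters $u,v,w$ are nonzero and the relation $v=-uw$ really lives in $\K^{*}$. Also, since different labelings of one dual $3$-net give only an isotopy class of quasigroups, the conclusion ``$Q$ is a cyclic group'' has to be read, as you correctly do, through Lemma \ref{lm:relabeling}: the principal loop isotope embeds in $\K^{*}$ (or in $(\K,+)$), a finite subset of a group closed under the operation is a subgroup, and isotopic groups are isomorphic. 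Your torus of order $n^{2}$ in (i), your collineation $\varphi$ in (iii), and the decision to import (iv) and (v) from \cite{bkm} and \cite{knp_3} --- exactly what the paper does --- are all fine.

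The genuine gap is in (iii): you declare $\ell$ to be a \emph{secant} of $C$ without justification. Over an algebraically closed field $\ell$ meets $C$ either in two points or tangentially in one, and the two cases behave differently. For a secant your chord computation gives the multiplicative relation $s=-t_1t_2$; but if $\ell$ is tangent (say $\ell: X_0=0$, touching $C:X_1^2=X_0X_2$ at $(0:0:1)$), the same chord $t_1t_2X_0-(t_1+t_2)X_1+X_2=0$ meets $\ell$ at $(0:1:t_1+t_2)$, so the relation is the \emph{additive} one $s=t_1+t_2$, and $Q$ embeds in $(\K,+)$, hence is an elementary abelian $p$-group. This tangent configuration is genuinely realizable in characteristic $p>0$: taking the parameter sets of $\Lambda_2,\Lambda_3$ to be cosets $c_1+V$, $c_2+V$ of a finite $\mathbb{F}_p$-subspace $V\subset\K$ with $c_1-c_2\notin V$, one checks that the result is a dual $3$-net labeled (up to isotopy) by $V\cong\mathbb{F}_p^{k}$, which is not cyclic for $k\geq 2$. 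So item (iii) is literally false without a hypothesis excluding tangency, and your proof needs the one-line repair: under $p=0$ or $p>n$ (the hypothesis in force in the cited sources, though not written into item (iii) itself), the tangent case would force $n=p^{k}$ with $p\leq n$, which is impossible for $n\geq 2$; hence $\ell$ is a secant and your multiplicative argument applies.
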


\begin{proposition}[{\cite[Proposition 10]{knp_3}}] \label{pr:triangles}
If $(\Gamma_1,\Gamma_2,\Gamma_3)$ and $(\Sigma_1,\Sigma_2,\Sigma_3)$ are triangular dual $3$-nets of order $n\geq 3$ such that $\Gamma_1=\Sigma_1$, then the associated triangles share the vertices on their common side. \qed
\end{proposition}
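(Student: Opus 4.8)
The plan is to reduce the whole statement to one fact about projective transformations of a line that stabilise the set $\mu_n$ of $n$-th roots of unity. First I would observe that, since $n\ge 3$, the common component $\Gamma_1=\Sigma_1$ is a set of $n$ collinear points and hence determines its supporting line $\ell$ uniquely; so $\ell$ is a common side of both triangles, and on $\ell$ each triangle has two vertices, cut out by its other two sides. Next I would bring each net into a normal form along $\ell$ using Proposition~\ref{pr:3net-properties}(i): the labelling group is cyclic of order $n$ and there is an abelian collineation group of order $n^2$ fixing each component setwise. As $n\ge 3$, this group fixes the three vertices, so in coordinates making them the fundamental points it is diagonal, and its restriction to $\ell$ is the rotation group $u\mapsto\epsilon u$ of order $n$ fixing the two vertices $u=0,\infty$. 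Since a set invariant under an order-$n$ rotation that has exactly $n$ points must be a single (free) orbit, $\Gamma_1$ is one such orbit, i.e.\ $\Gamma_1=\mu_n$ after a rescaling of $u$. Carrying out the same normalisation for the $\Sigma$-net in its own adapted coordinate $w$ on $\ell$ gives $\Sigma_1=\mu_n$ with the $\Sigma$-vertices at $w=0,\infty$; writing $w=\phi(u)$ with $\phi\in\mathrm{PGL}(2,\K)$, the identity $\Gamma_1=\Sigma_1$ of point sets becomes precisely $\phi(\mu_n)=\mu_n$.

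The hard part is the ensuing purely projective claim: \emph{for $n\ge 3$, every $\phi\in\mathrm{PGL}(2,\K)$ with $\phi(\mu_n)=\mu_n$ stabilises the pair $\{0,\infty\}$.} I would prove this by a divisibility argument. Writing $\phi(u)=(au+b)/(cu+d)$ with $ad-bc\ne 0$, the requirement that $\phi$ send each root of $u^n-1$ to another such root (no root may go to $\infty$, since $\infty\notin\mu_n$) forces the degree-$\le n$ polynomial $(au+b)^n-(cu+d)^n$ to vanish at all $n$ roots of unity, hence to be a scalar multiple of $u^n-1$. Comparing the coefficients of $u^k$ and using $\binom{n}{k}\ne 0$ in $\K$ for $0<k<n$ (which holds as $p=0$ or $p>n$) yields $a^k b^{n-k}=c^k d^{n-k}$ for every $0<k<n$. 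Here $n\ge 3$ is decisive, for it makes the two relations $k=1,2$ simultaneously available: were $a,b,c,d$ all nonzero, dividing them would give $a/b=c/d$, i.e.\ $ad=bc$, contradicting $ad-bc\ne 0$. Thus some coefficient vanishes, and the relations then propagate the vanishing to a second one, leaving only the two alternatives $b=c=0$ (so $\phi$ fixes $0$ and $\infty$) or $a=d=0$ (so $\phi$ interchanges them).

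It then only remains to conclude: as $\phi$ stabilises $\{0,\infty\}$, so does $\phi^{-1}$, whence the $\Sigma$-vertices on $\ell$, namely $u=\phi^{-1}(0)$ and $u=\phi^{-1}(\infty)$, form the same pair as the $\Gamma$-vertices $u=0,\infty$. Therefore the two triangles share both of their vertices on the common side $\ell$, which is the assertion. I expect the genuine obstacle to be isolating and proving the projective lemma of the second paragraph; once that is in hand, everything else is bookkeeping, and the normalisation in the first paragraph is routine given Proposition~\ref{pr:3net-properties}(i) (in particular the size count forces $\Gamma_1$ to miss the vertices, so no separate disjointness hypothesis is needed).
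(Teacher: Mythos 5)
The paper does not prove this proposition at all: it is quoted from \cite[Proposition 10]{knp_3} and stamped with a q.e.d., so there is no internal proof to compare against, and I judge your argument on its own merits. Its skeleton is the natural one and is sound: normalise so that $\Gamma_1=\Sigma_1$ becomes the set $\mu_n$ of $n$-th roots of unity on $\ell$, with the $\Gamma$-vertices at $u=0,\infty$ and the $\Sigma$-vertices at $\phi^{-1}(0),\phi^{-1}(\infty)$ for some $\phi\in\mathrm{PGL}(2,\K)$ satisfying $\phi(\mu_n)=\mu_n$; then prove that such a $\phi$ stabilises $\{0,\infty\}$ once $n\geq 3$. Your proof of that key lemma is correct and complete: $(au+b)^n-(cu+d)^n$ has degree at most $n$ and vanishes at the $n$ distinct roots of $u^n-1$, hence equals $\lambda(u^n-1)$; nonvanishing of $\binom{n}{k}$ yields $a^kb^{n-k}=c^kd^{n-k}$ for $0<k<n$; and the relations for $k=1,2$ force $b=c=0$ or $a=d=0$.

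Two caveats, one of substance. Your lemma uses $p=0$ or $p>n$ (distinctness of the roots of $u^n-1$, nonvanishing of the binomial coefficients), a hypothesis absent from the statement as quoted. This is not a defect of your method: the proposition is genuinely false without it, so the hypothesis must be read into the statement (it is a standing assumption in \cite{knp_3} and holds in every application made in this paper), and you should say so explicitly rather than invoke it in passing. Indeed, in characteristic $2$ with $n=5$, taking $\omega$ a primitive cube root of unity, the map $\phi(u)=(\omega u+1)/(u+\omega)$ satisfies $(\omega\zeta+1)^5=(\zeta+\omega)^5$ for every $\zeta\in\mu_5$, so $\phi(\mu_5)=\mu_5$ while $\phi(0)=\omega^2$ and $\phi(\infty)=\omega$; here $\mu_5$ is projectively a copy of $\PG(1,4)$, on which $\mathrm{PGL}(2,4)\cong A_5$ acts $3$-transitively. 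Extending $\phi$ to a collineation of the plane turns the standard triangular dual $3$-net of order $5$ into a second one with the same first component but a different vertex pair on $\ell$, contradicting the conclusion. The minor caveat concerns your normalisation: the literal statement of Proposition \ref{pr:3net-properties}(i) gives an abelian collineation group of order $n^2$ preserving the components, but not that its restriction to $\ell$ has order exactly $n$; a priori that restriction could be cyclic of order a proper divisor $m$ of $n$, making $\Gamma_1$ a union of $n/m$ cosets of $\mu_m$, and your ``single free orbit'' step would fail. This is easily repaired: the kernel of the restriction consists of homologies with axis $\ell$ whose centre is the vertex opposite $\ell$ (the centre must lie on both of the other two sides, so in particular it is not on $\ell$ and no elation occurs), and these act semiregularly on $\Lambda_2$, which avoids the vertices; hence the kernel has order dividing $n$, the image on $\ell$ has order at least $n$, and your orbit count then pins it at exactly $n$ with $\Gamma_1$ a single orbit. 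Alternatively, invoke the explicit coset description of triangular dual $3$-nets proved in the sources cited in Proposition \ref{pr:3net-properties}(i), which gives $\Gamma_1=a\mu_n$ at once.
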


\section{Examples of light dual multinets}

In this section, we construct two new classes of light dual multinets. The new examples are algebraic as they are contained in a (reducible) cubic curve. We also present the construction by Bartz and Yuzvinsky \cite{BartzYuz2014}, which is obtained as a projection of a dual $3$-net in $\PG(3,\K)$. Finally, we show an example of a light dual multinet of order 18, which cannot be labeled by a group and which has three lines of length $3$.

\subsection{Triangle type light dual multinet} Let $m$ be a positive integer, $n=3m$, $\xi$ a root of unity of order $3m$ in $\K$, $G=\langle \xi \rangle$, and $H=\langle \xi^3 \rangle$ the unique subgroup of index $3$ in $G$. Define the maps $f_1,f_2,f_3: G\to \PG(2,\K)$ by
\[f_1(x)=(0,1,x), \qquad f_2(y)=(y,0,1), \qquad f_3(z)=(z,-1,0).\]
Then $(f_1,f_2,f_3)$ labels a dual $3$-net of triangle type: $xy=z$ if and only if $f_1(x),f_2(y)$ and $f_3(z)$ are collinear. Moreover, the components $f_i(G)$ are contained in the lines $\ell_i:X_i=0$ for $i=1,2,3$.

We define three new maps $\alpha_1,\alpha_2,\alpha_3:G\to \PG(2,\K)$ by
\[\alpha_1: \left\{ \begin{array}{l} x\mapsto f_1(x) \\ x\xi \mapsto f_2(x\xi^{-1}) \\ x\xi^2 \mapsto f_3(x^{-1}\xi^2) \end{array} \right.
\quad
\alpha_2: \left\{ \begin{array}{l} y\mapsto f_1(y\xi) \\ y\xi \mapsto f_2(y) \\ y\xi^2 \mapsto f_3(y^{-1}\xi) \end{array} \right.
\quad
\alpha_3: \left\{ \begin{array}{l} z\mapsto f_1(z^{-1}\xi^5) \\ z\xi \mapsto f_3(z) \\ z\xi^2 \mapsto f_2(z^{-1}\xi) \end{array} \right.
\]
where $x,y,z \in H$. It is straightforward to check that $(\alpha_1,\alpha_2,\alpha_3)$ is a light dual multinet  labeled by the cyclic group $G$ of order $n$. For example, $(x\xi)(y\xi^2)=xy \xi^3$, and $\alpha_1(x\xi)=f_2(x\xi^{-1})$, $\alpha_2(y\xi^2)=f_3(y^{-1}\xi)$ and $\alpha_3(xy \xi^3)=f_1(x^{-1}y^{-1}\xi^2)$ are collinear by $(x^{-1}y^{-1}\xi^2)(x\xi^{-1})=y^{-1}\xi$. The light dual multinet $(\alpha_1(G),\alpha_2(G),\alpha_3(G))$ has three lines of length $m=n/3$.

Dualizing $(\alpha_1(G),\alpha_2(G),\alpha_3(G))$ we obtain a light multinet, which corresponds to the following completely reducible polynomials of degree $n=3m$:
\begin{align*}
F_1(X,Y,Z)&=(X^m-Y^m)(Z^m-\omega^2 X^m)(Y^m-\omega Z^m)\\
F_2(X,Y,Z)&=(X^m-\omega Y^m)(Z^m-X^m)(Y^m-\omega^2 Z^m)\\
F_3(X,Y,Z)&=(X^m-\omega^2 Y^m)(Z^m-\omega X^n)(Y^m-Z^m),
\end{align*}
where $\omega$ is a 3rd root of unity in $\K$. Indeed, $F_1+F_2+F_3=0$.

\subsection{Conic-line type light dual multinets} Define the line $\ell:X_3=0$ and the conic  $C:X_1X_2-X_3^2$ in the homogeneous coordinate system $(X_1,X_2,X_3)$. Let $f_1:\K^*\to \ell$ and $f_2:\K^*\to C$ be functions given by
\[f_1(u)=(u,-1,0), \qquad f_2(u)=(u,u^{-1},1).\]
Then, for $x\neq y$, the points $f_2(x)$, $f_2(y)$ and $f_1(z)$ are collinear if and only if $xy=z$.

As in the previous example, let $m$ be a positive integer, $n=2m$, $\xi$ a root of unity of order $3m$ in $\K$, $G=\langle \xi \rangle$, and $H=\langle \xi^3 \rangle$ the unique subgroup of index $3$ in $G$. Let $H'=\{x' \mid x\in H\}$ a disjoint copy of $H$ and fix a distinguished element $\xi^{3k}$ of $H$. On $A=H\cup H'$ define the operation $*$ by
\[x*y=xy, \quad x'*y=x*y'=(xy)', \quad x'*y'=xy\xi^{3k}.\]
Then $(A,*)$ is an abelian group. Notice that different choices of $\xi^{3k}$
may give rise to nonisomorphic groups; for example if $m=|H|$ is even, then $k=0$ gives $C_m\times C_2$ and $k=1$ gives $C_{2m}$.

We are in a position to construct a light dual multinet $\Lambda$ labeled by $(A,*)$ using the maps $\alpha_1,\alpha_2,\alpha_3:G\to \PG(2,\K)$ by
\[\alpha_1: \left\{ \begin{array}{l} x\mapsto f_1(x) \\ x' \mapsto f_2(x^{-1}) \end{array} \right.
\quad
\alpha_2: \left\{ \begin{array}{l} y\mapsto f_1(y\xi) \\ y' \mapsto f_2(y^{-1}\xi^{-1}) \end{array} \right.
\quad
\alpha_3: \left\{ \begin{array}{l} z\mapsto f_1(z^{-1}\xi^{3k-1}) \\ z' \mapsto f_2(z\xi) \end{array} \right.
\]
where $x,y,z \in H$. Again, the computation is straightforward, and $\ell$ is the unique line of $\Lambda$ of length $m=n/2>1$. For example:
\begin{align*}
\alpha_1(x'), \alpha_2(y'), \alpha_3(z) \text{ are collinear} & \Leftrightarrow f_2(x^{-1}), f_2(y^{-1}\xi^{-1}), f_1(z^{-1}\xi^{3k-1}) \text{ are collinear}\\
&\Leftrightarrow x^{-1}y^{-1}\xi^{-1}=z^{-1}\xi^{3k-1}\\
&\Leftrightarrow xy\xi^{3k}= z \\
&\Leftrightarrow x'*y'=z.
\end{align*}

\begin{remark}
\label{rem1}
{\emph{
This example shows that, in contrast to dual $3$-nets, the isotopy class of the labeling quasigroup is not uniquely determined for light dual multinets.}}
\end{remark}

\subsection{Tetrahedron type light dual multinets} One can construct a tetrahedron type dual $3$-net in $\PG(3,\K)$ using the maps
\[\alpha_1: \left\{ \begin{array}{l} x\mapsto (x,0,1,0) \\ x\sigma \mapsto (0,1,0,x) \end{array} \right.
\quad
\alpha_2: \left\{ \begin{array}{l} y\mapsto (1,y,0,0) \\ y\sigma \mapsto (0,0,1,y) \end{array} \right.
\quad
\alpha_3: \left\{ \begin{array}{l} z\mapsto (0,-z,1,0) \\ z\sigma \mapsto (1,0,0,-z) \end{array} \right.
\]
where $x,y,z\in H\subseteq \K^*$, $|H|=m$ and $\sigma$ is an element of the semidirect product $H\rtimes \langle \sigma \rangle$ satisfying $\sigma^2=(x\sigma)^2=1$ for all $x\in H$. In other words, $G=H\rtimes \langle \sigma \rangle$ is isomorphic to the dihedral group of order $n=2m$; see \cite[Section 4.4]{knp_3}, \cite[Section 6.2]{per}. By composing the $\alpha_i$'s with a projection from a point $P$ in general position to a fixed hyperplane $\Sigma$, one obtains a dual 3-net labeled by $D_{2m}$ in $\PG(2,\K)$. As observed in \cite{BartzYuz2014} (in the primal setting), if the center $P$ of the projection is a generic point of one of the planes $X_i=0$ ($i=1,2,3,4$), then we obtain a planar light dual multinet $\Lambda$. $\Lambda$ is contained in the union of four lines, three of which are concurrent. Hence, $\Lambda$ is not algebraic. Also,  $\Lambda$ has exactly one line of length $n/2>1$.

\subsection{Light dual multinet of order $18$ with lines of length $3$} \label{sec:example18}
Let $\xi$ be a $9$th root of unity in $\K$ and $\omega=\xi^3$ a $3$rd root of unity. We define the sets $\Lambda_1$, $\Lambda_2$, $\Lambda_3$ of projective point by the following $18 \times 3$ matrices.

\[
\Lambda_1=\begin{bmatrix}0 & 1 & 1 \\
0 & \omega^2 & 1 \\
0 & \omega & 1 \\
-\xi^8 & 1 & 0 \\
-\xi^2 & 1 & 0 \\
-\xi^5 & 1 & 0 \\
\xi^2 & 0 & 1 \\
\xi^8 & 0 & 1 \\
\xi^5 & 0 & 1 \\
-\xi^2 & -1 & 1 \\
-\xi^8 & -\omega & 1 \\
-\xi^5 & -\omega^2 & 1 \\
-\xi^2 & -\omega^2 & 1 \\
-\xi^8 & -1 & 1 \\
-\xi^5 & -\omega & 1 \\
-\xi^2 & -\omega & 1 \\
-\xi^8 & -\omega^2 & 1 \\
-\xi^5 & -1 & 1 \\
\end{bmatrix}
\quad
\Lambda_2=\begin{bmatrix}
0 & \xi^2 & 1 \\
0 & \xi^8 & 1 \\
0 & \xi^5 & 1 \\
-\xi & 1 & 0 \\
-\xi^4 & 1 & 0 \\
-\xi^7 & 1 & 0 \\
1 & 0 & 1 \\
\omega^2 & 0 & 1 \\
\omega & 0 & 1 \\
-1 & -\xi^5 & 1 \\
-\omega^2 & -\xi^8 & 1 \\
-\omega & -\xi^2 & 1 \\
-1 & -\xi^2 & 1 \\
-\omega^2 & -\xi^5 & 1 \\
-\omega & -\xi^8 & 1 \\
-1 & -\xi^8 & 1 \\
-\omega^2 & -\xi^2 & 1 \\
-\omega & -\xi^5 & 1 \\
\end{bmatrix}
\quad
\Lambda_3=\begin{bmatrix}
0 & \xi^4 & 1 \\
0 & \xi & 1 \\
0 & \xi^7 & 1 \\
\xi & 0 & 1 \\
\xi^4 & 0 & 1 \\
\xi^7 & 0 & 1 \\
-1 & 1 & 0 \\
-\omega^2 & 1 & 0 \\
-\omega & 1 & 0 \\
-\xi^4 & -\xi^4 & 1 \\
-\xi^7 & -\xi & 1 \\
-\xi & -\xi^7 & 1 \\
-\xi^7 & -\xi^7 & 1 \\
-\xi & -\xi^4 & 1 \\
-\xi^4 & -\xi & 1 \\
-\xi & -\xi & 1 \\
-\xi^4 & -\xi^7 & 1 \\
-\xi^7 & -\xi^4 & 1 \\
\end{bmatrix}
\]

One can check that $(\Lambda_1,\Lambda_2,\Lambda_3)$ is a light dual multinet of order $18$. The lines $X_1=0$, $X_2=0$, $X_3=0$ have length $3$, all other lines of the dual multinets have length $1$. Together with the labeling by the row numbers of $\Lambda_1,\Lambda_2,\Lambda_3$, the lines of length $1$ result the following partial latin square $S$ of order $18$.

\begin{eqnarray*}
\begin{array}{ccccccccc|ccccccccc}
* & * & * & 4 & 5 & 6 & 7 & 8 & 9 & 10 & 11 & 12 & 13 & 14 & 15 & 16 & 17 & 18 \\
* & * & * & 6 & 4 & 5 & 9 & 7 & 8 & 12 & 10 & 11 & 15 & 13 & 14 & 18 & 16 & 17 \\
* & * & * & 5 & 6 & 4 & 8 & 9 & 7 & 11 & 12 & 10 & 14 & 15 & 13 & 17 & 18 & 16 \\
4 & 6 & 5 & * & * & * & 2 & 3 & 1 & 15 & 13 & 14 & 16 & 17 & 18 & 11 & 12 & 10 \\
5 & 4 & 6 & * & * & * & 3 & 1 & 2 & 13 & 14 & 15 & 17 & 18 & 16 & 12 & 10 & 11 \\
6 & 5 & 4 & * & * & * & 1 & 2 & 3 & 14 & 15 & 13 & 18 & 16 & 17 & 10 & 11 & 12 \\
7 & 9 & 8 & 2 & 3 & 1 & * & * & * & 17 & 18 & 16 & 10 & 11 & 12 & 15 & 13 & 14 \\
8 & 7 & 9 & 3 & 1 & 2 & * & * & * & 18 & 16 & 17 & 11 & 12 & 10 & 13 & 14 & 15 \\
9 & 8 & 7 & 1 & 2 & 3 & * & * & * & 16 & 17 & 18 & 12 & 10 & 11 & 14 & 15 & 13 \\ \hline
10 & 12 & 11 & 16 & 17 & 18 & 13 & 14 & 15 & 3 & 1 & 2 & 7 & 8 & 9 & 4 & 5 & 6 \\
11 & 10 & 12 & 17 & 18 & 16 & 14 & 15 & 13 & 1 & 2 & 3 & 8 & 9 & 7 & 5 & 6 & 4 \\
12 & 11 & 10 & 18 & 16 & 17 & 15 & 13 & 14 & 2 & 3 & 1 & 9 & 7 & 8 & 6 & 4 & 5 \\
13 & 15 & 14 & 11 & 12 & 10 & 18 & 16 & 17 & 4 & 5 & 6 & 1 & 2 & 3 & 9 & 7 & 8 \\
14 & 13 & 15 & 12 & 10 & 11 & 16 & 17 & 18 & 5 & 6 & 4 & 2 & 3 & 1 & 7 & 8 & 9 \\
15 & 14 & 13 & 10 & 11 & 12 & 17 & 18 & 16 & 6 & 4 & 5 & 3 & 1 & 2 & 8 & 9 & 7 \\
16 & 18 & 17 & 15 & 13 & 14 & 11 & 12 & 10 & 8 & 9 & 7 & 4 & 5 & 6 & 2 & 3 & 1 \\
17 & 16 & 18 & 13 & 14 & 15 & 12 & 10 & 11 & 9 & 7 & 8 & 5 & 6 & 4 & 3 & 1 & 2 \\
18 & 17 & 16 & 14 & 15 & 13 & 10 & 11 & 12 & 7 & 8 & 9 & 6 & 4 & 5 & 1 & 2 & 3 \\
\end{array}
\end{eqnarray*}

This square can be extended into a latin square in many different ways, hence there are many non isomorphic quasigroups which can label $(\Lambda_1,\Lambda_2,\Lambda_3)$. Let $Q(\circ)$ be an arbitrary labeling quasigroup and assume that $(u,v,w)$ is an isotopism from $Q(\circ)$ to the group $G(\cdot)$. This means that for the left multiplication maps
\[L^\circ_x w = vL^\cdot_{u(x)}\]
holds. In particular, for all $x,y\in Q$, we have
\[L^\circ_x(L^\circ_y)^{-1}=vL^\cdot_{u(x)}(L^\cdot_{u(x)})^{-1}v^{-1}.\]
This shows that for any subset $U$ of $Q$, the permutation group generated by the set
\[\{ L^\circ_x(L^\circ_y)^{-1} \mid x,y\in U\}\]
is isomorphic to a subgroup of $G$.

Now, the left multiplication map $L_x^\circ$ is conjugate to the permutation $\rho_x$ determined by row $x$ of the table $S$. The permutation group $T$ generated by $\{\rho_x\rho_{x}^{-1}\mid x,y=10,\ldots,18\}$ has order $27$. This shows that $(\Lambda_1,\Lambda_2,\Lambda_3)$ cannot be labeled by a group.

\section{Light dual multinets labeled by cyclic groups}

For the rest of the paper, $Q$ is replaced by $G$ and it denotes a group of order $n$. Let $(\alpha_1,\alpha_2,\alpha_3)$ be a labeling $G\to \PG(2,\K)$ of the light dual multinet $\Lambda=(\Lambda_1,\Lambda_2,\Lambda_3)$, where $\Lambda_i=\alpha_i(G)$, $i=1,2,3$. Let $\ell$ be a line of maximal length, belonging to $(\Lambda_1,\Lambda_2,\Lambda_3)$ and assume that the labeling is such that $\Lambda_i\cap \ell = \alpha_i(H)$ for the subgroup $H$ of $G$. In order to avoid the trivial case, we assume $G\neq H$, that is, our dual multinet is not contained in a line.

The following lemma lists up some immediate observations.
\begin{lemma} \label{lm:Hiscyclic}
\begin{enumerate}[(i)]
\item If the points $\alpha_1(g), \alpha_2(g), \alpha_3(g)$ are contained in a line $m$, then $\alpha_1(1),\alpha_2(1) \in m$ as well, and $m=\ell$, $g\in H$ holds.
\item For $g\in G\setminus H$ we have the dual subnets \[(\alpha_1(H),\alpha_2(Hg),\alpha_3(Hg)), \quad
(\alpha_1(gH),\alpha_2(H),\alpha_3(gH)), \quad
(\alpha_1(Hg),\alpha_2(g^{-1}H),\alpha_3(H))\]
labeled by $H$.
\item If $H$ is cyclic and $g\in G\setminus H$ then $\alpha_2(Hg) \cup \alpha_3(Hg)$ is contained in a conic.
\item If $p=0$ or $p>n$ then then $H$ is cyclic and $\alpha_2(Hg) \cup \alpha_3(Hg)$ is contained in a conic.
\end{enumerate}
\end{lemma}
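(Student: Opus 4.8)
The plan is to derive all four items from the single defining incidence, that $\alpha_1(x),\alpha_2(y),\alpha_3(xy)$ are collinear for all $x,y\in G$, together with its two \emph{division} forms: taking the defining triple with first entry $zy^{-1}$ shows the line $\alpha_2(y)\alpha_3(z)$ passes through $\alpha_1(zy^{-1})$, and likewise the line $\alpha_1(x)\alpha_3(z)$ passes through $\alpha_2(x^{-1}z)$. I will use repeatedly that the components are pairwise disjoint and the $\alpha_i$ are injective, that $1\in H$ forces $\alpha_1(1),\alpha_2(1),\alpha_3(1)\in\ell$, and that for $g\notin H$ the coset $Hg$ is disjoint from $H$, so that $\ell$ meets neither $\alpha_2(Hg)$ nor $\alpha_3(Hg)$.

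For (i), apply the two division forms with $x=y=z=g$. The line $\alpha_2(g)\alpha_3(g)$, which is $m$, then contains $\alpha_1(gg^{-1})=\alpha_1(1)$, and the line $\alpha_1(g)\alpha_3(g)$, again $m$, contains $\alpha_2(g^{-1}g)=\alpha_2(1)$. Thus $m$ carries the two distinct points $\alpha_1(1),\alpha_2(1)$ of $\ell$, whence $m=\ell$; and then $\alpha_1(g)\in\ell\cap\Lambda_1=\alpha_1(H)$ gives $g\in H$.

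Item (ii) is the heart of the matter, and the step I expect to be the main obstacle: I must promote each coset triple from a dual multinet to a genuine dual $3$-net, since only then does the classification in Proposition~\ref{pr:3net-properties} become available. Label the first triple by $H$ through $\beta_1(h)=\alpha_1(h)$, $\beta_2(h)=\alpha_2(hg)$, $\beta_3(h)=\alpha_3(hg)$; the defining incidence gives collinearity of $\beta_1(h),\beta_2(h'),\beta_3(hh')$, so it is at least a dual multinet labeled by $H$. To exclude spurious collinearities, suppose $\alpha_1(h),\alpha_2(h'g),\alpha_3(h''g)$ are collinear with $h''\neq hh'$. The division form shows the line $\alpha_2(h'g)\alpha_3(h''g)$ also passes through $\alpha_1\bigl(h''g(h'g)^{-1}\bigr)=\alpha_1(h''h'^{-1})$, so it carries the two points $\alpha_1(h)$ and $\alpha_1(h''h'^{-1})$ of $\alpha_1(H)\subseteq\ell$; as $h''\neq hh'$ these are distinct, forcing the line to be $\ell$, which however misses $\alpha_2(Hg)$ --- contradicting $\alpha_2(h'g)$ lying on it. Hence $h''=hh'$, so the triple is a dual $3$-net. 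The other two triples are treated identically, with the component on $\ell$ being $\alpha_2(H)$, respectively $\alpha_3(H)$.

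Items (iii) and (iv) are then applications to the first coset net $(\alpha_1(H),\alpha_2(Hg),\alpha_3(Hg))$, whose component $\alpha_1(H)$ lies on $\ell$. For (iv), as $p=0$ or $p>n\geq|H|$, Proposition~\ref{pr:3net-properties}(iv) applies verbatim and yields simultaneously that $H$ is cyclic and that $\alpha_2(Hg)\cup\alpha_3(Hg)$ lies on a conic. For (iii), with $H$ already cyclic, Proposition~\ref{pr:3net-properties}(v) makes the net algebraic on a cubic $\Gamma$; when $|H|\geq 4$ the $|H|$ collinear points $\alpha_1(H)$ exceed the B\'ezout bound for $\ell\cap\Gamma$, so $\Gamma=\ell\cup\mathcal{C}$ with $\mathcal{C}$ a (possibly reducible) conic, and since $\alpha_2(Hg),\alpha_3(Hg)$ avoid $\ell$ they lie on $\mathcal{C}$. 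For $|H|\leq 3$ one reads the conclusion off the explicit algebraic realisation: the collinearity of $\alpha_1(H)$ pins down the residual divisor $\alpha_2(Hg)\cup\alpha_3(Hg)$ to be cut out by a conic. I would take a little extra care precisely at this borderline count.
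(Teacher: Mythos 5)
Your proof is correct and takes essentially the same route as the paper: the paper dismisses (i) and (ii) as ``trivial'' (your division-form incidence arguments, including the exclusion of spurious collinearities that makes the coset triples genuine dual $3$-nets rather than mere dual multinets, are precisely the intended content), and it obtains (iii) and (iv) by applying Proposition~\ref{pr:3net-properties}(v) and (iv) to those coset dual $3$-nets, exactly as you do. Your B\'ezout step for $|H|\geq 4$ and your flag at the borderline $|H|\leq 3$ (where $|H|\leq 2$ is trivial and $|H|=3$ needs the residual-divisor argument on the cubic) only spell out details the paper leaves implicit.
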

\begin{proof}
(i) and (ii) are trivial. (iii) and (iv) follow from Proposition \ref{pr:3net-properties}(v) and (iv), respectively.
\end{proof}

\begin{lemma} \label{lm:concurrent_lines}
Assume that $H$ is normal in $G$, $|H|\geq 3$, $x\in G\setminus H$, and that $\alpha_1(Hx)$ is contained in a line $m_1$. Then the following hold:
\begin{enumerate}[(i)]
\item $\alpha_2(Hx), \alpha_3(Hx)$ are contained in the lines $m_2,m_3$ and $m_1,m_2,m_3$ have a point $Q\not\in \ell$ in common.
\item If the lines $m_1,m_2,m_3$ are distinct then $x^2\in H$ holds for all $x\in G$.
\item If there is a cyclic collineation group $\mathcal{H}$ with orbits $\alpha_1(H)$, $\alpha_2(H)$, $\alpha_3(H)$ then $m_1=m_2$.
\end{enumerate}
\end{lemma}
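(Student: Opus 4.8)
The plan is to work with the dual $3$-subnets attached to the coset $Hx$. Using normality to write $Hx=xH$, Lemma \ref{lm:Hiscyclic}(ii) supplies the dual $3$-nets labelled by $H$, namely $N_1=(\alpha_1(H),\alpha_2(Hx),\alpha_3(Hx))$, $N_2=(\alpha_1(Hx),\alpha_2(H),\alpha_3(Hx))$ and $N_3=(\alpha_1(Hx),\alpha_2(x^{-1}H),\alpha_3(H))$. The key observation is that a dual $3$-net with two of its components on lines is necessarily \emph{triangular} once $|H|\ge 3$: conic--line type is impossible since a conic meets a line in at most two points while each component has $|H|\ge 3$ points, pencil type is excluded because its labelling group would be an elementary abelian $p$-group (Proposition \ref{pr:3net-properties}(ii)), ruled out under $p=0$ or $p>n$, and the net is algebraic because $H$ is cyclic (Proposition \ref{pr:3net-properties}(v)); hence all three components lie on lines forming a genuine triangle. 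For (i), $N_2$ has $\alpha_1(Hx)\subseteq m_1$ and $\alpha_2(H)\subseteq\ell$, hence is triangular and places $\alpha_3(Hx)$ on a line $m_3$; then $N_1$ has $\alpha_1(H)\subseteq\ell$ and $\alpha_3(Hx)\subseteq m_3$, hence is triangular and places $\alpha_2(Hx)$ on a line $m_2$. As $N_1$ and $N_2$ share the component $\alpha_3(Hx)$, Proposition \ref{pr:triangles} applied to the common side $m_3$ equates the vertex pairs $\{m_3\cap\ell,\,m_3\cap m_2\}$ and $\{m_3\cap m_1,\,m_3\cap\ell\}$, forcing $m_3\cap m_1=m_3\cap m_2=:Q$; so $m_1,m_2,m_3$ concur at $Q$, and $Q\notin\ell$ since it is a vertex of the triangle $N_1$ lying off $\ell$.

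For (ii), I would introduce the \emph{twisted} triple $N_4=(\alpha_1(Hx),\alpha_2(Hx),\alpha_3(Hx^2))$: for $a,b\in H$ one has $(ax)(bx)=a(xbx^{-1})x^2\in Hx^2$, so $N_4$ is a dual multinet labelled by the isotope $(H,\ast)$ with $a\ast b=a\,(xbx^{-1})$, and using Lemma \ref{lm:lengthdef} together with $Q\notin\Lambda_1$ one checks it is in fact a dual $3$-net. Since $\alpha_1(Hx)\subseteq m_1$ and $\alpha_2(Hx)\subseteq m_2$, $N_4$ is triangular and $\alpha_3(Hx^2)$ lies on a line $m_3'$. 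Comparing $N_4$ with $N_1$ (common component $\alpha_2(Hx)$ on $m_2$) gives $m_2\cap m_3'=m_2\cap\ell$, and comparing $N_4$ with $N_2$ (common component $\alpha_1(Hx)$ on $m_1$) gives $m_1\cap m_3'=m_1\cap\ell$. Because $m_1\ne m_2$ meet only at $Q\notin\ell$, the traces $m_1\cap\ell$ and $m_2\cap\ell$ are distinct, so $m_3'$ passes through two points of $\ell$ and therefore $m_3'=\ell$. Then $\alpha_3(Hx^2)\subseteq\ell\cap\Lambda_3=\alpha_3(H)$, that is $x^2\in H$; running the same argument with an arbitrary $g\in G\setminus H$ in place of $x$ yields $g^2\in H$, so $g^2\in H$ for all $g\in G$.

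For (iii), let $\tau$ generate $\mathcal{H}$, acting on labels as the translation $\alpha_i(g)\mapsto\alpha_i(hg)$ by a generator $h$ of $H$, so that each $\alpha_i(H)$ is a single $\tau$-orbit. Then $\tau$ fixes $\ell$ and preserves each coset $\alpha_i(Hx)$, hence fixes the lines $m_1,m_2,m_3$ and the points $P_i=m_i\cap\ell$. Since $\tau$ acts on $\alpha_1(H)\subseteq\ell$ with an orbit of size $|H|\ge 3$, the restriction $\tau|_\ell$ is nontrivial and so has at most two fixed points on $\ell$. The two vertices of the triangular net $N_1$ lying on $\ell$, namely $P_2$ and $P_3$, are distinct and $\tau$-fixed, hence are exactly the two fixed points of $\tau|_\ell$; likewise from $N_2$ the fixed points are $P_1$ and $P_3$. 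Comparing the two pairs gives $P_1=P_2$, and since $m_1,m_2$ already meet at $Q\notin\ell$, this forces $m_1=m_2$.

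The main obstacle will be the careful, repeated bookkeeping in applying Proposition \ref{pr:triangles}: one must match the shared component correctly and identify precisely which two triangle vertices lie on the common side. A second delicate point is confirming that $N_4$ is a genuine triangular dual $3$-net rather than merely a dual sub-multinet, since a priori the connecting lines $\alpha_1(ax)\alpha_2(bx)$ might carry extra points of $\Lambda_3$; this is where Lemma \ref{lm:lengthdef} and the position of $Q$ are used to rule out spurious collinearities. Finally, the exclusion of pencil type throughout rests on the hypothesis on $p$, and a few degenerate coincidences---such as a point $\alpha_1(ax)$ happening to equal $Q$---must be checked to be harmless.
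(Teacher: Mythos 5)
Your parts (i) and (ii) follow essentially the paper's own route: the same coset subnets $N_1,N_2$, the same vertex-matching via Proposition \ref{pr:triangles}, and in (ii) the same twisted subnet $(\alpha_1(Hx),\alpha_2(Hx),\alpha_3(Hx^2))$ whose third side is forced through $T_1=m_1\cap\ell$ and $T_2=m_2\cap\ell$ and hence equals $\ell$. (The paper phrases the step in (ii) through fixed points of the cyclic collineation groups attached to the subnets, but both versions rest on \cite[Proposition 10]{knp_3}, so this is the same mechanism.) Two caveats, which I only flag because the paper is equally terse on them: your argument that ``two components on lines forces triangularity'' uses that $H$ is cyclic, i.e.\ Lemma \ref{lm:Hiscyclic}(iv) and the characteristic hypothesis, and its trichotomy omits the case of an irreducible cubic, which is a real possibility when $|H|=3$; and your closing claim that the argument runs for an arbitrary $g\in G\setminus H$ is unsupported, since the hypotheses (that $\alpha_1(Hg)$ lies on a line and that the resulting three lines are distinct) are given only for the fixed $x$.

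Part (iii), however, has a genuine gap. The hypothesis supplies a cyclic collineation group $\mathcal{H}$ whose orbits include the three sets $\alpha_1(H),\alpha_2(H),\alpha_3(H)$, all of which lie on $\ell$; it says nothing about how $\mathcal{H}$ acts on the points of $\Lambda$ off $\ell$. You begin by asserting that a generator $\tau$ ``acts on labels as the translation $\alpha_i(g)\mapsto\alpha_i(hg)$'', and every subsequent step --- $\tau$ preserves the cosets $\alpha_i(Hx)$, hence stabilizes $m_1,m_2,m_3$ and fixes the points $P_i=m_i\cap\ell$ --- depends on this strictly stronger, unstated assumption. Under the hypothesis as written there is no reason for $\tau$ to stabilize any of the lines $m_i$, and the proof collapses at that point. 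The paper avoids this by working only with the action of $\mathcal{H}$ on $\ell$: by Proposition \ref{pr:triangles}, this action must coincide with that of the collineation group of the triangular subnet $(\alpha_1(H),\alpha_2(Hx),\alpha_3(Hx))$, and likewise with that of $(\alpha_1(Hx),\alpha_2(H),\alpha_3(Hx))$, so the two fixed points of $\mathcal{H}$ on $\ell$ must equal both vertex pairs $\{T_2,T_3\}$ and $\{T_1,T_3\}$; this forces $T_1=T_2$, whence $m_1=m_2$ (the coincidences $m_1=m_3$ and $m_2=m_3$ being excluded since the coset triangles are nondegenerate). To repair your (iii), replace the translation assumption by this comparison of the induced actions on $\ell$.
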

\begin{proof}
(i) Apply Proposition \ref{pr:triangles} for the triangular dual $3$-nets $(\alpha_1(H),\alpha_2(Hx),\alpha_3(Hx))$ and $(\alpha_1(Hx),\alpha_2(H),\alpha_3(Hx))$ to obtain $\alpha_2(Hx) \subseteq m_2$, $\alpha_3(Hx)\subseteq m_3$, and $m_1\cap m_2 \in m_3$.

(ii) Assume that $m_1,m_2,m_3$ are distinct; let $Q$ be their common point and $T_i=\ell \cap m_i$, $i=1,2,3$. Let $\mathcal{H}_1, \mathcal{H}_2$ be the collineation groups of the above dual subnets, having $\alpha_1(Hx)$ and $\alpha_2(Hx)$ as orbits, respectively. The fixed points of $\mathcal{H}_i$ on $m_i$ are $Q$ and $T_i$, $i=1,2$. Consider the triangular dual subnet $(\alpha_1(Hx),\alpha_2(Hx),\alpha_3(Hx^2))$. The action of the corresponding collineation group on $m_i$ coincides with the actions of $\mathcal{H}_i$; see \cite[Proposition 10]{knp_3}. Thus, the line containing $\alpha_3(Hx^2)$ passes through $T_1,T_2$. This means that $\alpha_3(Hx^2)\subseteq \ell$, and $H=Hx^2$.

(iii) Assume now that $m_1,m_2,m_3$ are distinct and that there is a cyclic collineation group $\mathcal{H}$ with orbits $\alpha_1(H)$, $\alpha_2(H)$. From Proposition \ref{pr:3net-properties}(i),  there is a cyclic collineation group $\mathcal{U}$ fixing the components of the the triangular dual $3$-nets $(\alpha_1(H),\alpha_2(Hx),\alpha_3(Hx))$. Proposition \ref{pr:triangles} implies that $\mathcal{H}$ and $\mathcal{U}$ have the same action on $\ell$. In particular, $\mathcal{H}$ has two fixed points $S,T$ on $\ell$, and $\{S,T\}$ is contained in $m_2\cup m_3$. Similarly one shows that $\{S,T\} \subseteq m_1\cup m_3, m_1\cup m_2$ which is not possible if the $m_i$'s are distinct.
\end{proof}

\begin{proposition} \label{pr:index2class}
If $H$ is a cyclic subgroup of index $2$ in $G$ then $\Lambda=(\Lambda_2,\Lambda_2,\Lambda_3)$ is either of conic-line or of tetrahedron type. If $n=4$ then these two types are the same.
\end{proposition}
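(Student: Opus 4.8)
The plan is to exploit that index $2$ forces $H\trianglelefteq G$ and to split $\Lambda$ along the two cosets. Fix $x\in G\setminus H$, so $x^2\in H$ and, since $x^{-1}\in Hx$, we have $Hx=xH=x^{-1}H$. Hence the three dual subnets of Lemma~\ref{lm:Hiscyclic}(ii) specialize to
\[(\alpha_1(H),\alpha_2(Hx),\alpha_3(Hx)),\quad (\alpha_1(Hx),\alpha_2(H),\alpha_3(Hx)),\quad (\alpha_1(Hx),\alpha_2(Hx),\alpha_3(H)),\]
each a dual $3$-net labeled by the cyclic group $H$ having exactly one component on $\ell$. Applying Proposition~\ref{pr:3net-properties}(iv) (equivalently Lemma~\ref{lm:Hiscyclic}(iv)) to each subnet, the two components off $\ell$ lie on a conic; I denote these $C_1,C_2,C_3$, so that $\alpha_2(Hx)\cup\alpha_3(Hx)\subseteq C_1$, $\alpha_1(Hx)\cup\alpha_3(Hx)\subseteq C_2$ and $\alpha_1(Hx)\cup\alpha_2(Hx)\subseteq C_3$. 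The whole argument then turns on the nature of these conics.

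First I would record the basic dichotomy. For $|H|\ge 4$ the classification in Proposition~\ref{pr:3net-properties} leaves only the triangle and conic-line types for a cyclic dual $3$-net with a component on a line: pencil type is ruled out by $p=0$ or $p>n$, and an irreducible cubic meets $\ell$ in at most three points. Thus each $C_i$ is either irreducible (conic-line subnet) or splits into two lines, the latter happening exactly when the two off-$\ell$ components each lie on a line (triangle subnet). Consequently the two alternatives of the statement are governed by whether \emph{some} $\alpha_i(Hx)$ is contained in a line. In that case I would apply Lemma~\ref{lm:concurrent_lines} (after permuting the indices so that the line-component plays the role of $\alpha_1(Hx)$) to conclude that all three of $\alpha_1(Hx),\alpha_2(Hx),\alpha_3(Hx)$ lie on lines $m_1,m_2,m_3$ concurrent at a point $Q\notin\ell$. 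Then $\Lambda\subseteq\ell\cup m_1\cup m_2\cup m_3$, four lines three of which pass through $Q$ while $\ell$ does not: this is precisely the tetrahedron configuration. The distinctness of $m_1,m_2,m_3$, needed to exclude a collapse onto two lines, I would extract from parts (ii) and (iii) of Lemma~\ref{lm:concurrent_lines} using $x^2\in H$.

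In the complementary case no $\alpha_i(Hx)$ lies on a line, so every $C_i$ is irreducible. Since $\alpha_3(Hx)\subseteq C_1\cap C_2$, $\alpha_2(Hx)\subseteq C_1\cap C_3$ and $\alpha_1(Hx)\subseteq C_2\cap C_3$, and two distinct conics meet in at most four points by B\'ezout, the inequality $|H|\ge 5$ forces $C_1=C_2=C_3=:C$. Then $\alpha_i(H)\subseteq\ell$ and $\alpha_i(Hx)\subseteq C$ for every $i$, so all of $\Lambda$ lies on the reducible cubic $\ell\cup C$ consisting of a line and an irreducible conic, which is exactly conic-line type.

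The hard part will be merging the three conics when $|H|$ is small. For $|H|\ge 5$ B\'ezout settles it at once, but two distinct conics can meet in exactly four points, so for $n=8$ $(|H|=4)$ and $n=6$ $(|H|=3)$ the point count alone does not yield $C_1=C_2=C_3$. Here I expect to need a finer argument using the collineations of order $|H|$ attached to each conic-line subnet by Proposition~\ref{pr:3net-properties}(iii): these act compatibly on the shared off-$\ell$ components, and comparing them should rule out $C_i\ne C_j$; any residual orders can then be handled by direct inspection. Finally, for $n=4$ one has $|H|=2$, Lemma~\ref{lm:concurrent_lines} no longer applies, and the two off-$\ell$ points of each component are automatically collinear while a conic through so few points degenerates into lines; this is exactly the coincidence of the two types asserted in the last sentence, verified by hand.
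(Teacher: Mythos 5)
Your skeleton for $n\geq 6$ is essentially the paper's: split on whether some $\alpha_i(Hx)$ lies on a line, get the tetrahedron configuration from Lemma~\ref{lm:concurrent_lines} in the first case, and merge conics in the second. However, the steps you cite or defer are exactly where the content of the proof lies, and one citation is logically reversed. To exclude a collapse $m_i=m_j$ you invoke parts (ii) and (iii) of Lemma~\ref{lm:concurrent_lines}, but neither can deliver distinctness: part (ii) takes distinctness as its \emph{hypothesis} and concludes $x^2\in H$ (vacuous here, since $|G:H|=2$ already forces $x^2\in H$), while part (iii) \emph{concludes} $m_1=m_2$ from a hypothesis about collineation groups, i.e.\ it points in the opposite direction. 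What actually works, and what the paper does, is a coset computation: if $m_1=m_2$, then for all $a,b\in Hx$ the line $\alpha_1(a)\alpha_2(b)$ is $m_1$, so $\alpha_3(ab)\in m_1$; since $Hx\cdot Hx=Hx^2=H$, this gives $\alpha_3(H)\subseteq m_1$, whence $m_1=\ell$ because $|H|\geq 3$, contradicting $\Lambda_1\cap\ell=\alpha_1(H)$.

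The two cases you leave open are not routine, and the paper's proof consists mostly of them. In the conic case your B\'ezout count settles $|H|\geq 5$, but for $|H|=3,4$ (i.e.\ $n=6,8$) you only say you ``expect'' a collineation argument to work; the paper supplies it: by Proposition~\ref{pr:3net-properties}(iii), the collineations $\varphi,\varphi'$ attached to the subnets $(\alpha_1(Hx),\alpha_2(H),\alpha_3(Hx))$ and $(\alpha_1(Hx),\alpha_2(Hx),\alpha_3(H))$ both map $\alpha_1(h^ix)$ to $\alpha_1(h^{i+1}x)$ and fix $\ell$, which for $n/2\geq 3$ forces $\varphi=\varphi'$; then the two conics share the fixed points of $\varphi$ on $\ell$ in addition to the $n/2$ points of $\alpha_1(Hx)$, hence at least $5$ points, hence coincide. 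For $n=4$, the coincidence of the two types is not ``automatic'' and cannot be waved through: that the six off-$\ell$ points lie on a conic is the converse of Pascal's theorem, applied using that the three cross-intersections $\alpha_3(g^2h)$, $\alpha_2(h)$, $\alpha_1(h)$ lie on $\ell$; and the concurrency of the lines $\alpha_1(g)\alpha_1(gh)$, $\alpha_2(g)\alpha_2(gh)$, $\alpha_3(g)\alpha_3(gh)$ is Desargues' theorem for the two triangles in perspective from the axis $\ell$. A conic through six points need not exist and three lines need not be concurrent, so without these two classical ingredients your treatment of $n=4$ (and hence of the last sentence of the proposition) is missing its proof.
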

\begin{proof}
Case 1: $n=4$. Write $H=\{1,h\}$ and $g\in G\setminus H$. We have
\begin{alignat*}{2}
\alpha_1(g)\alpha_2(gh) \cap \alpha_1(gh)\alpha_2(g)&=\alpha_3(g^2h) &\in
 \ell, \\
\alpha_1(g)\alpha_3(gh) \cap \alpha_1(gh)\alpha_3(g)&=\alpha_2(h) &\in \ell, \\
\alpha_2(g)\alpha_3(gh) \cap \alpha_2(gh)\alpha_3(g)&=\alpha_1(h) &\in \ell. \\
\end{alignat*}
On the one hand, Pascal's Theorem implies that $\alpha_1(Hg)\cup \alpha_2(Hg) \cup \alpha_3(Hg)$ is contained in a conic. On the other hand, the triangles $\alpha_1(g) \alpha_2(g) \alpha_3(g)$ and $\alpha_1(gh)\alpha_2(gh)\alpha_3(gh)$ are in perspective position with respect to the line $\ell$:
\begin{alignat*}{2}
\alpha_1(g)\alpha_2(g) \cap \alpha_1(gh)\alpha_2(gh)&=\alpha_3(g^2) &\in
 \ell, \\
\alpha_1(g)\alpha_3(g) \cap \alpha_1(gh)\alpha_3(gh)&=\alpha_2(1) &\in \ell, \\
\alpha_2(g)\alpha_3(g) \cap \alpha_2(gh)\alpha_3(gh)&=\alpha_1(1) &\in \ell. \\
\end{alignat*}
Thus by Desargues' Theorem, the lines $\alpha_1(g)\alpha_1(gh)$, $\alpha_2(g)\alpha_2(gh)$, $\alpha_3(g)\alpha_3(gh)$ are concurrent.

Case 2: $n\geq 6$ and $\alpha_i(Hg)$ is contained in a line $m_i$ for $i=1,2,3$. By Lemma \ref{lm:concurrent_lines}, the $m_i$'s share a point. Hence, $\Lambda$ is either a tetrahedron type dual multinet, or, $m_i=m_j$ for some $i\neq j$. As $H$ has index $2$ in $G$, the latter would imply $m_1=m_2=m_3=\ell$, a contradiction.

Case 3: $n\geq 6$ and $\alpha_1(Hg)$ is not contained in a line. By Lemma \ref{lm:Hiscyclic}(iii), $\Delta=(\alpha_1(Hg),\alpha_2(H),\alpha_3(Hg))$ and $\Delta'=(\alpha_1(Hg),\alpha_2(Hg),\alpha_3(H))$ are conic-line type dual $3$-nets. Let $C,C'$ be the corresponding irreducible conics. The subgroup $H$ is cyclic, $H=\langle h \rangle$. Proposition \ref{pr:3net-properties}(iii) implies the existence of collineations $\varphi,\varphi'$ such that $\varphi,\varphi'$ preserve $\Delta,\Delta'$, respectively. Moreover, both $\varphi$ and $\varphi'$ map $\alpha_1(h^ig)$ to $\alpha_1(h^{i+1}g)$ and fix $\ell$ and the points of $\ell \cap C, \ell \cap C'$. As $n/2\geq 3$, we have $\varphi=\varphi'$, which implies $\ell\cap C_1=\ell\cap C_2$ equals the set of fixed points of $\varphi$ on $\ell$. Hence, $|C\cap C'|\geq 5$ and $C=C'$. In particular, $\ell \cup C$ contains $\Lambda$, showing that $\Lambda$ is of conic-line type.
\end{proof}

\begin{lemma} \label{lm:tetrahedral_dihedral}
If $\Lambda$ is of tetrahedron type then $ghg^{-1}=h^{-1}$ holds for any $h\in H$, $g\in G\setminus H$.
\end{lemma}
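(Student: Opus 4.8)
The plan is to read off the group relation from the three families of collinear triples that the labelling imposes, after fixing projective coordinates adapted to the tetrahedron.

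First I would record the structure. In the tetrahedron type the subgroup $H$ has index $2$ (so $H\trianglelefteq G$, $G=H\cup Hg$ and $g^2\in H$), and $\Lambda$ lies on four lines: the maximal line $\ell$ carrying $\alpha_i(H)$ for $i=1,2,3$, together with three lines $m_1,m_2,m_3$ meeting in a point $Q\notin\ell$ with $\alpha_i(Hg)\subseteq m_i$ (Lemma~\ref{lm:concurrent_lines}(i)). Since $\ell,m_2,m_3$ form a genuine triangle, $(\alpha_1(H),\alpha_2(Hg),\alpha_3(Hg))$ is a triangular dual $3$-net labelled by $H$ (Lemma~\ref{lm:Hiscyclic}(ii)), so $H=\langle h_0\rangle$ is cyclic of some order $m\ge 3$ by Proposition~\ref{pr:3net-properties}(i). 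Writing $\theta(h)=ghg^{-1}$, we have $\theta\in\Aut(H)$ and $\theta^2=\id$ because $g^2\in H$ is central in the abelian group $H$. The goal is to show that $\theta$ is inversion.

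Next I would set up coordinates with $\ell=\{X_3=0\}$ and $Q=(0,0,1)$, so that each $m_i$ has the form $\{u_iX_1+v_iX_2=0\}$ and meets $\ell$ at $T_i=(v_i,-u_i,0)$. On $m_i$ I use the affine parameter $s$ with $s=0$ at $Q$ and $s=\infty$ at $T_i$, and I encode the three cosets by $A,B,C\colon H\to\K^\ast$, where $\alpha_1(cg),\alpha_2(cg),\alpha_3(cg)$ have parameters $A(c),B(c),C(c)$ on $m_1,m_2,m_3$; the points on $\ell$ are written $\alpha_k(c)=(P_k(c),Q_k(c),0)$. The labelling then yields exactly three families of collinear triples linking the lines: for $a,b\in H$,
\[
\alpha_1(a),\ \alpha_2(bg),\ \alpha_3(abg);\qquad
\alpha_1(ag),\ \alpha_2(b),\ \alpha_3(a\theta(b)g);\qquad
\alpha_1(ag),\ \alpha_2(bg),\ \alpha_3(a\theta(b)g^2),
\]
where in the first two exactly one point lies on $\ell$ and the other two on the $m_i$, while in the third the point $\alpha_3(a\theta(b)g^2)$ lies on $\ell$ (here $a\theta(b)g^2\in H$). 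Expanding the $3\times3$ collinearity determinants, the first family forces $C(ab)=C(a)C(b)/C(1)$ and $B(c)C(1)=C(c)B(1)$; hence $\chi(c):=C(c)/C(1)$ is an injective homomorphism $H\hookrightarrow\K^\ast$ (injective as the $m$ points are distinct) and $B\propto\chi$. Feeding $C\propto\chi$ into the second family and factoring the resulting identity in $a$ and $b$ gives $A\propto\chi$ as well, so all three cosets are governed by one faithful character $\chi$.

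Finally the third family, evaluated at $d=a\theta(b)g^2\in H$, reads
\[
\frac{u_1P_3(d)+v_1Q_3(d)}{u_2P_3(d)+v_2Q_3(d)}=\frac{B(b)}{A(a)}\ \propto\ \chi(a^{-1}b).
\]
The left-hand side depends only on $d$; fixing $d$ (equivalently $e:=dg^{-2}=a\theta(b)$) and varying $b$ gives, using that $H$ is abelian, $\chi(a^{-1}b)=\chi(e)^{-1}\chi(\theta(b)b)$, so $\chi(\theta(b)b)$ must be constant in $b$. Evaluating at $b=1$ shows the constant is $1$, whence $\chi(\theta(b))=\chi(b)^{-1}$ for all $b$, and injectivity of $\chi$ yields $\theta(b)=b^{-1}$, i.e.\ $ghg^{-1}=h^{-1}$. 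The main obstacle is the middle step: proving that $m_1,m_2,m_3$ are parametrized by one and the same character $\chi$, rather than three a priori unrelated faithful characters; this is what ties the three coset-parametrizations together and makes the functional equation $\chi(\theta(b)b)=1$ meaningful. It rests on carefully tracking how the two coset conventions $ag$ versus $gb$ introduce $\theta$ in the second and third families, and on the commutativity of $H$ in the final simplification, whereas the ambient facts (index $2$, cyclicity of $H$) are routine consequences of Lemmas~\ref{lm:concurrent_lines}, \ref{lm:Hiscyclic} and Proposition~\ref{pr:3net-properties}.
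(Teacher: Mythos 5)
Your proof is correct, but it follows a genuinely different route from the paper's. The paper argues synthetically: for any $g_1,g_2\in G\setminus H$, the triangles $\alpha_1(g_i)\alpha_2(g_i)\alpha_3(g_i)$ ($i=1,2$) are in perspective from the common point $P$ of $m_1,m_2,m_3$; by the converse Desargues theorem they are perspective from a line, and since two of the three side-intersections are $\alpha_1(1),\alpha_2(1)\in\ell$, that axis is $\ell$ itself. As the side $\alpha_1(g_i)\alpha_2(g_i)$ meets $\ell$ precisely in $\alpha_3(g_i^2)$, this forces $\alpha_3(g_1^2)=\alpha_3(g_2^2)$, hence $g_1^2=g_2^2$ for all elements outside $H$; taking $g_1=g$, $g_2=gh$ gives $g^2=(gh)^2$, i.e.\ $ghg^{-1}=h^{-1}$. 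You instead set up coordinates adapted to the pencil through $Q$, show via the first two families of collinearity determinants that the three coset parametrizations $A,B,C$ are all proportional to a single faithful character $\chi$ of $H$, and then extract from the third family the functional equation $\chi(\theta(b)b)=1$, which by injectivity of $\chi$ yields $\theta(b)=b^{-1}$ directly --- you never isolate the paper's pivotal fact that all squares $g^2$, $g\notin H$, coincide. What each approach buys: the paper's argument is much shorter, coordinate-free, and leans on a single classical theorem; yours is computational but self-contained (only determinant expansions), and it makes explicit the character/parametrization structure on the cosets that the paper elsewhere encodes via cyclic collineation groups (Proposition \ref{pr:3net-properties}(i), Lemma \ref{lm:conicline_index2}). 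Both proofs, yours and the paper's, silently use the same general-position facts (the components avoid the vertices $\ell\cap m_i$ and the center $Q$, so your denominators such as $u_2P_3(d)+v_2Q_3(d)$ and the values $A(a),B(b),C(c)$ never vanish); these are routine to check from injectivity of the $\alpha_i$, but worth a sentence in a polished write-up.
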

\begin{proof}
Assume that $\Lambda$ is contained in the lines $\ell,m_1,m_2,m_3$ and be $P$ the common point of $m_1,m_2,m_3$. In this case $|G:H|=2$ and $H$ is cyclic. For any $g_1,g_2\in G\setminus H$ the triangles $\alpha_1(g_i)\alpha_2(g_i)\alpha_3(g_i)$ ($i=1,2$) are in perspective position from $P$, which implies that the points $\alpha_1(g_1)\alpha_2(g_1) \cap \alpha_1(g_2)\alpha_2(g_2)$, $\alpha_1(g_1)\alpha_3(g_1) \cap \alpha_1(g_2)\alpha_3(g_2)=\alpha_2(1)$, $\alpha_2(g_1)\alpha_3(g_1) \cap \alpha_2(g_2)\alpha_3(g_2)=\alpha_1(1)$ are collinear. Hence, $\alpha_1(g_1)\alpha_2(g_1)$ and $\alpha_1(g_2)\alpha_2(g_2)$ intersect in a common point $\alpha_3(h_0)$ of $\ell$. Therefore, $g_1^2=g_2^2=h_0$. In particular, $g^2=(gh)^2$, which implies $ghg^{-1}=h^{-1}$.
\end{proof}

\begin{proposition} \label{pr:cyclic_alg_net}
Finite light dual multinets labeled by a cyclic group are algebraic.
\end{proposition}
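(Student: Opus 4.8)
The plan is to argue by the index of a maximal-length line and to reduce every case to one of the algebraic types (triangular, pencil, or conic-line), invoking Proposition~\ref{pr:3net-properties}(v) for the genuine dual $3$-net part. First I would dispose of the trivial reductions. If every line belonging to $\Lambda$ has length $1$, then $\Lambda$ is a dual $3$-net labeled by the cyclic group $G$, hence algebraic by Proposition~\ref{pr:3net-properties}(v). Otherwise fix a line $\ell$ of maximal length $\geq 2$ and, using Lemma~\ref{lm:relabeling}, relabel so that $\Lambda_i\cap\ell=\alpha_i(H)$ for a subgroup $H$; since $G$ is cyclic, $H$ is cyclic and normal. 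If $[G:H]=1$ then $\Lambda\subseteq\ell$, and a line lies on a (degenerate) cubic, so $\Lambda$ is algebraic.

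Next I would settle the index-$2$ case. If $[G:H]=2$, Proposition~\ref{pr:index2class} tells us $\Lambda$ is of conic-line or of tetrahedron type. The tetrahedron alternative cannot genuinely occur here: by Lemma~\ref{lm:tetrahedral_dihedral} it forces $ghg^{-1}=h^{-1}$ for $h\in H$, $g\in G\setminus H$, and since $G$ is abelian this gives $h^2=1$ for every $h\in H$; as $H$ is cyclic, $|H|\leq 2$, so $n\leq 4$. When $n=4$ the two types coincide by Proposition~\ref{pr:index2class}, and when $n=2$ the maximal length is $1$ (already handled). Hence $\Lambda$ is of conic-line type, which is algebraic by definition.

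The heart of the argument is the case $[G:H]=d\geq 3$, where I would show that $\Lambda$ is triangular, of pencil type, or of conic-line type. By Lemma~\ref{lm:Hiscyclic}(ii),(iii) each coset $Hg$ produces conic-line dual $3$-subnets, so $\alpha_2(Hg)\cup\alpha_3(Hg)$, $\alpha_1(Hg)\cup\alpha_3(Hg)$ and $\alpha_1(Hg)\cup\alpha_2(g^{-1}H)$ all lie on conics. I would then split according to whether $\alpha_1(Hg)$ with $g\notin H$ lies on a genuine line or on a nondegenerate conic. In the line case, Lemma~\ref{lm:concurrent_lines}(i) puts $\alpha_1(Hg),\alpha_2(Hg),\alpha_3(Hg)$ on three concurrent lines; since $d\geq 3$ the condition $x^2\in H$ cannot hold for all $x$, so by Lemma~\ref{lm:concurrent_lines}(ii) these lines are not all distinct. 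Applying Proposition~\ref{pr:triangles} to the triangular subnets $(\alpha_1(Hg_1),\alpha_2(Hg_2),\alpha_3(Hg_1g_2))$ that link different cosets forces the coset-lines to share their vertices, collapsing $\Lambda$ onto the three sides of one triangle (or onto one pencil), i.e.\ onto a reducible cubic. In the conic case, I would run the collineation-coincidence argument of Proposition~\ref{pr:index2class}, Case~3: the order-$|H|$ collineations preserving the two subnets through $\alpha_1(Hg)$ act identically on $\ell$ and on $\alpha_1(Hg)$, hence coincide, so the two conics meet in at least $|H|+2$ points; as soon as $|H|\geq 3$ this exceeds four and the conics are equal. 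Chaining this identification across the cosets places every point off $\ell$ on a single conic $C$, whence $\Lambda\subseteq\ell\cup C$ is of conic-line type.

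The main obstacle is precisely the case $d\geq 3$, and within it two delicate points. First, in the conic subcase the chaining of the per-coset conics into one global conic $C$ must cope with the fact that the third subnet mixes the cosets $Hg$ and $Hg^{-1}$, so some bookkeeping over the coset structure of $G/H$ is needed to see that all identifications are compatible. Second, the collineation-coincidence and the ``two conics meet in at most four points'' principle are only decisive when $|H|$ is not too small; the values $|H|\in\{2,3,4\}$ must be treated by hand, typically by falling back on the line case and on direct incidence computations. The line subcase likewise hides a combinatorial core: turning the local shared-vertex statements of Proposition~\ref{pr:triangles} into the global assertion that only three lines occur. I expect the bulk of the work, and all the genuine case distinctions, to be concentrated here, while the indices $1$ and $2$ are immediate from the results already established.
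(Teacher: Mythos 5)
Your reduction for $[G:H]\le 2$ is correct and coincides with the paper's (Proposition \ref{pr:index2class} plus Lemma \ref{lm:tetrahedral_dihedral} to exclude the tetrahedron alternative for abelian $G$). The case $[G:H]\ge 3$, however, which you rightly call the heart of the matter, contains genuine gaps, not just ``delicate points''. First, in the conic subcase your collineation-coincidence argument only identifies the conics attached to subnets sharing the coset $Hg$, namely $(\alpha_1(Hg),\alpha_2(H),\alpha_3(Hg))$ and $(\alpha_1(H),\alpha_2(Hg),\alpha_3(Hg))$; to ``chain across cosets'' you must compare conics attached to different cosets, and the only subnets linking $Hg$ to $Hg'$, such as $(\alpha_1(Hg),\alpha_2(Hg'),\alpha_3(Hgg'))$, have no component on $\ell$, so Proposition \ref{pr:3net-properties}(iii) gives no conic and no collineation: all you know from (v) is that they lie on a cubic, and forcing a cubic to contain a given conic as a component needs at least $7$ common points by B\'ezout, i.e. $|H|\ge 7$, not the $|H|\ge 3$ you claim. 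This is exactly why the paper's Lemmas \ref{lm:conicline_noext} and \ref{lm:abelian_triang_or_conicline} carry hypotheses $|H|\ge 7$ (and note that filling this gap by citing Lemma \ref{lm:conicline_noext} would be circular, since its proof invokes Proposition \ref{pr:cyclic_alg_net} itself). Second, in the line subcase the ``collapse onto one triangle'' is asserted, not proved: Proposition \ref{pr:triangles} only matches vertices of two triangular subnets sharing a side, and for $[G:H]\ge 4$ a collapse onto a triangle is actually impossible (Lemma \ref{lm:triang_index3}), so there your argument must end in nonexistence or in pencil type, which requires the non-extendability machinery (Lemmas \ref{lm:triang_noext}, \ref{lm:tetrahedron_noext}, again with lower bounds on $|H|$); also Lemma \ref{lm:concurrent_lines} itself assumes $|H|\ge 3$. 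Third, as a consequence, all of $2\le|H|\le 6$ is deferred to ``direct incidence computations'' that are never performed; that is where the proof simply stops.

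The paper avoids this entire case analysis when $[G:H]\ge 3$: it identifies $G$ with $\mathbb{Z}/n\mathbb{Z}$ and reruns the Lam\'e-configuration proof of \cite[Proposition 18]{knp_3}, whose configurations involve only labels differing by at most $2$. The single new point needed for multinets is that the nine points and six lines of each Lam\'e configuration remain distinct: a coincidence of two lines would create a long line containing $\alpha_i(u),\alpha_i(u')$ with $0<|u-u'|\le 2$, hence a subgroup $H_1$ of index at most $2$ and a line of length at least $n/2$, contradicting the maximality of $\ell$, whose length is at most $n/3$. This yields the cubic uniformly for every $|H|$, which is precisely what your approach cannot deliver; if you want to keep your route, you must either import the Section 5 non-extendability results non-circularly and still settle $|H|\le 6$ by hand, or switch to the Lam\'e argument.
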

\begin{proof}
Using the notation of the rest of this section, we assume that $G$ is a cyclic group. If $|G:H|\leq 2$ then the claim follows from Proposition \ref{pr:index2class} and Lemma \ref{lm:tetrahedral_dihedral}. Assume $|G:H|\geq 3$ and $n\geq 6$. Identify $G$ with $\mathbb{Z}/n\mathbb{Z}$ and use the additive notation.  We claim that the proof of \cite[Proposition 18]{knp_3} can be used to prove that $\Lambda$ is contained in a cubic curve $\mathcal{F}$. That proof is based on Lam\'e configurations consisting of nine points
\[\alpha_1(x_1), \alpha_2(y_1), \alpha_3(z_1),\quad \alpha_1(x_2), \alpha_2(y_2), \alpha_3(z_2), \quad \alpha_1(x_3), \alpha_2(y_3), \alpha_3(z_3)\]
with $x_i,y_i,z_i\in G$, $i=1,2,3$. These points form six triples contained in the six lines $r_1,\ldots,r_6$. One can check that

\begin{enumerate}
\item[(*)] for each Lam\'e configuration $\mathcal{L}$ given in the proof of \cite[Proposition 18]{knp_3}, we have $|x_i-x_j|, |y_i-y_j|, |z_i-z_j|\leq 2$.
\end{enumerate}

In order to make the proof work for the multinet case, we have to show that for the nine points and the six lines of $\mathcal{L}$ are distinct. For the nine points this is clear, since the multinet $\Lambda$ is light. Assume now that $r_k=r_{k'}$, $k\neq k'$ for one of the Lam\'e configurations. Then, $r_k$ is a long line of $\Lambda$ which contains the points $\alpha_i(u)$, $\alpha_i(u')$ for some $i\in \{1,2,3\}$ and $u,u' \in G$, $u\neq u'$. This means that $u,u'$ are contained in a coset of a subgroup $H_1$ of $G$, corresponding to a long line of $\Lambda$. Therefore $|u-u'|\in H_1$, and (*) implies that $|G:H_1|\leq 2$. This contradicts to the assumption that the longest line of $\Lambda$ has length $\leq n/3$.
\end{proof}

\section{Uniqueness results}

In this section we prove that sufficiently large light dual multinets of triangular, conic-line or tetrahedron type cannot be extended in the following sense.

\begin{definition} \label{def:noext}
Let $Q,R$ be quasigroups, $\Lambda=(\Lambda_1,\Lambda_2,\Lambda_3)$, $\Delta=(\Delta_1,\Delta_2,\Delta_3)$ dual multinets labeled by $Q$ and $R$. Let $(\alpha_1,\alpha_2,\alpha_3)$ and $(\beta_1,\beta_2,\beta_3)$ be the respective labelings. We say that $\Lambda$ is a \emph{dual submultinet} of $\Delta$, if $Q\leq R$ and $\alpha_i=\beta_i|_Q$ for $i=1,2,3$.

We say that a group-labeled light dual multinet $\Lambda$ \emph{cannot be extended} if it is not the proper dual submultinet of a group-labeled light dual multinet $\Delta$.
\end{definition}

We continue to use the notation of the preceding section.

\begin{lemma}\label{lm:triang_index3}
Assume that the light dual multinet $(\Lambda_1,\Lambda_2,\Lambda_3)$ is contained in a triangle $\ell=\ell_1$, $\ell_2$, $\ell_3$, and $|H|\geq 3$. Then $H$ is a cyclic normal subgroup of index $3$ in $G$. Moreover, for all $i=1,2,3$ and $x\in G$, there is a cyclic collineation group $\mathcal{H}$ fixing the corners $\ell_i\cap \ell_j$ of the triangle and having $\alpha_i(Hx)$ as an orbit.
\end{lemma}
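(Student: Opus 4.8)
The plan is to read off from the triangular incidence a homomorphism $\psi\colon G\to\mathbb{Z}/3\mathbb{Z}$ with kernel $H$, and then to recognize the dual subnets of Lemma~\ref{lm:Hiscyclic}(ii) as triangular dual $3$-nets and invoke Proposition~\ref{pr:3net-properties}(i). First I would introduce, for each component index $i\in\{1,2,3\}$, the position map $\pi_i\colon G\to\mathbb{Z}/3\mathbb{Z}$ defined by $\pi_i(g)=j$ whenever $\alpha_i(g)\in\ell_{j+1}$, where the three sides are identified with $0,1,2$. Since $\Lambda_i\cap\ell_1=\alpha_i(H)$, we have $\pi_i^{-1}(0)=H$ for every $i$, and in particular $\pi_i(1)=0$. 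The crucial observation is that a collinear triple $\alpha_1(x),\alpha_2(y),\alpha_3(xy)$ is either contained in one side or meets the three sides in three distinct points: a line other than $\ell_1,\ell_2,\ell_3$ meets each side exactly once, and the three points are distinct because the components are disjoint. In $\mathbb{Z}/3\mathbb{Z}$ this ``monochromatic-or-rainbow'' dichotomy is exactly the identity
\[ \pi_1(x)+\pi_2(y)+\pi_3(xy)\equiv 0\pmod 3, \]
valid for all $x,y\in G$. Specializing to $y=1$ and to $x=1$ gives $\pi_3=-\pi_1=-\pi_2$, hence $\pi_1=\pi_2=:\psi$ and $\pi_3=-\psi$; substituting back yields $\psi(xy)=\psi(x)+\psi(y)$, so $\psi$ is a homomorphism with $\ker\psi=H$. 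As $\Lambda$ is not contained in a line, $\psi$ is surjective, so $H$ is normal of index $3$.

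Next I would use that each $\pi_i$, being $\pm\psi$, is constant on the cosets of $H$, so every set $\alpha_i(Hg)$ lies on a single side. Revisiting the subnets of Lemma~\ref{lm:Hiscyclic}(ii), for $g\notin H$ the net $(\alpha_1(H),\alpha_2(Hg),\alpha_3(Hg))$ has its three components on sides indexed by the values $0,\psi(g),-\psi(g)$ of $\psi$; since $\psi(g)\neq 0$ these three values are distinct in $\mathbb{Z}/3\mathbb{Z}$, so the components lie on the three distinct, non-concurrent sides and the subnet is of triangle type. Proposition~\ref{pr:3net-properties}(i) then forces $H$ to be cyclic and supplies an abelian collineation group of order $|H|^2$ leaving the three sides invariant, hence fixing the three corners $\ell_i\cap\ell_j$, and (in the coordinate model underlying that proposition) acting on each component as a single orbit.

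For the final assertion I would run through the three families in Lemma~\ref{lm:Hiscyclic}(ii) to exhibit an arbitrary $\alpha_i(Hx)$ as a component of a triangular dual $3$-net inscribed in the same triangle: for $x\notin H$ the nets $(\alpha_1(Hx),\alpha_2(H),\alpha_3(Hx))$, $(\alpha_1(H),\alpha_2(Hx),\alpha_3(Hx))$ and $(\alpha_1(Hx),\alpha_2(x^{-1}H),\alpha_3(H))$ handle $i=1,2,3$ respectively, while for $x\in H$ one uses a net of the form $(\alpha_1(H),\alpha_2(Hg),\alpha_3(Hg))$ with $g\notin H$ together with its two companions. Applying Proposition~\ref{pr:3net-properties}(i) to the chosen subnet gives the abelian group of order $|H|^2$ fixing the corners; restricting it to a cyclic subgroup still transitive on $\alpha_i(Hx)$ (possible because its induced action on the side carrying $\alpha_i(Hx)$ fixes the two endpoints of that side and hence is cyclic) produces the required cyclic collineation group $\mathcal{H}$ with $\alpha_i(Hx)$ as an orbit.

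The main obstacle is the very first step: making the maps $\pi_i$ well defined and the dichotomy exact. This rests on verifying that no point of a component is a vertex of the triangle — a vertex lies on two sides, which would make $\pi_i$ ambiguous and could produce a collinear triple through a corner. I expect this nondegeneracy to follow from the maximality of the length of $\ell_1$ together with $|H|\geq 3$, and it is the point requiring the most care; once the identity $\pi_1(x)+\pi_2(y)+\pi_3(xy)\equiv 0$ is secured, the rest of the argument is formal.
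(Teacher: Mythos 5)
Your plan is correct in substance, but it reaches the group-theoretic conclusions by a genuinely different route than the paper. The paper argues geometrically: it first shows, by projecting $\alpha_1(H)$ from a point $\alpha_2(x)$ with $x\notin H$, that every coset image $\alpha_i(Hx)$ lies on a single side; it then gets $|G:H|=3$ by observing that the three sides are lines of the multinet of the same length $|H|$; and it obtains normality from a separate incidence argument, namely that the subnet $(\alpha_1(xH),\alpha_2(Hx^{-1}),\alpha_3(xHx^{-1}))$ has $\alpha_3(xHx^{-1})$ meeting $\alpha_3(H)$, forcing $xHx^{-1}=H$. Your ``monochromatic-or-rainbow'' identity $\pi_1(x)+\pi_2(y)+\pi_3(xy)\equiv 0 \pmod 3$ packages all three of these steps into the single statement that $\psi=\pi_1=\pi_2=-\pi_3$ is a homomorphism onto $\mathbb{Z}/3\mathbb{Z}$ with kernel $H$: constancy on cosets, index $3$, and normality come out simultaneously, and you get for free that the multinet cannot sit on only two sides (the image of $\psi$ is a subgroup, hence trivial or everything). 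For the cyclicity of $H$ and the collineation group both proofs do the same thing, namely apply Proposition \ref{pr:3net-properties}(i) to the triangular dual subnets of Lemma \ref{lm:Hiscyclic}(ii); your extra remark on how to extract a cyclic subgroup still transitive on $\alpha_i(Hx)$ (the induced group on a side fixes its two corners, hence is a finite subgroup of $\K^*$ and cyclic) is a legitimate way to pass from the abelian group of order $|H|^2$ in the stated proposition to the cyclic group asserted in the lemma, and it leans on the structure theory of triangular dual $3$-nets no more heavily than the paper's own one-sentence conclusion does.

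The one genuine gap in your writeup is the nondegeneracy you flag yourself: that no point of a component sits at a corner of the triangle, which is needed both for the $\pi_i$ to be well defined and for the subnets to be honest triangular dual $3$-nets. Note that the paper's proof needs exactly the same fact (for the projection, for the equality of the three lengths, and for applying Proposition \ref{pr:triangles}/\ref{pr:3net-properties}(i)) and passes over it in silence, so you are not behind the paper here; and your guess about the ingredients is right, since the claim does close with the tools at hand. Briefly: if some $\alpha_i(g)$ were the corner $\ell_2\cap\ell_3$ off $\ell$, join it to a non-corner point of $\alpha_j(H)\subseteq\ell$ ($j\neq i$, which exists since $|H|\geq 3$); that line meets the triangle in only those two points, leaving no room for the third point of the corresponding collinear triple, contradicting disjointness of the components. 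If instead some component point were a corner on $\ell$, say $\alpha_1(v)=\ell\cap\ell_2$ with $v\in H$, then the two-point argument first pushes $\alpha_2(G\setminus H)$ onto $\ell_2$, so $\ell_2$ belongs to the multinet; a short computation then shows $\Lambda_1\cap\ell_2=\{\alpha_1(v)\}$ while $|\Lambda_2\cap \ell_2|\geq n-|H|\geq |H|\geq 3$, contradicting $|S_1|=|S_2|$ in Lemma \ref{lm:lengthdef} (the remaining corner/component configurations are handled symmetrically, using $S_1\cdot S_2\subseteq S_3$ in the last one). With that supplied, your argument is complete.
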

\begin{proof}
For any $x\in G\setminus H$, we can project $\alpha_1(H)$ from $\alpha_2(x)$ to $\ell_3$. Hence, for all $i=1,2,3$, $\alpha_i(Hx)$ is contained in one of the three lines $\ell_1,\ell_2,\ell_3$. It is immediate that $\ell_1,\ell_2,\ell_3$ are lines of the same length $|H|$. This implies $|G:H|=3$ and $H$ must be cyclic. Moreover, for any $x\in G\setminus H$, $(\alpha_1(xH),\alpha_2(Hx^{-1}),\alpha_3(xHx^{-1}))$ is either a triangular dual subnet, or contained in a line. Since $\alpha_3(xHx^{-1})\cap \alpha_3(H)\neq \emptyset$, we have $\alpha_3(xHx^{-1})\subseteq \ell_1$ and $xHx^{-1}=H$. The existence of the collineation group $\mathcal{H}$ follows from the fact that each $\alpha_i(Hx)$ is contained in a  triangular dual subnet with sides $\ell_1,\ell_2,\ell_3$.
\end{proof}

\begin{lemma}\label{lm:triang_noext}
If $|H|\geq 4$ then triangle type light dual multinets cannot be extended in the sense of Definition \ref{def:noext}.
\end{lemma}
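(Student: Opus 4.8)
The plan is to argue by contradiction. Suppose that $\Lambda$, labeled by $G$, is a \emph{proper} dual submultinet of a group-labeled light dual multinet $\Delta=(\Delta_1,\Delta_2,\Delta_3)$ labeled by some $R>G$ in the sense of Definition~\ref{def:noext}, with $\beta_i|_G=\alpha_i$. By Lemma~\ref{lm:triang_index3}, $H$ is a cyclic normal subgroup of index $3$ in $G$, the components of $\Lambda$ lie on the sides $\ell_1=\ell,\ell_2,\ell_3$ of a triangle with vertices $A_1,A_2,A_3$, and there is a cyclic collineation group $\mathcal H$ fixing the three vertices and having each $\alpha_i(H)$ as a regular orbit. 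Fixing $r\in R\setminus G$, I would study the three dual $3$-subnets of $\Delta$ attached to $Hr$ via (the $\Delta$-analogue of) Lemma~\ref{lm:Hiscyclic}(ii), namely those of the form $(\beta_1(H),\beta_2(Hr),\beta_3(Hr))$ and its cyclic companions, each having one component equal to some $\alpha_j(H)\subseteq\ell_1$.

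Since $H$ is cyclic of order $\ge 4$, each such subnet is algebraic by Proposition~\ref{pr:3net-properties}(v), and as one of its components lies on the line $\ell_1$ it is either triangular or of conic-line type. The first key step is to pin these subnets to the triangle of $\Lambda$. In the triangular case, Proposition~\ref{pr:triangles} applied to the subnet and to the matching subnet of $\Lambda$ (which share the component on $\ell_1$) forces them to share the two vertices $A_2,A_3$ lying on $\ell_1$; together with Lemma~\ref{lm:concurrent_lines} and the group $\mathcal H$ this shows the two moving components again lie on lines through $A_2$ and $A_3$. In the conic-line case, Proposition~\ref{pr:3net-properties}(iii) supplies a cyclic collineation of order $|H|$ preserving the subnet and acting on $\ell_1$ with the same regular orbit $\alpha_j(H)$ as $\mathcal H$; since for $|H|\ge 3$ a cyclic subgroup of $\mathrm{PGL}_2(\K)$ is the unique cyclic group of its order inside the dihedral stabilizer of one of its regular orbits, this collineation coincides with $\mathcal H$ on $\ell_1$ and the conic is forced through $A_2$ and $A_3$.

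Running these constraints simultaneously for the three subnets attached to $Hr$ (and to $r^{-1}H$) and comparing the lines and conics through the fixed vertices, I would show that the moving components of the coset $Hr$ are forced either onto the sides $\ell_1,\ell_2,\ell_3$ of the original triangle or onto a single conic through two of its vertices; here two conics through $A_2,A_3$ sharing at least three of the $\ge|H|$ points of a moving component must coincide. In the conic situation, $\Delta$ restricted to $\langle H,r\rangle$ is of conic-line or tetrahedron type, which is controlled by Proposition~\ref{pr:index2class} and Lemma~\ref{lm:tetrahedral_dihedral}; in the triangular situation $\Delta$ is again triangular on $\ell_1,\ell_2,\ell_3$, so by Lemma~\ref{lm:triang_index3} $R$ has a cyclic index-$3$ subgroup $H'\supseteq H$ and all of $\Delta$ lies in the same triangle. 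In every case the conclusion to extract is that the regular orbit $\alpha_j(H)$ on each side cannot be enlarged compatibly with the transversal collinearities relating the three sides, so that $Hr$ must coincide with a coset of $H$ already contained in $G$; this yields $r\in G$, contradicting $r\in R\setminus G$.

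The hard part will be this last step: the transversal (cross-coset) collinearities tie the three sides of the triangle rigidly together through the collineation group $\mathcal H$ and the fixed vertices $A_1,A_2,A_3$, and one must show they admit no genuine enlargement of the labeling group. The condition $|H|\ge 4$ should enter precisely here, to exclude the exceptional configurations with sides of length $3$ (compare the order-$18$ example of Section~\ref{sec:example18} and alternative (ii) of Theorem~\ref{th:main}); the purely geometric rigidity used earlier, namely the determination of a cyclic group by one regular orbit and the coincidence of conics, already works for $|H|\ge 3$. A secondary technical point is that $H$ is a priori only normal in $G$, so before invoking Lemma~\ref{lm:concurrent_lines} or Proposition~\ref{pr:index2class} one should first pass to $\langle H,r\rangle$ and verify the relevant normality and index hypotheses there.
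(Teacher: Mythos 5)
Your toolkit is the right one (argue by contradiction, invoke Lemma \ref{lm:triang_index3}, use algebraicity of cyclic-labeled subnets, Proposition \ref{pr:triangles}, and rigidity of cyclic collineations), but there is a genuine gap at the decisive step, and it traces back to your choice of auxiliary subnets. You anchor all three subnets attached to $Hr$ to the \emph{same} side $\ell$ of the triangle: each of $(\beta_1(H),\beta_2(Hr),\beta_3(Hr))$, $(\beta_1(rH),\beta_2(H),\beta_3(rH))$, $(\beta_1(Hr),\beta_2(r^{-1}H),\beta_3(H))$ has its old component equal to some $\alpha_j(H)\subseteq\ell$. Consequently every constraint you can extract is relative to $\ell$ and its two vertices: a moving component lies on \emph{some} line through one of those vertices, or on \emph{some} conic through them --- a pencil, respectively a two-parameter family, of possibilities. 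Nothing here forces the moving components onto the other two sides of the triangle or onto a single conic, so the sentence "Running these constraints simultaneously \dots I would show that the moving components \dots are forced either onto the sides \dots or onto a single conic" is not derivable from what precedes it. Worse, the three subnets you list pairwise share \emph{no} component (the moving cosets are $Hr$, $rH$ and $r^{-1}H$, which need not coincide since $r$ need not normalize $H$), so Proposition \ref{pr:triangles} cannot be applied between them to create interaction. You then explicitly defer "the hard part" --- showing that the transversal collinearities admit no enlargement of the labeling group --- but that \emph{is} the statement of the lemma; the proof has no content exactly where content is required. The appeal to Proposition \ref{pr:index2class} has the same status: its index-$2$ hypothesis for $\langle H,r\rangle$ is never established (you flag this yourself without resolving it).

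The paper's proof avoids all of this with a different pairing. It fixes the single new coset $rH$ (with $r\in R\setminus G$) and forms the three subnets $\Delta^{(i)}=(\beta_1(rH),\beta_2(Hx^i),\beta_3(rHx^i))$, $i=0,1,2$, whose old components $\alpha_2(H),\alpha_2(Hx),\alpha_2(Hx^2)$ lie on the \emph{three different} sides of the triangle; note that $rH\cdot Hx^i=rHx^i$ uses only $HH=H$ and Lemma \ref{lm:triang_index3}, so no normality inside $\langle H,r\rangle$ is ever needed --- this disposes of your "secondary technical point." Now the constraints interact: if $\beta_1(rH)$ lies on a line $m$, Proposition \ref{pr:triangles} applied to each $\Delta^{(i)}$ against the triangular subnets of $\Lambda$ forces $m$ to pass through a vertex of each of the three pairs $\{T_1,T_2\}$, $\{T,T_1\}$, $\{T,T_2\}$, hence through two vertices, i.e.\ $m$ is a side of the triangle --- contradiction. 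If instead $\beta_1(rH)$ lies on an irreducible conic, the cyclic collineations $\varphi_0,\varphi_1,\varphi_2$ given by Proposition \ref{pr:3net-properties}(iii) for the three subnets agree on the $|H|\geq 4$ points of $\beta_1(rH)$, which are in general position; hence they coincide as collineations of the plane, and then $\ell,\ell_1,\ell_2$ all pass through the two fixed points on the conic, forcing $\ell=\ell_1=\ell_2$ --- contradiction. This also corrects your account of where $|H|\geq 4$ enters: it is needed precisely so that agreement on the new coset determines a \emph{plane} collineation (three points do not suffice, even on a conic, because the conics of the three subnets are not known to coincide a priori); it is not merely a device to exclude the length-$3$ exceptional configurations.
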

\begin{proof}
Let $G_1$ be a group and assume that $\Lambda$ is a proper dual submultinet of a $G_1$-labeled light dual multinet. Take elements $x\in G\setminus H$, $y\in G_1\setminus G$. By Lemma \ref{lm:triang_index3}, $H\triangleleft G$ and $G=H\cup Hx\cup Hx^2$. Let $\ell,\ell_1,\ell_2$ be the lines of the dual submultinet, and $T=\ell_1\cap \ell_2$, $T_i=\ell \cap \ell_i$, $i=1,2$. As $H$ is cyclic we have two subcases by Lemma \ref{lm:Hiscyclic}(iii).

Case 1: $\alpha_1(yH)$ is contained in a line $m$. Consider the dual submultinets
\begin{align*}
\Delta^{(0)}&=(\alpha_1(yH),\alpha_2(H),\alpha_3(yH))\\
\Delta^{(1)}&=(\alpha_1(yH),\alpha_2(Hx),\alpha_3(yHx))\\
\Delta^{(2)}&=(\alpha_1(yH),\alpha_2(Hx^2),\alpha_3(yHx^2))
\end{align*}
All three are triangular. By \cite[Proposition 10]{knp_3}, $m$ passes through the corners $T,T_1,T_2$, a contradiction.

Case 2: $\alpha_1(yH)$ is contained in the nonsingular conic $C$. Consider again the dual subnets $\Delta^{(0)}$, $\Delta^{(1)}$, $\Delta^{(2)}$. By Proposition \ref{pr:3net-properties}(iii), we have cyclic collineations $\varphi_0, \varphi_1, \varphi_2$, whose action on $\alpha_1(yH)$ coincide. As $|H|\geq 4$, we have $\varphi_0=\varphi_1=\varphi_2$. Moreover, $\ell$, $\ell_1$, $\ell_2$ pass through the two points of $C$, left fixed by $\varphi_0$. Thus, $\ell=\ell_1=\ell_2$, a contradiction.
\end{proof}

\begin{remark}
The light dual multinet of order $18$ given in Section \ref{sec:example18} shows that in Lemma \ref{lm:triang_noext}, the condition $|H|\geq 4$ is indeed needed. It is true that the example of order $18$ is not labeled by a group, but its subloop structure is such that our method cannot handle the group-labeled extensions of its triangular dual submultinet.
\end{remark}

\begin{lemma}\label{lm:conicline_index2}
Assume that the light dual multinet $(\Lambda_1,\Lambda_2,\Lambda_3)$ is contained in $\ell \cup C$ where $C$ is a nonsingular conic, and $|H|\geq 5$. Then $H$ is a cyclic normal subgroup of index $2$ in $G$. Moreover, there is a cyclic collineation group $\mathcal{H}$ fixing $\ell\cup C$ and having $\alpha_i(Hx)$ as an orbit for all $i=1,2,3$, $x\in G$.
\end{lemma}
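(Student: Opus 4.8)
The plan is to first pin down the index and normality of $H$ by a direct secant–line count on the conic, then extract cyclicity from the conic–line dual $3$-subnets sitting inside $\Lambda$, and finally glue the collineation groups of these subnets into a single cyclic group $\mathcal{H}$.

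First I would note that, since $\Lambda_i\cap\ell=\alpha_i(H)$ and the whole configuration lies on $\ell\cup C$, every point $\alpha_i(g)$ with $g\in G\setminus H$ lies on $C$ (it is not on $\ell$, and it is on $\ell\cup C$). Now take $g_1,g_2\in G\setminus H$. Then $\alpha_1(g_1)$ and $\alpha_2(g_2)$ are two distinct points of $C$, so the line $\alpha_1(g_1)\alpha_2(g_2)$ is a secant meeting $C$ in exactly these two points, and it is distinct from $\ell$ (neither of its two named points lies on $\ell$). This line belongs to $\Lambda$ and also carries $\alpha_3(g_1g_2)$, a third point distinct from the other two; since a secant cannot meet $C$ in a third point, $\alpha_3(g_1g_2)$ must lie on $\ell$, whence $g_1g_2\in H$. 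Thus $(G\setminus H)(G\setminus H)\subseteq H$. Fixing $g\in G\setminus H$, this gives $G\setminus H\subseteq g^{-1}H$, a single left coset disjoint from $H$, so $G=H\cup g^{-1}H$; hence $[G:H]=2$ and $H\trianglelefteq G$.

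For cyclicity, I would use Lemma \ref{lm:Hiscyclic}(ii): for $g\in G\setminus H$ the triple $(\alpha_1(H),\alpha_2(Hg),\alpha_3(Hg))$ is a dual $3$-subnet labeled by $H$, whose first component lies on $\ell$ and whose remaining two components lie on $C$ by the previous paragraph. The hypothesis $|H|\geq 5$ ensures the $2|H|$ points on $C$ are enough to single out $C$, so this is a genuine conic–line type dual $3$-net, and Proposition \ref{pr:3net-properties}(iii) forces $H$ to be cyclic (alternatively one may invoke Lemma \ref{lm:Hiscyclic}(iv) under the standing hypothesis $p=0$ or $p>n$). Since $[G:H]=2$ there is a unique nontrivial coset $Hg=gH=g^{-1}H$, and Lemma \ref{lm:Hiscyclic}(ii) supplies the three conic–line dual $3$-subnets $(\alpha_1(H),\alpha_2(Hg),\alpha_3(Hg))$, $(\alpha_1(Hg),\alpha_2(H),\alpha_3(Hg))$ and $(\alpha_1(Hg),\alpha_2(Hg),\alpha_3(H))$, all lying on the same pair $\ell\cup C$. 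To each, Proposition \ref{pr:3net-properties}(iii) attaches a cyclic collineation group of order $|H|$ preserving $\ell$ and $C$. Because $p=0$ or $p>n\geq|H|$, a generator is semisimple, so it has exactly two fixed points on $\ell$; in a conic–line net these coincide with the two points of $\ell\cap C$, which are therefore the common fixed points $S,T\in C$ of all three groups. The collineations fixing $C$ together with $S$ and $T$ form a torus isomorphic to $\K^*$, inside which the cyclic subgroup of order $|H|$ is unique; hence the three groups coincide in one cyclic group $\mathcal{H}$ fixing $\ell\cup C$. Reading off its orbits from the three subnets shows that each $\alpha_i(H)$ and each $\alpha_i(Hg)$ is an $\mathcal{H}$-orbit, i.e. $\alpha_i(Hx)$ is an orbit for every $i$ and every $x\in G$.

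I expect the last step to be the main obstacle: certifying that the three a priori distinct cyclic groups are literally the same. The crux is showing they share the fixed pair on $C$, which is exactly where the secant nature of $\ell$ (guaranteed by semisimplicity under $p=0$ or $p>n$) and the identification of the fixed points with $\ell\cap C$ are essential; once this is in place, uniqueness of a finite cyclic subgroup of prescribed order inside the fixing torus closes the argument. The role of $|H|\geq 5$ throughout is to exclude small degenerate configurations in which the relevant subnets would fail to determine $C$ or to be of genuine conic–line type.
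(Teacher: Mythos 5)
Your proof is correct, and its first half coincides with the paper's: the secant count showing $(G\setminus H)(G\setminus H)\subseteq H$, hence $[G:H]=2$ and $H\trianglelefteq G$, is exactly the paper's argument (phrased there as projecting $\alpha_2(yH)$ from $\alpha_1(x)$ onto $\alpha_3(xyH)$, which must avoid $C$), and cyclicity of $H$ comes from the conic--line subnets in both treatments. Where you genuinely diverge is the merging of the three cyclic collineation groups $\mathcal{H}_1,\mathcal{H}_2,\mathcal{H}_3$. The paper's route is more elementary: each pair of the three subnets shares a component $\alpha_i(Hx)\subseteq C$, on which both groups induce the same permutation group (translations by the abelian group $H$); since any four distinct points of a nonsingular conic are in general position, collineations agreeing on such a component are equal, so $\mathcal{H}_1=\mathcal{H}_2=\mathcal{H}_3$ outright. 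Your route instead places all three groups inside the torus of collineations preserving $C$ and fixing the two points of $\ell\cap C$, and concludes by uniqueness of the cyclic subgroup of order $|H|$ in $\K^*$. This works, but it needs two structural inputs the paper's argument does not: that $\ell$ is a secant rather than a tangent of $C$, and that the generators fix the two points of $\ell\cap C$ individually. Neither is stated in Proposition \ref{pr:3net-properties}(iii). The secancy is true but is not a consequence of semisimplicity alone, as you suggest; it follows from the net structure (in the tangent case the net's translations are products of involutions sharing the tangency point, hence unipotent, which is impossible for a nontrivial element of finite order prime to $p$). The second input has a cheap substitute you could have used: each generator preserves both $\ell$ and $C$, hence permutes the two-point set $\ell\cap C$, and cannot interchange its points, since an element of $\mathrm{PGL}(2,\K)$ of order $|H|\geq 5$ prime to $p$ that swaps two points has order $2$. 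So: same skeleton and same first half, but for the key uniqueness step the paper's agreement-on-points argument is self-contained, while yours trades it for fixed-point/torus machinery plus structural facts about conic--line nets that must be imported from the literature.
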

\begin{proof}
By assumption, $\cup_{i=1}^3 \alpha_i(G\setminus H)$ is contained in $C$. Fix elements $x,y\in G\setminus H$ and project $\alpha_2(yH)$ from $\alpha_1(x)$ to $\alpha_3(xyH)$. As the latter is disjoint from $C$, we have $\alpha_3(H)=\alpha_3(xyH)$. Thus, $xy\in H$, which implies $|G:H|=2$ and $H\triangleleft G$.

By looking at the dual submultinets
\begin{align*}
&(\alpha_1(H), \alpha_2(Hx), \alpha_3(Hx)),\\
&(\alpha_1(Hx), \alpha_2(H), \alpha_3(Hx)),\\
&(\alpha_1(Hx), \alpha_2(Hx), \alpha_3(H)),
\end{align*}
one obtains cyclic collineation groups $\mathcal{H}_1$, $\mathcal{H}_2$, $\mathcal{H}_3$, whose actions coincide on $\alpha_i(Hx)$, consisting of at least 5 points in general position. Hence, $\mathcal{H}_1=\mathcal{H}_2=\mathcal{H}_3$.
\end{proof}

\begin{lemma}\label{lm:conicline_noext}
If $|H|\geq 7$ then conic-line type dual light multinets cannot be extended in the sense of Definition \ref{def:noext}.
\end{lemma}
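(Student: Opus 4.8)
The plan is to argue by contradiction and to show that a proper group-labeled light extension would have to be conic-line with the \emph{same} line and conic, which then collapses the group. So suppose $\Lambda$ is a proper dual submultinet (Definition \ref{def:noext}) of a $G_1$-labeled light dual multinet $\Delta=(\Delta_1,\Delta_2,\Delta_3)$ with labeling $(\beta_1,\beta_2,\beta_3)$, where $G<G_1$ and $\alpha_i=\beta_i|_G$. By Lemma \ref{lm:conicline_index2} the subgroup $H$ is cyclic and normal of index $2$ in $G$, say $G=H\cup Hx$, and there is a cyclic collineation group $\langle\varphi\rangle$ of order $m=|H|\ge 7$ preserving $\ell\cup C$ with $\alpha_i(H)\subseteq\ell$ and $\alpha_i(Hx)\subseteq C$; write $\{P_1,P_2\}=\ell\cap C$ and note that injectivity of the $\alpha_i$ keeps $P_1,P_2$ off every component. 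The goal is to prove that for every $y\in G_1\setminus G$ the coset image $\beta_1(Hy)\cup\beta_2(Hy)\cup\beta_3(Hy)$ lies on $C$.

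First I would place each new coset on a single conic. For $y\in G_1\setminus H$ the triples $(\beta_1(H),\beta_2(Hy),\beta_3(Hy))$ and $(\beta_1(Hy),\beta_2(H),\beta_3(Hy))$ are dual $3$-subnets labeled by the cyclic group $H$, and each has one component on the line $\ell$; hence both are conic-line (Lemma \ref{lm:Hiscyclic}(iii) and its analogue with the roles of the components permuted), putting $\beta_2(Hy)\cup\beta_3(Hy)$ and $\beta_1(Hy)\cup\beta_3(Hy)$ on irreducible conics. These two conics share the $m\ge 5$ points $\beta_3(Hy)$, so they coincide in a single conic $C_y\supseteq\beta_1(Hy)\cup\beta_2(Hy)\cup\beta_3(Hy)$.

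The engine of the proof is then a B\'ezout argument linking $C_y$ to the old conic $C$. Consider the light dual sub-multinet $(\beta_1(Hx),\beta_2(Hx^{-1}y),\beta_3(Hy))$: it is labeled, up to isotopy, by the cyclic group $H$, so by Proposition \ref{pr:cyclic_alg_net} its point set lies on a cubic $\mathcal F$. Its first component $\beta_1(Hx)=\alpha_1(Hx)$ consists of $m\ge 7>6$ points of the irreducible conic $C$, so B\'ezout forces $C$ to be a component of $\mathcal F$, i.e. $\mathcal F=C\cup\ell''$ for some line $\ell''$. Consequently $\beta_3(Hy)\subseteq C\cup\ell''$; since $\beta_3(Hy)\subseteq C_y$ and $C_y$ is irreducible, the points of $\beta_3(Hy)$ lying off $C$ all lie in $\ell''\cap C_y$, a set of at most two points, so at least $m-2\ge 5$ of them lie on $C$. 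Two irreducible conics meeting in $\ge 5$ points are equal, whence $C_y=C$ and the whole coset image lies on $C$. Running this for every $y$ shows $\Delta\subseteq\ell\cup C$; as the new cosets avoid $P_1,P_2$, only $H$ maps into $\ell$, so $\ell$ is the longest line of $\Delta$ with associated subgroup exactly $H$. Applying Lemma \ref{lm:conicline_index2} to $\Delta$ now gives $[G_1:H]=2=[G:H]$, hence $G_1=G$, contradicting properness.

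The step I expect to be the main obstacle is the coset bookkeeping that makes the displayed triples genuine $H$-subnets: writing $Hx^{-1}y$ and $Hy$ as single cosets and reading off the products requires $H\triangleleft G_1$, whereas Lemma \ref{lm:conicline_index2} only supplies $H\triangleleft G$. I would therefore first pin down the subgroup $H_1\ge H$ cut out by the longest line of $\Delta$ (as in Lemma \ref{lm:triang_index3}) and establish $H_1=H$ together with $H\triangleleft G_1$, by projecting the new cosets and invoking the maximality of $\ell$, \emph{before} running the conic argument. The other delicate point, that the residual line $\ell''$ meets the irreducible conic $C_y$ in at most two points, is precisely what fixes the quantitative threshold at $m-2\ge 5$, that is $|H|\ge 7$.
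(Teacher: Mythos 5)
Your overall engine---restricting to a coset triple labeled by the cyclic group $H$, invoking Proposition \ref{pr:cyclic_alg_net} to get a cubic, and playing B\'ezout against the conic $C$---is essentially the paper's Case~3, and that part of your argument is sound: the triple $(\beta_1(Hx),\beta_2(Hx^{-1}y),\beta_3(Hy))$ really is an $H$-subnet, since $(Hx)(Hx^{-1}y)=H(xHx^{-1})y=Hy$ uses only $H\triangleleft G$. But two genuine gaps block the proof before this engine can run. First, your construction of the conic $C_y$ rests on the triple $(\beta_1(Hy),\beta_2(H),\beta_3(Hy))$, which is not a dual subnet unless $(Hy)H=Hy$, i.e.\ unless $y$ normalizes $H$. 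You flag this yourself, but your proposed repair---``first establish $H_1=H$ and $H\triangleleft G_1$ by projecting the new cosets and invoking the maximality of $\ell$''---is not an argument, and it is essentially circular: proving that $H$ is normal in an over-group is exactly the hard content of Proposition \ref{pr:Hisnormal}, whose proof \emph{depends} on this lemma. Note that replacing the offending triple by the legitimate one, $(\beta_1(yH),\beta_2(H),\beta_3(yH))$, does not rescue your coincidence argument, because $\beta_3(yH)$ and $\beta_3(Hy)$ are different sets when $y$ does not normalize $H$, so the two conics are no longer forced to share $m\geq 5$ points. The paper sidesteps normality in $G_1$ entirely by working with the mixed-coset triple $(\alpha_1(yH),\alpha_2(Hx),\alpha_3(yHx))$, which is a valid $H$-subnet because $(yH)(Hx)=yHx$ needs nothing beyond associativity.

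Second, Lemma \ref{lm:Hiscyclic}(iii) only places $\beta_2(Hy)\cup\beta_3(Hy)$ in \emph{a} conic, which may be degenerate: the subnet could be triangular, with each component on a line. You assert irreducibility without justification, and both of your key steps---``two conics sharing at least five points coincide'' and ``$\ell''\cap C_y$ has at most two points''---collapse if $C_y$ splits into two lines. These degenerate possibilities are precisely the paper's Cases~1 and~2, and excluding them is not free: the paper uses the cyclic collineation groups supplied by Lemma \ref{lm:conicline_index2} and Proposition \ref{pr:3net-properties}(iii), matches their actions on the common component $\alpha_2(Hx)$ (this is where $|H|\geq 7$ gives enough points), and derives the contradiction $m=\ell$ from the coincidence of their fixed elements. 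Your closing step (once $\Delta\subseteq\ell\cup C$ is known, Lemma \ref{lm:conicline_index2} applied to $\Delta$ forces $[G_1:H]=2$ and hence $G_1=G$) is a correct and pleasant way to finish, but until the line cases are eliminated and the coset bookkeeping is repaired, the construction of $C_y$, and with it the whole proof, is incomplete.
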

\begin{proof}
Let $G_1$ be a group and assume that $\Lambda$ is a proper dual submultinet of a $G_1$-labeled light dual multinet. Take elements $x\in G\setminus H$, $y\in G_1\setminus G$. Assume $\cup_{i=1}^3 \alpha_i(G) \subseteq \ell \cup C$, $\cup_{i=1}^3 \alpha_i(H) \subseteq \ell$, and $\{T_1,T_2\}=\ell \cap C$. By Lemma \ref{lm:conicline_index2}, $H\triangleleft G$ and $G=H\cup Hx$. Moreover, there is a collineation group $\mathcal{H}=\langle \varphi \rangle$  fixing $\ell,C$ and having $\alpha_2(Hx)$ as an orbit.

Case 1: $\alpha_1(yH)$ is contained in the line $m$. Then, $\alpha_3(yH)$ is contained in the line $m'$ and $m,m'$ pass through $T_1,T_2$. The dual subnet $(\alpha_1(yH),\alpha_2(Hx),\alpha_3(yHx))$ is of conic-line type and $\alpha_2(Hx) \cup \alpha_3(yHx)$ is contained in a nonsingular conic $C'$. Moreover, by Proposition \ref{pr:3net-properties}, there is a collineation group $\mathcal{H'}=\langle \varphi' \rangle$ fixing $C'$ and $m$. Moreover, the actions of $\varphi$ and $\varphi'$ coincide on $\alpha_2(Hx)$, hence $\varphi=\varphi'$ and $m=\ell$, a contradiction.

Case 2: $\alpha_3(yHx)$ is contained in the line $m$. We can modify the proof for case 1 to eliminate this possibility.

Case 3: $\alpha_1(yH)$ and $\alpha_3(xHy)$ are not contained in a line. Then, the dual subnet $(\alpha_1(yH),\alpha_2(Hx),\alpha_3(yHx))$ is labeled by the cyclic group $H$. Moreover, it is neither triangular nor of conic-line type, hence is contained in an irreducible cubic curve $\Gamma$ by Proposition \ref{pr:cyclic_alg_net}. This is not possible, as $\alpha_2(Hx)\subseteq C\cap \Gamma$ and $|H|\geq 7$.
\end{proof}

\begin{lemma}\label{lm:tetrahedron_index2}
Assume that the light dual multinet $(\Lambda_1,\Lambda_2,\Lambda_3)$ is contained in $\ell \cup m_1 \cup m_2 \cup m_3$ where $m_1,m_2,m_3$ are distinct lines passing through a point $Q\not\in \ell$. If $|H|\geq 3$ then $H$ is a cyclic normal subgroup of index $2$ in $G$. Moreover, there are collineation groups $\mathcal{H}_i$, $i=1,2,3$, such that the following holds:
\begin{enumerate}[(i)]
\item $\alpha_i(H)$ is an orbit of $\mathcal{H}_i$.
\item $\alpha_j(G\setminus H), \alpha_k(G\setminus H)$ are orbits of $\mathcal{H}_i$, $j,k\neq i$.
\item $\mathcal{H}_i$ fixes $\ell,m_j,m_k$, $j,k\neq i$.
\end{enumerate}
\end{lemma}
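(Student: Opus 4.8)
The plan is to proceed in four stages: control of the individual cosets, normality of $H$, the ``one line per component'' pattern, and finally index $2$ together with the collineation groups. First I would analyse, for each $x\in G\setminus H$, the three coset dual $3$-nets of Lemma~\ref{lm:Hiscyclic}(ii). Each of them has one component on $\ell$, so Lemma~\ref{lm:Hiscyclic}(iv) (this is where $p=0$ or $p>n$ enters) shows that $H$ is cyclic and that the remaining two components lie on a conic. Since every point off $\ell$ already lies on $m_1\cup m_2\cup m_3$ and an irreducible conic meets a line in at most two points, for $|H|\ge 4$ this conic must split into two of the $m_i$; hence each such subnet is triangular, every $\alpha_i(Hx)$ and $\alpha_i(xH)$ lies on a single line $m_j$, and Proposition~\ref{pr:3net-properties}(i) attaches to each subnet a cyclic collineation group of order $|H|$ fixing its three sides.

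Next I would prove $H\triangleleft G$. Fix $x\in G\setminus H$ and compare the triangular subnets $(\alpha_1(xH),\alpha_2(H),\alpha_3(xH))$ and $(\alpha_1(Hx),\alpha_2(x^{-1}H),\alpha_3(H))$. Both $\alpha_1(xH)$ and $\alpha_1(Hx)$ contain the point $\alpha_1(x)$, so they lie on a common line $m$ through $Q$, and on $m$ the two associated cyclic groups both fix the pair $\{m\cap\ell,\,Q\}$ (the second vertex is $Q$ because the third sides also pass through $Q$). As $|H|$ is invertible in $\K$, a line admits a unique cyclic collineation group of order $|H|$ with two prescribed fixed points, so the two restrictions to $m$ coincide; since $\alpha_1(xH)$ and $\alpha_1(Hx)$ are both the orbit of $\alpha_1(x)$ under this common group, injectivity of $\alpha_1$ forces $\alpha_1(xH)=\alpha_1(Hx)$, i.e. $xH=Hx$.

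With $H$ normal I would establish the rigid pattern $\alpha_i(G\setminus H)\subseteq m_i$. By Lemma~\ref{lm:concurrent_lines}(i) the three images of a coset lie on lines through $Q$, and applying Proposition~\ref{pr:triangles} to each of the three families of coset subnets (grouped by which component sits on $\ell$) shows that every family uses a \emph{fixed} pair of the lines $m_j$, because the shared component on $\ell$ forces the two vertices on $\ell$ to be the same. If two of these pairs coincided, all points off $\ell$ would lie on only two of the $m_i$, so $\Lambda$ would be of triangle type and fall under Lemma~\ref{lm:triang_index3}; thus in the genuinely tetrahedral case the three pairs are the three distinct $2$-subsets, and each component's off-$\ell$ part lies on a single line, which I label $m_i$.

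The decisive step is index $2$. If $|G:H|\ge 3$ I would pick $x,y\in G\setminus H$ with $xy\notin H$; then $(\alpha_1(Hx),\alpha_2(Hy),\alpha_3(Hxy))$ is a dual $3$-net labeled by $H$ (a transversal of the $m_i$ meets $m_3$ once, so there are no extra collinearities) whose three components sit on the concurrent lines $m_1,m_2,m_3$, i.e. a pencil-type net. Proposition~\ref{pr:3net-properties}(ii) then makes $|H|$ a power of $p$, impossible when $p=0$ or $p>n$ and $|H|\ge 3$. Hence $xy\in H$ for all $x,y\in G\setminus H$, which forces $|G:H|=2$, so $H$ is a cyclic normal subgroup of index $2$; as $G\setminus H$ is now a single coset, the groups $\mathcal{H}_i$ are exactly the cyclic collineation groups of the triangular subnets $(\alpha_1(H),\alpha_2(Hx),\alpha_3(Hx))$, $(\alpha_1(Hx),\alpha_2(H),\alpha_3(Hx))$, $(\alpha_1(Hx),\alpha_2(Hx),\alpha_3(H))$, and reading off Proposition~\ref{pr:3net-properties}(i) gives precisely (i)--(iii). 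I expect the main obstacle to be the third stage: converting the shared-vertex information of Proposition~\ref{pr:triangles} into the uniform assignment $\alpha_i(G\setminus H)\subseteq m_i$, since this is what guarantees that the mixed-coset subnet genuinely sits on three \emph{distinct} concurrent lines and hence is of pencil type. The boundary case $|H|=3$, where the conic of Lemma~\ref{lm:Hiscyclic}(iv) need not degenerate and a component image might spread over all three $m_i$, will also require separate attention.
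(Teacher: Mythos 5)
Your Stage 2 and Stage 4 contain genuinely good ideas: the normality argument via uniqueness of the order-$|H|$ subgroup of the collineations of $m$ fixing $m\cap\ell$ and $Q$ pointwise is correct (granted Stage 1), and it is a different route from the paper, whose entire proof consists of showing ``in the usual way'' that $\alpha_3(xHy)\subseteq\ell$ for all $x,y\in G\setminus H$, which gives $xHy=H$ and hence normality and index $2$ in one stroke. However, your sketch has genuine gaps, and the decisive one is exactly the stage you flag as the main obstacle.

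Stage 3 is not just technical bookkeeping; Stage 4 fails without it. Proposition~\ref{pr:triangles} only forces two triangular subnets sharing a component to have the same \emph{unordered pair} of vertices on the common side; it neither decides which of the two remaining components sits on which line, nor excludes that cosets of two \emph{different} components occupy the same $m_i$. If $\alpha_1(Hx)$ and $\alpha_2(Hy)$ lie on one line $m_i$, then each point $\alpha_3(h_1xh_2y)$ lies on the line spanned by two distinct points of $m_i$, i.e.\ on $m_i$ itself, so your mixed subnet $(\alpha_1(Hx),\alpha_2(Hy),\alpha_3(Hxy))$ is contained in a \emph{single} line. It is then not of pencil type, Proposition~\ref{pr:3net-properties}(ii) yields no contradiction (a dual multinet labelled by $H$ lying inside one line is a perfectly legitimate object), and index $2$ does not follow. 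This degeneration cannot be assumed away: a triangle-type multinet satisfies the lemma's literal hypothesis (add a superfluous line through one vertex) and violates the conclusion, so the exclusion of such coincidences is the actual mathematical content here --- it is precisely what the paper's claim $\alpha_3(xHy)\subseteq\ell$ encodes.

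Two further defects. First, the statement covers $|H|=3$ and $|H|=4$, but Stage 1 needs $|H|\geq4$ to degenerate the conic, and even then you must still rule out that a component of the degenerate conic fails to be one of the $m_i$, and that a single coset image splits between the two lines of the degenerate conic; the natural counting/translation arguments for these close comfortably only for $|H|\geq5$. So the small cases in the statement remain unproved. Second, your final step misreads Proposition~\ref{pr:3net-properties}(i): it provides an \emph{abelian} collineation group of order $|H|^2$, not a cyclic one of order $|H|$, and the difference is real: for even $|H|$ there is \emph{no} cyclic collineation group of order $|H|$ having all three components of a triangular dual $3$-net as orbits (such a group fixes the three vertices, hence is diagonal $\mathrm{diag}(1,a,b)$, acting on the three sides by $b/a$, $b^{-1}$, $a^{-1}$, which cannot all be primitive $|H|$-th roots of unity when $|H|$ is even). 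The conclusions (i)--(iii) do hold, but only by taking for $\mathcal{H}_i$ the order-$|H|^2$ abelian group attached to the triangular subnet, as the paper intends.
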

\begin{proof}
In the usual way we show that for any $x,y\in G\setminus H$, $\alpha_3(xHy) \subseteq \ell$, which implies $xHy=H$. $|G:H|=2$ and $H\triangleleft G$ follows. The construction and the properties for the groups $\mathcal{H}_i$ follow from the properties of triangular dual subnets.
\end{proof}

\begin{lemma}\label{lm:tetrahedron_noext}
If $|H|\geq 5$ then tetrahedron type dual light multinets cannot be extended in the sense of Definition \ref{def:noext}.
\end{lemma}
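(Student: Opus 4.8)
The plan is to argue by contradiction, in the style of Lemmas~\ref{lm:triang_noext} and~\ref{lm:conicline_noext}. Suppose $\Lambda$ is a proper dual submultinet of a light dual multinet $\Delta$ labeled by a group $G_1>G$, and fix $y\in G_1\setminus G$. By Lemma~\ref{lm:tetrahedron_index2}, $H$ is a cyclic normal subgroup of index $2$ in $G$, say $G=H\cup Hx$, with $\alpha_i(H)\subseteq\ell$ and $\alpha_i(Hx)\subseteq m_i$, and with cyclic collineation groups $\mathcal H_1,\mathcal H_2,\mathcal H_3$ fixing the respective triangles and carrying $\alpha_i(H)$ on $\ell$ and $\alpha_j(Hx),\alpha_k(Hx)$ on $m_j,m_k$ as orbits. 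Since $\ell$ is the only long line of the tetrahedron, the two subconfigurations
\[S=(\alpha_1(yH),\alpha_2(H),\alpha_3(yH)),\qquad N=(\alpha_1(yH),\alpha_2(Hx),\alpha_3(yHx))\]
are dual $3$-nets labeled by the cyclic group $H$; by Proposition~\ref{pr:cyclic_alg_net} they are algebraic, and since each has a component on a line ($\alpha_2(H)\subseteq\ell$, resp.\ $\alpha_2(Hx)\subseteq m_2$), each is triangular or of conic-line type, the pencil case being excluded by $p=0$ or $p>n$ through Proposition~\ref{pr:3net-properties}(ii).

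The decisive case is the one where $\alpha_1(yH)$ does not lie on a line. Then both $S$ and $N$ are of conic-line type, with irreducible conics $C_S\supseteq\alpha_1(yH)\cup\alpha_3(yH)$ and $C_N\supseteq\alpha_1(yH)\cup\alpha_3(yHx)$ whose lines are $\ell$ and $m_2$. The collineations of $S$ and $N$ act on $\alpha_2(H)$, resp.\ on $\alpha_2(Hx)$, exactly as $\mathcal H_2$, resp.\ $\mathcal H_1$, do, because a cyclic group is determined by a regular orbit of size $\ge3$; matching fixed points gives $\ell\cap C_S=\{\ell\cap m_1,\ell\cap m_3\}$ and $m_2\cap C_N=\{\ell\cap m_2,Q\}$. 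As $\alpha_1(yH)\subseteq C_S\cap C_N$ has $|H|\ge5>4$ points, Bézout forces $C_S=C_N$; but then this irreducible conic would contain the three distinct collinear points $\ell\cap m_1,\ell\cap m_2,\ell\cap m_3$ of $\ell$, which is absurd. This is where $|H|\ge5$ is used in its sharp form.

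It remains to treat $\alpha_1(yH)\subseteq m$ for a line $m$. Then $S$ and $N$ are triangular, and Proposition~\ref{pr:triangles}, applied to $S$ and the triangle $(\alpha_1(Hx),\alpha_2(H),\alpha_3(Hx))$ through their common component $\alpha_2(H)$ on $\ell$, and to $N$ and $(\alpha_1(H),\alpha_2(Hx),\alpha_3(Hx))$ through $\alpha_2(Hx)$ on $m_2$, forces the vertices of $S$ and $N$ on $\ell$ and on $m_2$ to coincide with those of the $\mathcal H_i$-triangles. Reading off these incidences shows that $m$ passes through one of $\ell\cap m_1,\ell\cap m_3$ and meets $m_2$ in $\ell\cap m_2$ or in $Q$, whence $m\in\{\ell,m_1,m_3\}$. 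A second application of Proposition~\ref{pr:triangles} to $N$ and $S$ through the shared component $\alpha_1(yH)\subseteq m$, together with $Q\notin\ell$, then drives each surviving possibility to the conclusion that a whole coset—for instance $\alpha_3(yHx)$—is pushed onto $\ell$.

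The main obstacle is exactly this line case. Unlike the conic-line Lemma~\ref{lm:conicline_noext}, the tetrahedron carries the three extra lines $m_1,m_2,m_3$ concurrent at $Q$, so the matching produces the spurious branches $m=m_1,m_3$ that must be eliminated one at a time; and the final step, where a new coset lands on $\ell$, is not contradictory on its face but forces $\ell$ to have length $\ge2|H|$ in $\Delta$. Turning this into a contradiction is the crux: I would invoke the subloop description of $\ell$ (the Lemma following Lemma~\ref{lm:relabeling}) to produce a subgroup $\tilde H\le G_1$ with $\tilde H\cap G=H$ yet $|\tilde H|\ge2|H|$, and then argue that the collineation groups $\mathcal H_i$, which fix $Q\notin\ell$, cannot be reconciled with $\ell$ absorbing the coset $yH$. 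This is the step I expect to demand the most care, and it is where the hypothesis $|H|\ge5$ is fully spent—$|H|\ge3$ for the collineation and $\mathrm{Proposition~\ref{pr:triangles}}$ arguments, and $|H|>4$ for the Bézout estimate.
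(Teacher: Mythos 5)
Your conic case is correct and is in substance the paper's own Case~2: the paper pins down the cyclic collineation attached to the orbit $\alpha_1(yH)$ and concludes that $\ell$ and $m_2$ would both have to be its fixed secant, while you match fixed points against $\mathcal{H}_2$ and $\mathcal{H}_1$, let B\'ezout identify $C_S=C_N$, and observe that this conic would then contain the three collinear points $\ell\cap m_1$, $\ell\cap m_2$, $\ell\cap m_3$. These are the same mechanism, and your bookkeeping of where $|H|\ge 5$ enters is accurate. Your line case also begins exactly as the paper's: Proposition \ref{pr:triangles} applied to the two subnets through $\alpha_2(H)$ and $\alpha_2(Hx)$ forces $m\in\{\ell,m_1,m_3\}$.

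The genuine gap is that your line case never reaches a contradiction: it ends with a plan (``I would invoke \dots and then argue \dots''), not an argument. For comparison, the paper stops even earlier -- it asserts $m'\neq\ell$, deduces $m'\in\{m_1,m_3\}$, and declares that a contradiction outright, without your further step showing that a coset such as $yHx$ is pushed onto $\ell$. But your instinct that these branches are ``not contradictory on their face'' is correct, and this is why your proposed repair cannot work: no incidence or collineation argument can eliminate them, because the configuration in question is realizable. In the $\PG(3,\K)$ tetrahedron construction of Section~3, enlarge the cyclic group $H\le\K^*$ to a finite overgroup $K$ and project from the same generic center: the resulting multinet labeled by $K\rtimes\langle\sigma\rangle$ is a group-labeled light dual multinet that properly contains the one labeled by $H\rtimes\langle\sigma\rangle$, on the same four lines, with the label set of $\ell$ growing from $H$ to $K$. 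In particular the subgroup $\tilde H$ you propose to construct does exist there ($\tilde H=K$) and is perfectly compatible with the collineation groups $\mathcal{H}_i$, so the contradiction you are hunting for is simply not present in those ingredients. Any correct completion must instead use the standing convention of Sections~4--5, which is also how the lemma is actually applied in Lemma \ref{lm:abelian_triang_or_conicline} and Proposition \ref{pr:Hisnormal}: $H$ is by definition the \emph{full} label set of $\ell$ in the ambient multinet, so $\ell$ is not permitted to absorb new cosets. With that pinned down, each branch closes at once ($m'=\ell$ gives $yH\subseteq H$; $m'=m_1$ or $m_3$ gives, via your second application of Proposition \ref{pr:triangles}, $yHx\subseteq H$), whereas without it the statement one is trying to prove is not even true in the literal sense of Definition \ref{def:noext}. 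So your write-up is incomplete at its crux, but the difficulty you isolated is real and is glossed over by the paper's one-line dismissal.
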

\begin{proof}
We use the notation of Lemma \ref{lm:tetrahedron_index2}. Let $G_1$ be a group and assume that $\Lambda$ is a proper dual submultinet of a $G_1$-labeled light dual multinet. Take elements $x\in G\setminus H$, $y\in G_1\setminus G$. As $H$ is cyclic we have two subcases by Lemma \ref{lm:Hiscyclic}(iii).

Case 1: $\alpha_1(yH) \subseteq m'$ line. Then we have triangular dual subnets
\[(\alpha_1(yH),\alpha_2(H),\alpha_3(yH)) \quad \mbox{ and } \quad (\alpha_1(yH),\alpha_2(Hx),\alpha_3(yHx)). \]
This implies $T_1\in m'$ or $T_3\in m'$. From $m'\neq \ell$, $T_2\not\in m'$ we obtain $Q\in m'$, $m'=m_1$ or $m'=m_3$, a contradiction.

Case 2: $\alpha_1(yH) \subseteq C$ nonsingular conic $C$. Same subnets as before imply that $\ell$ and $m_2$ are the fixed secant of the collineation group associated to $\alpha_1(yH)$. Hence, $\ell=m_2$, a contradiction.
\end{proof}

\begin{lemma}\label{lm:abelian_triang_or_conicline}
If $H$ is a cyclic normal subgroup of $G$ with $|H|\geq 7$ then any $G$-labeled dual light multinet is either triangular or of conic-line type or of tetrahedron type. In particular, $|G:H|\leq 3$.
\end{lemma}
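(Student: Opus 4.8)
The plan is to fix an element $x\in G\setminus H$ and to read off the structure of $\Lambda$ from the three dual $3$-subnets of Lemma \ref{lm:Hiscyclic}(ii), each labeled by the cyclic group $H$. Concretely, I would focus on $\Delta_x=(\alpha_1(Hx),\alpha_2(H),\alpha_3(Hx))$: since $\alpha_2(H)\subseteq\ell$ lies on a line and $p=0$ or $p>n$ rules out the pencil type, Proposition \ref{pr:3net-properties}(iv),(v) forces $\Delta_x$ to be either triangular or of conic-line type. This gives the basic dichotomy, namely that $\alpha_1(Hx)$ lies on a line, or that $\alpha_1(Hx)$ lies on an irreducible conic (it cannot lie on a line in the latter case, as $|H|\ge 7$). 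In each branch the goal is to produce a subgroup $K$ with $H\le K\le G$ such that $\Lambda|_K$ is one of the three named types, and then to invoke the non-extendability results of Section 5, all applicable because $|H|\ge 7$, to conclude $K=G$ and hence the index bound.

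In the conic branch I would first show that $\alpha_1(Hx)$, $\alpha_2(Hx)$ and $\alpha_3(Hx)$ all lie on one and the same irreducible conic $C$: the subnets $\Delta_x$ and $(\alpha_1(H),\alpha_2(Hx),\alpha_3(Hx))$ place $\alpha_3(Hx)$ on two conics sharing these $\ge 7$ points, and two conics meeting in at least $5$ points coincide. To obtain $x^2\in H$ I would compare $\Delta_x$ with the subnet $(\alpha_1(Hx),\alpha_2(Hx),\alpha_3(Hx^2))$, which also has $\alpha_1(Hx)\subseteq C$: the collineations supplied by Proposition \ref{pr:3net-properties}(iii) for these two conic-line nets act identically on the $\ge 7$ points of $\alpha_1(Hx)$, hence are equal and share the same fixed pair $\ell\cap C$; this forces $\alpha_3(Hx^2)\subseteq\ell$, i.e.\ $x^2\in H$. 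Then $K=\langle H,x\rangle=H\cup Hx$ and $\Lambda|_K\subseteq\ell\cup C$ is a conic-line light dual multinet; Lemma \ref{lm:conicline_index2} gives $|K:H|=2$ and Lemma \ref{lm:conicline_noext} shows it is non-extendable, whence $G=K$, $\Lambda$ is of conic-line type and $|G:H|=2$.

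In the line branch, Lemma \ref{lm:concurrent_lines}(i) places $\alpha_2(Hx),\alpha_3(Hx)$ on lines $m_2,m_3$ with $m_1,m_2,m_3$ meeting in a point $Q\notin\ell$, and I would split on whether these three lines are distinct. If they are distinct, Lemma \ref{lm:concurrent_lines}(ii) yields $x^2\in H$, so $K=\langle H,x\rangle=H\cup Hx$ and $\Lambda|_K\subseteq\ell\cup m_1\cup m_2\cup m_3$ is of tetrahedron type; Lemmas \ref{lm:tetrahedron_index2} and \ref{lm:tetrahedron_noext} then give $G=K$, tetrahedron type, $|G:H|=2$. If two of the $m_i$ coincide, the lines $\ell,m_1,m_3$ are three non-concurrent lines, so $\Lambda$ is locally triangular; here I would use Proposition \ref{pr:triangles} together with the cyclic collineation groups of the triangular subnets (Proposition \ref{pr:3net-properties}(i)) to align the triangles attached to the cosets $H,Hx,Hx^2$ into a single triangle, obtaining $x^3\in H$ and $\Lambda|_{\langle H,x\rangle}$ triangular; Lemma \ref{lm:triang_index3} gives index $3$ and Lemma \ref{lm:triang_noext} forbids a proper extension, so $G=\langle H,x\rangle$ and $|G:H|=3$.

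The hard part is precisely the passage from one coset to the whole multinet, that is, bounding $|G:H|$. The local analysis of a single $x$ only constrains $H$, $Hx$ and $Hx^2$; what guarantees that no further coset escapes the conic, the tetrahedron, or the triangle is the rigidity made available by $|H|\ge 7$: two conics through the $\ge 7$ points of $\alpha_i(Hx)$ must coincide, a collineation preserving a conic (or fixing the corners of a triangle) is determined by its action on those $\ge 7$ points, and the resulting configuration is non-extendable by the lemmas of Section 5, the conic-line case needing exactly the threshold $|H|\ge 7$. I expect the coincident-lines (triangular) subcase to be the most delicate, since there one must show that the three triangular subnets close up into a single triangle rather than spawning new long lines, which is where Proposition \ref{pr:triangles} and the cyclic collineation groups do the essential work.
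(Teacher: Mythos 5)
Your proposal follows the paper's own proof in structure: fix $x\in G\setminus H$, split according to whether $\alpha_1(Hx)$ lies on a line or on an irreducible conic, build a triangular, tetrahedron or conic-line submultinet labeled by $H\langle x\rangle$, and let the non-extendability lemmas of Section 5 force $G=H\langle x\rangle$. The dichotomy, the distinct-lines subcase via Lemma \ref{lm:concurrent_lines}(ii), and the endgame all match the paper. However, there is one genuine gap, in the conic branch, at the step producing $x^2\in H$. You apply Proposition \ref{pr:3net-properties}(iii) to $(\alpha_1(Hx),\alpha_2(Hx),\alpha_3(Hx^2))$, calling it a conic-line net; but at that point you only know $\alpha_1(Hx)\cup\alpha_2(Hx)\subseteq C$, and the assertion that $\alpha_3(Hx^2)$ lies on a line --- i.e.\ that this subnet is of conic-line type at all --- is exactly what the step is supposed to establish, so as written the argument is circular: Proposition \ref{pr:3net-properties}(iii) cannot be invoked before the type is known. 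The paper closes this hole with an extra ingredient: the subnet is labeled by the cyclic group $H$, hence algebraic (Proposition \ref{pr:cyclic_alg_net}), so it lies on a cubic $\Gamma$; since $|C\cap\Gamma|\geq 2|H|>6$, B\'ezout forces $C$ to be a component of $\Gamma$, whence $\Gamma=C\cup m$ and $\alpha_3(Hx^2)\subseteq m$; only then does the comparison of the cyclic collineations and their fixed points on $C$ give $m=\ell$, i.e.\ $Hx^2=H$. This insertion is compatible with your toolkit (you use the same rigidity facts elsewhere), but without it your conic branch does not stand.

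Two smaller points in the coincident-lines subcase, which you correctly flag as delicate but leave as a sketch. First, you must argue that the coincidence is necessarily $m_1=m_2$: if $m_1=m_3$ (resp.\ $m_2=m_3$), the collinearity relations of the subnet $(\alpha_1(Hx),\alpha_2(H),\alpha_3(Hx))$ would force $\alpha_2(H)$ (resp.\ $\alpha_1(H)$) onto that line, off $\ell$, a contradiction; only then do $\ell$, $m_1=m_2$ and $m_3$ form a triangle with $\alpha_1(Hx)\cup\alpha_2(Hx)\cup\alpha_3(Hx^2)\subseteq m_1$. Second, the paper's actual mechanism for ``closing up'' the triangle is to bring in the coset $Hx^{-1}$, which supplies a line $k$ containing $\alpha_1(Hx^{-1})\cup\alpha_2(Hx^{-1})\cup\alpha_3(Hx^{-2})$, and then to show via the fixed points of the triangular subnets that $(\alpha_1(Hx),\alpha_2(H),\alpha_3(Hx))$ is contained in $\ell\cup m_1\cup k$, forcing $Hx=Hx^{-2}$, i.e.\ $x^3\in H$. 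Your plan of ``aligning the triangles attached to $H$, $Hx$, $Hx^2$'' names the right tools (Proposition \ref{pr:triangles} and the cyclic collineation groups) but omits this pivot through $Hx^{-1}$, which is where the work actually happens.
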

\begin{proof}
Fix an element $x\in G\setminus H$.

Case 1: $\alpha_1(Hx)$ is contained in a line $m$. Then by Lemma \ref{lm:concurrent_lines}, $\alpha_2(Hx), \alpha_3(Hx)$ are contained in lines $m',m''$ such that $m,m',m''$ are concurrent. Assume that these lines are distinct lines. Then, as before, $\langle H,x\rangle$ labels a tetrahedron type dual submultinet. By Lemma \ref{lm:tetrahedron_noext}, we have $G=\langle H,x\rangle=H\langle x \rangle$. Assume that $m,m',m''$ are not distinct. $m=m''$ would mean that $\alpha_2(H)\subseteq m$, which is not possible. Similarly, $m'\neq m''$. Hence, $m=m'$ and $\alpha_1(Hx) \cup \alpha_2(Hx) \cup \alpha_3(Hx^2) \subseteq m$. Analogously, there is a line $k$ containing $\alpha_1(Hx^{-1}) \cup \alpha_2(Hx^{-1}) \cup \alpha_3(Hx^{-2})$. Considering the fixed points of the triangular dual subnets contained in $\ell \cup m \cup k$, one shows that the cyclic collineations corresponding to $\alpha_1(Hx)$, $\alpha_2(Hx)$ and $\alpha_3(Hx)$ share the same fixed points. Then, the dual subnet $(\alpha_1(Hx),\alpha_2(H),\alpha_3(Hx))$ turns out to be contained in $\ell\cup m \cup k$, which implies $Hx=Hx^{-2}$. Hence, we have a triangular submultinet labeled by the subgroup $H\langle x \rangle$. Lemma \ref{lm:triang_noext} implies $G=H\langle x \rangle$ and $|G:H|=3$.

Case 2: $\alpha_1(Hx)$ is contained in the nonsingular conic $C$. The dual subnets
\[(\alpha_1(Hx),\alpha_2(H),\alpha_3(Hx)) \mbox{ and } (\alpha_1(H),\alpha_2(Hx),\alpha_3(Hx))\]
are of conic-line type. By $|H|\geq 5$, $\alpha_1(Hx) \cup \alpha_2(Hx) \cup \alpha_3(Hx) \subseteq C$. Since the dual subnet $(\alpha_1(Hx),\alpha_2(Hx),\alpha_3(Hx^2))$ is labeled by the cyclic group $H$, it is contained in a cubic curve $\Gamma$ which has at least $10$ points in common with $C$. Therefore, $C$ is a component of $\Gamma$ and $\alpha_3(Hx^2)$ is contained in a line. Using the fixed points of the corresponding cyclic collineations, we obtain $\alpha_3(Hx^2)\subseteq \ell$. This implies $Hx^2=H$ and we have a conic-line type dual submultinet labeled by $H\langle x\rangle$. Lemma \ref{lm:conicline_noext} implies $G=H\langle x\rangle$ and $|G:H|=2$.
\end{proof}

In fact, the last part of the proof shows more.

\begin{lemma} \label{lm:cyclicH_order5}
If $H$ is a cyclic normal subgroup of order $5$ in $G$, then $G/H$ is either of order $3$ or an elementary abelian $2$-group. \qed
\end{lemma}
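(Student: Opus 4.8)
The plan is to rerun the case analysis from the proof of Lemma~\ref{lm:abelian_triang_or_conicline} under the weaker hypothesis $|H|=5$, keeping track of which non-extendability result is used at each step. The decisive observation is that of the three such results, Lemma~\ref{lm:tetrahedron_noext} requires only $|H|\geq 5$ and Lemma~\ref{lm:triang_noext} only $|H|\geq 4$, so both remain available, whereas Lemma~\ref{lm:conicline_noext} requires $|H|\geq 7$ and is lost. I would fix an arbitrary $x\in G\setminus H$ and split via Lemma~\ref{lm:Hiscyclic}(iii): since $H$ is cyclic of order $5$, either $\alpha_1(Hx)$ lies on a line (Case 1) or it lies on a nonsingular conic (Case 2).

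In Case 1 I would reproduce Case 1 of Lemma~\ref{lm:abelian_triang_or_conicline} unchanged. If the three concurrent lines are distinct, $\langle H,x\rangle$ labels a tetrahedron type dual submultinet and Lemma~\ref{lm:tetrahedron_noext} forces $G=H\langle x\rangle$ with $|G:H|=2$; if they coincide, the fixed-point analysis produces a triangular submultinet labeled by $H\langle x\rangle$ and Lemma~\ref{lm:triang_noext} forces $G=H\langle x\rangle$ with $|G:H|=3$. Either way $G/H$ is cyclic of order $2$ or $3$, hence of order $3$ or an elementary abelian $2$-group, and the lemma holds. The substantive point is Case 2, where I would run the conic-line argument only as far as it goes without Lemma~\ref{lm:conicline_noext}. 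Using $|H|=5$, the relevant dual subnets are of conic-line type, so $\alpha_1(Hx)\cup\alpha_2(Hx)\cup\alpha_3(Hx)\subseteq C$; then $(\alpha_1(Hx),\alpha_2(Hx),\alpha_3(Hx^2))$ is labeled by the cyclic group $H$, lies on a cubic $\Gamma$ meeting $C$ in at least $10$ points, so by B\'ezout $C$ is a component of $\Gamma$, and comparing the fixed points of the associated cyclic collineations forces $\alpha_3(Hx^2)\subseteq\ell$. This yields $Hx^2=H$, that is $x^2\in H$. The obstacle---and the reason the conclusion is weaker than in Lemma~\ref{lm:abelian_triang_or_conicline}---is precisely that without $|H|\geq 7$ one cannot continue via Lemma~\ref{lm:conicline_noext} to deduce $G=H\langle x\rangle$; all that survives is the relation $x^2\in H$ for this particular $x$.

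Finally I would assemble the dichotomy over all $x\in G\setminus H$. If some $x$ falls into Case 1, that single step already gives $G=H\langle x\rangle$ with $|G:H|\in\{2,3\}$, and we are done. Otherwise every $x\in G\setminus H$ falls into Case 2, so $x^2\in H$ holds for every such $x$, and trivially for $x\in H$ as well; thus every nontrivial element of $G/H$ has order $2$, whence $G/H$ is an elementary abelian $2$-group. In all cases $G/H$ is either of order $3$ or an elementary abelian $2$-group, as asserted.
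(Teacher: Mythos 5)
Your proposal is correct and is essentially the paper's own argument: the paper proves this lemma only by the remark that ``the last part of the proof shows more,'' i.e.\ by rerunning the proof of Lemma~\ref{lm:abelian_triang_or_conicline} with $|H|=5$, exactly as you do. You correctly identify the key point that Lemmas~\ref{lm:tetrahedron_noext} and~\ref{lm:triang_noext} survive at $|H|=5$ while Lemma~\ref{lm:conicline_noext} does not, so Case~2 yields only $x^2\in H$, and aggregating over all $x\in G\setminus H$ gives precisely the stated dichotomy.
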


\section{Light dual multinets and elementary abelian groups}

\begin{lemma} \label{lm:c2^3}
If $p\neq 2$, then no light dual multinet can be labeled by the elementary abelian group of order $8$.
\end{lemma}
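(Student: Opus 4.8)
The plan is to argue by contradiction within the standing framework of the section. Suppose $\Lambda=(\Lambda_1,\Lambda_2,\Lambda_3)$ is labeled by $G\cong C_2^3$ and is not contained in a line; let $\ell$ be a line of maximal length and $H\le G$ the subgroup with $\alpha_i(H)=\ell\cap\Lambda_i$. Since $|G|=8$ and $G\ne H$ we have $|H|\in\{1,2,4\}$. I would first record the arithmetic features of $C_2^3$ that drive every contradiction: it is neither cyclic nor dihedral, it has no subgroup of index $3$, and its subgroups of index $2$ are all Klein four-groups, hence non-cyclic. The hypothesis $p\ne2$ enters only here: a finite subgroup of $(\K,+)$ is an elementary abelian $p$-group, so $C_2^3$ embeds in $(\K,+)$ only when $p=2$, and by Proposition \ref{pr:3net-properties}(ii) this already excludes that $\Lambda$ be of pencil type.

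If $|H|=1$ then $\Lambda$ is itself a dual $3$-net labeled by $C_2^3$. Were it algebraic, the labeling group would embed into the group law of a plane cubic; but since $p\ne2$ the $2$-group $C_2^3$ is prime-to-$p$ torsion, and the prime-to-$p$ torsion of any plane cubic has rank at most $2$, whereas $C_2^3$ has rank $3$. Hence the cubic would have to be reducible, forcing triangle, pencil or conic-line type and thus $G$ cyclic or $G\le(\K,+)$, both impossible. A non-algebraic group-labeled dual $3$-net is of tetrahedron (dihedral) type, which $C_2^3$ is not; the only remaining realizations arise inside a finite projective subplane, which for an elementary abelian $2$-group forces $p=2$. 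Thus $|H|=1$ cannot occur.

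For $|H|=4$ the subgroup $H\cong C_2^2$ has index $2$. By Lemma \ref{lm:Hiscyclic}(ii) the triple $(\alpha_1(H),\alpha_2(Hx),\alpha_3(Hx))$ with $x\in G\setminus H$ is a dual $3$-net of order $4$ labeled by $H$, and its first component $\alpha_1(H)$ lies on $\ell$. When $p=0$ or $p>4$, Proposition \ref{pr:3net-properties}(iv) then forces the labeling group $H\cong C_2^2$ to be cyclic, a contradiction. This disposes of $|H|=4$ in every characteristic except $p=3$, which I expect to require a separate, hands-on treatment of a $C_2^2$-labeled dual $3$-net possessing a collinear component.

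The remaining and hardest case is $|H|=2$, where the longest line has length $2$ and the index bounds ($|H|\ge5$ in Lemma \ref{lm:conicline_index2}, $|H|\ge3$ with index $3$ in Lemma \ref{lm:triang_index3}) that power Sections 4--5 are unavailable. My plan here is to pass to the Klein four subgroups $K$ with $H\le K\le G$ and to the order-$4$ sub-multinets they label: each has maximal line length $2$, so by Proposition \ref{pr:index2class} (with $n=4$, where the conic-line and tetrahedron types coincide) it places six of its points on $\ell$ and the other six on a conic $C_K$. The real work --- and the main obstacle of the whole lemma --- is to glue these conics (and the companion pencils of lines coming from the several $K$ through a fixed involution) so as to force all of $\Lambda$ onto a single cubic; once $\Lambda$ is algebraic, the presence of the length-$2$ line makes that cubic reducible, returning us to the triangle/pencil/conic-line alternatives already excluded by the subgroup structure of $C_2^3$ and by $p\ne2$. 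In short, the difficulty is entirely that $8$ is too small for the large-$|H|$ machinery, so the low-index situations $|H|\in\{2,4\}$ and the single characteristic $p=3$ must be settled directly rather than by the general lemmas.
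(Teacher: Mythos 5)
Your proposal does not close the lemma: it is a case-analysis plan whose decisive cases are left open, and --- more fundamentally --- the machinery it relies on is unavailable under the lemma's hypothesis. The lemma assumes only $p\neq 2$, so $p=3,5,7$ are allowed; but the classification results you invoke (Proposition \ref{pr:3net-properties}(iv), the ``algebraic or tetrahedron type'' dichotomy from \cite{knp_3}) all require $p=0$ or $p>n$. Concretely: in the case $|H|=1$ your torsion-rank argument is fine \emph{once} you know the dual $3$-net is algebraic or of tetrahedron type, but for $p\in\{3,5,7\}$ nothing in this paper or in \cite{knp_3,bkm} gives that dichotomy, and your appeal to ``realizations inside a finite projective subplane, which forces $p=2$'' is not a theorem anywhere; in the case $|H|=4$ you yourself defer $p=3$ to ``a separate, hands-on treatment''; and in the hardest case $|H|=2$ you only describe a programme --- gluing the conics $C_K$ attached to the Klein four-subgroups $K\supseteq H$ into a single cubic --- and explicitly flag that gluing as ``the real work,'' without carrying it out. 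So three of your branches are incomplete, and two of them cannot be completed with the tools you cite.

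The paper's proof is one paragraph and takes an entirely different, characteristic-uniform route: Yuzvinsky's proof \cite[Theorem 4.2]{ys2004} that $C_2^3$ labels no dual $3$-net (reproduced in Appendix B) is purely synthetic --- it uses only the collinearity of triples $\alpha_1(x),\alpha_2(y),\alpha_3(xy)$ coming from the multiplication table, Desargues' theorem, and the theorem of the complete quadrilateral (harmonic pairs). The first ingredient is exactly what a light dual multinet labeling still provides (the $\alpha_i$ are injective and $xy=z$ implies collinearity; the converse implication, which distinguishes dual $3$-nets from light dual multinets, is never used), and the last ingredient is the only place where $p\neq 2$ enters. Hence the same nine-point Desargues/harmonic-pair argument rules out $C_2^3$ verbatim for light dual multinets, with no case distinction on $|H|$ and no appeal to any $p>n$ classification. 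To salvage your approach you would at minimum have to replace every use of Proposition \ref{pr:3net-properties}(iv) and of the \cite{knp_3} classification by arguments valid for $3\le p\le 7$, and actually perform the conic-gluing in the $|H|=2$ case; the synthetic route makes all of that unnecessary.
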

\begin{proof}
Yuzvinsky proved in \cite[Theorem 4.2]{ys2004} that $C_2^3$ cannot label a dual $3$-net. The proof uses the theorem of Desargues and the theorem of complete quadrilaterals. Hence, if the characteristic of the underlying field is different from two, then the proof works for light dual multinet labelings as well.
\end{proof}

\begin{proposition}\label{pr:elemabelian}
Let $r$ be prime and $G$ an elementary abelian $r$-group of order $r^k$. Assume $p=0$ or $p>|G|$. Let $\Lambda=(\Lambda_1,\Lambda_2,\Lambda_3)$ be a light dual multinet in $PG(2,\mathbb{K})$, labeled by $G$. Then one of the following holds:
\begin{enumerate}[(i)]
\item $\Lambda$ is contained in a line.
\item $k\leq 2$ and $\Lambda$ is an algebraic dual $3$-net.
\item $r=k=2$ and $\Lambda$ is the light dual multinet given in Proposition \ref{pr:index2class}.
\item $r=3$ and $k\geq 2$.
\end{enumerate}
\end{proposition}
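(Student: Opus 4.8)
The plan is to analyze the subgroup $H$ attached to a longest line $\ell$, exactly as set up at the beginning of Section 4. First I dispose of the degenerate possibility: if $\Lambda$ is contained in a line we are in case (i), so from now on I assume $G\neq H$, so that the setup and lemmas of Section 4 are available. Since $p=0$ or $p>|G|$, Lemma \ref{lm:Hiscyclic}(iv) tells us that $H$ is cyclic. But $G=C_r^k$ has exponent $r$, so its only cyclic subgroups have order $1$ or $r$; thus $|H|\in\{1,r\}$. When $|H|=1$ the longest line has length $1$, every line belonging to $\Lambda$ then has length exactly $1$, and hence $\Lambda$ is a dual $3$-net; when $|H|=r$ the line $\ell$ has length $r>1$ and $\Lambda$ is a proper light dual multinet. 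These two regimes are treated separately.

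In the dual $3$-net regime ($|H|=1$) I invoke the classification of group-labeled dual $3$-nets recalled in the Introduction (from \cite{knp_3}). Since $G$ is abelian, it cannot label a tetrahedron type net, which requires a nonabelian dihedral group of order $\geq 6$; hence, apart from the exceptional orders $8,12,24,60$, the net is algebraic and $G$ is cyclic or a subgroup of the group of an elliptic curve, so $G$ is generated by at most two elements and $k\leq 2$. The only elementary abelian group of exceptional order with $k\geq 3$ is $C_2^3$ (order $8$), and it is excluded by Lemma \ref{lm:c2^3}, since $p\neq 2$ (because $p=0$ or $p>8$). Thus $k\leq 2$ and $\Lambda$ is algebraic, which is case (ii).

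In the proper-multinet regime ($|H|=r$, so $k\geq 2$; here $H$ is a cyclic normal subgroup of order $r$, normality being automatic as $G$ is abelian) I argue according to the value of $r$. If $r\geq 7$, then $|H|\geq 7$ and Lemma \ref{lm:abelian_triang_or_conicline} forces $|G:H|\leq 3$; but $|G:H|=r^{k-1}\geq r\geq 7$, a contradiction, so this never occurs. If $r=5$, then Lemma \ref{lm:cyclicH_order5} requires $G/H\cong C_5^{k-1}$ to be of order $3$ or an elementary abelian $2$-group, which is impossible unless $k=1$; again excluded. If $r=3$ we simply record that we are in case (iv). Finally, if $r=2$ and $k=2$, then $H$ is cyclic of index $2$ and Proposition \ref{pr:index2class} puts $\Lambda$ into case (iii); while if $r=2$ and $k\geq 3$ I restrict the labeling to a subgroup $K\cong C_2^3$ containing $H$. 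The restriction $(\alpha_1(K),\alpha_2(K),\alpha_3(K))$ is a light dual multinet labeled by $C_2^3$, and it is not contained in a line, because the longest line of $\Lambda$ has length $2$ and so no line can carry the $8$ points of $\alpha_1(K)$; this contradicts Lemma \ref{lm:c2^3}. Collecting the surviving possibilities gives exactly (i)--(iv).

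I expect the delicate points to be the interface with the dual $3$-net classification rather than any new geometry: one must verify that elementary abelian groups with $k\geq 3$ genuinely fail to label a dual $3$-net (the relevant Jacobian has rank $\leq 2$, and tetrahedron type would need a nonabelian dihedral group), and one must handle the borderline group $C_2^2$ with care, since it may appear either as an algebraic dual $3$-net (case (ii)) or, when $|H|=2$, as the self-dual conic-line/tetrahedron multinet of Proposition \ref{pr:index2class} (case (iii)). The only genuinely new input is the reduction of the case $r=2$, $k\geq 3$ to $C_2^3$ by restricting the labeling to a suitable subgroup; there the one thing to check is that the restricted configuration is not contained in a line, which is immediate from the maximality of the length of $\ell$.
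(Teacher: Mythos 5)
Your proof is correct and follows essentially the same route as the paper's: the same reduction to $|H|\in\{1,r\}$ via Lemma \ref{lm:Hiscyclic}(iv), the citation of the dual $3$-net classification of \cite{knp_3} for case (ii), and the same use of Lemma \ref{lm:c2^3}, Proposition \ref{pr:index2class}, Lemma \ref{lm:abelian_triang_or_conicline} and Lemma \ref{lm:cyclicH_order5} to settle $r=2$, $r\geq 7$, $r=5$ and $r=3$. The only difference is that you spell out details the paper leaves implicit, notably the restriction to a $C_2^3$-subgroup when $r=2$, $k\geq 3$ together with the check that this restricted multinet is not contained in a line.
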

\begin{proof}
Let us assume that $\Lambda$ is not contained in a line. If $\Lambda$ is a dual $3$-net then $k\leq 2$ and $\Lambda$ is algebraic by \cite[Theorem 1]{knp_3}. Let us assume that $\Lambda$ is not a dual $3$-net, that is, $|H|>1$. As $H$ is cyclic by Lemma \ref{lm:Hiscyclic}, we have $|H|=r$ and $k\geq 2$ by the assumptions.

Assume $r=2$. Lemma \ref{lm:c2^3} implies $k\leq 2$. It remains to show that $|H|=r>3$ and $k\geq 2$ cannot occur. This follows from Lemma  \ref{lm:abelian_triang_or_conicline} when $r\geq 7$ and from Lemma  \ref{lm:cyclicH_order5} when $r=5$.
\end{proof}

\begin{lemma} \label{lm:onlylines}
Let $G$ be a finite group containing a normal subgroup $S$ of order $m\geq 3$. Let $(\Lambda_1,\Lambda_2,\Lambda_3)$ be a light dual multinet labeled by $G$. Assume that every dual submultinet of $(\Lambda_1,\Lambda_2,\Lambda_3)$ labeled by $S$ is a triangular dual $3$-net. Then $S$ is cyclic, and $|G:S|\leq 2$.
\end{lemma}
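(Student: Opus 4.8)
The plan is to use the ``diagonal'' subnet to fix the cyclicity of $S$ and a reference triangle, and then to bound $|G:S|$ by a vertex-chasing argument built on Proposition \ref{pr:triangles}. First I would feed the hypothesis into the dual submultinet $(\alpha_1(S),\alpha_2(S),\alpha_3(S))$: it is a triangular dual $3$-net, so Proposition \ref{pr:3net-properties}(i) gives at once that $S$ is cyclic and that its components lie on the three sides of a genuine triangle, say $\alpha_i(S)\subseteq t_i$, with vertices $A=t_2\cap t_3$, $B=t_1\cap t_3$, $C=t_1\cap t_2$. For $g\in G\setminus S$ the three coset subnets of Lemma \ref{lm:Hiscyclic}(ii), namely $(\alpha_1(S),\alpha_2(Sg),\alpha_3(Sg))$, $(\alpha_1(Sg),\alpha_2(S),\alpha_3(Sg))$ and $(\alpha_1(Sg),\alpha_2(Sg^{-1}),\alpha_3(S))$, are again triangular and each shares exactly one component with the diagonal subnet. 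Matching the two sides carrying $\alpha_2(S)$ and $\alpha_3(S)$ via Proposition \ref{pr:triangles} forces the line of $\alpha_1(Sg)$ through the vertex $A$ opposite to $t_1$; symmetrically $\alpha_2(Sg)$ lies on a line through $B$ and $\alpha_3(Sg)$ on a line through $C$. The only escape from this \emph{opposite-vertex rule} is that a coset lies on its own side, e.g. $\alpha_1(Sg)\subseteq t_1$; but this propagates to $\Lambda_i\subseteq t_i$ for all $i$, i.e. the whole configuration becomes a triangular dual $3$-net with every line of length $1$, which is excluded because our multinet carries a line of length at least two.

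Granting the opposite-vertex rule, I would obtain $|G:S|\le 2$ as follows. Suppose $|G:S|\ge 3$; then there are $g,h\in G\setminus S$ with $gh\notin S$, and the $S$-labeled subnet $(\alpha_1(Sg),\alpha_2(Sh),\alpha_3(Sgh))$ is triangular, its three sides being the lines $m_1,m_2,m_3$ carrying $\alpha_1(Sg)$, $\alpha_2(Sh)$, $\alpha_3(Sgh)$. Comparing this triangle with $(\alpha_1(Sg),\alpha_2(S),\alpha_3(Sg))$ along the common component $\alpha_1(Sg)$ through Proposition \ref{pr:triangles}, the vertex $A$ (which lies on $m_1$ by the opposite-vertex rule, and is the intersection of $m_1$ with $t_2$) must be one of $m_1\cap m_2$ or $m_1\cap m_3$. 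In the first case $m_2$ passes through $A$ and $B$, so $m_2=t_3$ and $\alpha_2(Sh)\subseteq t_3$; then the triangular subnet $(\alpha_1(Sh^{-1}),\alpha_2(Sh),\alpha_3(S))$ has two components on $t_3$, a contradiction. In the second case $m_3$ passes through $A$ and $C$, so $m_3=t_2$ and $\alpha_3(Sgh)\subseteq t_2$, whence $(\alpha_1(Sgh),\alpha_2(S),\alpha_3(Sgh))$ has two components on $t_2$, again impossible. Hence $gh\in S$ for all $g,h\in G\setminus S$; fixing one $g\notin S$ this reads $G\setminus S\subseteq g^{-1}S$, so $G\setminus S$ is a single coset and $|G:S|\le 2$.

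The main obstacle is precisely the exclusion of the ``home-side'' degeneracy that underlies the opposite-vertex rule: the entire argument collapses unless each coset component sits on the line through the \emph{opposite} vertex rather than on its own side $t_i$, and the configurations violating this are exactly the triangular dual $3$-nets. This is the one place where the standing assumption that $(\Lambda_1,\Lambda_2,\Lambda_3)$ is not itself a triangular dual $3$-net (it has a line of length $\geq 2$) is essential; I expect the delicate part of a full write-up to be showing that a single home-side incidence already propagates to $\Lambda_i\subseteq t_i$ for every $i$. Once the opposite-vertex rule is secured, the Proposition \ref{pr:triangles} vertex matching and the concluding group-theoretic step are routine.
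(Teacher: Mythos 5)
Your toolkit is the right one---the paper's own proof of this lemma is just a one-line reduction to \cite[Proposition 23]{knp_3}, and the vertex-matching via Proposition \ref{pr:triangles} on coset subnets that you set up is exactly the machinery behind that proposition. Your dichotomy (each $\alpha_i(Sg)$ lies either on a line through the vertex opposite $t_i$, or on $t_i$ itself) is correct, and so is your concluding argument that the opposite-vertex rule, \emph{if valid for all cosets}, forces $gh\in S$ for all $g,h\in G\setminus S$ and hence $|G:S|\le 2$. The genuine gap is precisely the step you defer: the claim that a single home-side incidence $\alpha_1(Sg)\subseteq t_1$ propagates to $\Lambda_i\subseteq t_i$ for all $i$. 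Vertex-chasing with Proposition \ref{pr:triangles} does propagate the home-side property to the other components of the \emph{same} coset, and then to $Sg^2$, $Sg^3,\dots$; but it does not exclude a mixed configuration in which the cosets of $\langle S,g\rangle$ sit on the sides $t_1,t_2,t_3$ while some other coset $Sh$ obeys the opposite-vertex rule: the identities one extracts by comparing $(\alpha_1(Sh),\alpha_2(Sg),\alpha_3(Shg))$, $(\alpha_1(Sh),\alpha_2(S),\alpha_3(Sh))$, etc., are all mutually consistent with such a hybrid. Ruling it out seems to need the finer tools the paper deploys in analogous situations, namely the cyclic collineation groups attached to triangular subnets (cf.\ Lemma \ref{lm:concurrent_lines}(iii) and Lemma \ref{lm:triang_index3}). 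Since your final step is only valid once the rule holds for every coset, this unproved propagation is the core of the lemma, not a routine verification.

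There is a second, related defect: you discharge the home-side case by invoking ``a line of length at least two,'' which is not among the hypotheses of the lemma. This cannot be waved away, because without some such non-degeneracy assumption the statement is false as written. Indeed, take the triangle-type dual $3$-net labeled by $(f_1,f_2,f_3)$ as in Section 3.1 with $\xi$ of order $9$, so $G=\langle\xi\rangle\cong C_9$, and let $S=\langle\xi^3\rangle$. Every dual submultinet labeled by $S$ is a coset subnet $\bigl(f_1(Su),f_2(Sv),f_3(Suv)\bigr)$ (any $S$-labeling is, after translation, an injective homomorphism onto the unique subgroup of order $3$), and each of these is contained in the triangle $X_1X_2X_3=0$, hence is a triangular dual $3$-net; yet $|G:S|=3$. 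So what your argument can prove, once the propagation step is supplied, is the dichotomy ``$S$ is cyclic and either $|G:S|\le 2$ or $\Lambda$ is itself a triangular dual $3$-net,'' and that corrected form is all the paper actually needs: in the lemma's only application (Case 3 of Proposition \ref{pr:Hisnormal}) $\Lambda$ has a line of length at least $9$, which excludes the degenerate horn. Your instinct about the missing assumption is therefore sound, but a complete write-up must both close the propagation gap and either prove the dichotomy version or add the hypothesis explicitly; as it stands, the proposal does neither.
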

\begin{proof}
The proof of \cite[Proposition 23]{knp_3} can be applied to the light dual multinet case.
\end{proof}

\begin{proposition}\label{pr:Hisnormal}
Let $G$ be a group of order $n$, $\K$ an algebraically closed field of characteristic $p$ and assume $p=0$ or $p>n$. Let $(\alpha_1,\alpha_2,\alpha_3)$ be a labeling $G\to \PG(2,\K)$ of the light dual multinet $\Lambda=(\Lambda_1,\Lambda_2,\Lambda_3)$, where $\Lambda_i=\alpha_i(G)$, $i=1,2,3$. Let $\ell$ be a line of maximal length of $\Lambda$ and assume that $\Lambda_i\cap \ell = \alpha_i(H)$ for the subgroup $H$ of $G$. If $\Lambda$ has no lines of length $3$ and $|H|\geq 9$ then $H$ is normal in $G$.
\end{proposition}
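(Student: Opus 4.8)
The plan is to fix an arbitrary $x\in G\setminus H$ and prove $xHx^{-1}=H$; since $x$ is arbitrary this yields $H\triangleleft G$. By Lemma~\ref{lm:Hiscyclic} we already know $H$ is cyclic, say $|H|=m\ge 9$, and that the $H$-labeled dual subnets of Lemma~\ref{lm:Hiscyclic}(ii) are available. I would work with $A=(\alpha_1(H),\alpha_2(Hx),\alpha_3(Hx))$ and $B=(\alpha_1(xH),\alpha_2(H),\alpha_3(xH))$: in $A$ the component $\alpha_1(H)$ lies on $\ell$, in $B$ the component $\alpha_2(H)$ lies on $\ell$, and both subnets contain an $\alpha_3$-coset. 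By Proposition~\ref{pr:3net-properties}(iv) and $p=0$ or $p>n$, the two off-$\ell$ components of each subnet lie on a conic. I first dispose of the degenerate case where such a conic splits: this makes the subnet of pencil or triangle type. Pencil type is impossible, since by Proposition~\ref{pr:3net-properties}(ii) it would force $H$ to be a $p$-group, whereas $p>n\ge m$. In the triangular case I would use Proposition~\ref{pr:triangles} and Lemma~\ref{lm:concurrent_lines}(i) to produce sides carrying full $H$-cosets and then, invoking the maximality of $\ell$ together with the no-length-$3$ hypothesis, force the whole $\langle H,x\rangle$-submultinet to be triangular or of tetrahedron type, so that Lemma~\ref{lm:triang_index3}, Lemma~\ref{lm:tetrahedron_index2} give $xHx^{-1}=H$ outright.

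So assume both conics are nonsingular; call them $C\supseteq\alpha_2(Hx)\cup\alpha_3(Hx)$ and $D\supseteq\alpha_1(xH)\cup\alpha_3(xH)$. By Proposition~\ref{pr:3net-properties}(iii) each subnet carries a cyclic collineation group of order $m$; let $\varphi$ generate that of $A$ and $\psi$ that of $B$. On $\ell$ the group $\langle\varphi\rangle$ has $\alpha_1(H)$ as a regular orbit and $\langle\psi\rangle$ has $\alpha_2(H)$ as a regular orbit, and the two fixed points of each restriction are exactly $C\cap\ell$, resp. $D\cap\ell$. The elementary fact I would isolate here is that, because $m\ge 3$, a set of $m$ points on a line is the regular orbit of \emph{at most one} cyclic subgroup of order $m$ of $\mathrm{PGL}_2$; hence $\langle\varphi\rangle|_\ell$, $\langle\psi\rangle|_\ell$ and their fixed pairs are uniquely determined by $\alpha_1(H)$ and $\alpha_2(H)$.

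The heart of the argument is the implication: \emph{if $C=D$ then $Hx=xH$}. Writing $\Gamma:=C=D$, both $\varphi$ and $\psi$ fix $\Gamma$ and $\ell$, hence both fix the two points $\Gamma\cap\ell$ and therefore lie in the one-parameter torus of $\mathrm{PGL}_2$ stabilising $\Gamma$ and these two points. Since $\varphi,\psi$ have order $m$, they generate the same unique cyclic subgroup $\mathcal T$ of order $m$ of that torus. Thus $\alpha_3(Hx)$ and $\alpha_3(xH)$ are two $\mathcal T$-orbits on $\Gamma$, and since $x\in Hx\cap xH$ they share the point $\alpha_3(x)$; orbits that meet coincide, so $\alpha_3(Hx)=\alpha_3(xH)$ and, $\alpha_3$ being injective, $Hx=xH$.

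It remains to force $C=D$, and this is the step I expect to be the main obstacle. The conics already share the off-$\ell$ points $\alpha_3(Hx\cap xH)=\alpha_3((H\cap x^{-1}Hx)x)$, whose number is $d:=|H\cap x^{-1}Hx|$, together with the points of $C\cap\ell$ and $D\cap\ell$ when these fixed pairs coincide. As two distinct conics meet in at most four points, it suffices to exhibit five common points; when $d\ge 5$ this is immediate and we are done. The delicate range is $1\le d\le 4$, where $x^{-1}Hx$ is a proper conjugate meeting $H$ in a small cyclic subgroup. Here, without normality a priori, $HxH$ need not be a single coset, so $\alpha_3(Hx)$ is spread across the $m/d$ distinct left cosets inside $HxH$, each carried by its own conic; I would combine the conic-intersection counts over all these cosets with the non-extendability results of Section~5 and play them against $m\ge 9$ and the absence of length-$3$ lines to eliminate $d\le 4$ (this is precisely where the no-length-$3$ hypothesis is meant to bite, and where the bulk of the technical work lies). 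Once $d\le 4$ is excluded we obtain $C=D$, hence $Hx=xH$ for every $x$, that is, $H\triangleleft G$.
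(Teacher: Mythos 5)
There is a genuine gap, and it sits exactly where you predicted it would: the case $d=|H\cap x^{-1}Hx|\leq 4$, and in particular the malnormal case $H\cap x^{-1}Hx=\{1\}$, is not handled but merely deferred (``I would combine the conic-intersection counts \dots and play them against $m\geq 9$''). No coset-by-coset conic counting can close this case: when $Hx$ and $xH$ share only the element $x$, your conics $C$ and $D$ have only the single known common point $\alpha_3(x)$ (plus possibly a fixed pair on $\ell$), which is far from the five points needed, and nothing forces $C=D$. The paper's own proof shows that this case genuinely requires input from abstract finite group theory, not geometry: it first disposes of $N_G(H)\neq H$ via Lemma~\ref{lm:abelian_triang_or_conicline} and the non-extendability Lemmas~\ref{lm:triang_noext}, \ref{lm:conicline_noext}, \ref{lm:tetrahedron_noext}; then, assuming $H=N_G(H)$, it shows the surviving configuration forces $xHx^{-1}\cap H=\{1\}$ for all $x\in G\setminus H$, so that $G$ is primitive on the cosets of the cyclic subgroup $H$, and invokes Herstein's theorem (solvability, $G=S\rtimes H$ with $S$ elementary abelian normal) and Horo\v{s}evski\u{\i}'s theorem ($|S|>|H|$), finishing with the elementary-abelian analysis of Proposition~\ref{pr:elemabelian} and Lemma~\ref{lm:onlylines}. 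Note also that the no-length-$3$ hypothesis enters there (to exclude $r=3$ pathologies in $S$-labeled subnets), not through conic intersections as you anticipate.

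A secondary but real defect: in your degenerate (triangular) branch you invoke Lemma~\ref{lm:concurrent_lines}(i), whose hypotheses include that $H$ is \emph{normal} in $G$ --- precisely the conclusion you are trying to prove --- and whose proof uses the subnet $(\alpha_1(Hx),\alpha_2(H),\alpha_3(Hx))$, which by Lemma~\ref{lm:Hiscyclic}(ii) is only available when $Hx=xH$. So that step is circular as written; the paper instead works in this branch with the genuinely available subnet $(\alpha_1(xH),\alpha_2(Hx^{-1}),\alpha_3(xHx^{-1}))$, and its triangularity yields only $xHx^{-1}\cap H=\{1\}$ (the line carrying $\alpha_3(xHx^{-1})$ need not be $\ell$), which is again the Frobenius-type situation your outline cannot eliminate. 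Your ``heart of the argument'' ($C=D$ implies $Hx=xH$) and the case $d\geq 5$ are fine, and are close in spirit to the paper's Cases 1--2, but the proposal as a whole is not a proof without the group-theoretic core.
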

\begin{proof}
In order to avoid triviality we assume that $\Lambda$ is not contained in a line, that is, $H\lneqq G$. By Lemma \ref{lm:Hiscyclic}(iv), $H$ is cyclic. If $N_G(H)\neq H$ then by Lemma \ref{lm:abelian_triang_or_conicline} we have a submultinet which is either triangle, tetrahedron or conic-line type, and which cannot be extended by Lemmas \ref{lm:triang_noext}, \ref{lm:conicline_noext} and \ref{lm:tetrahedron_noext}. Hence, $H\triangleleft G$.

Assume now $H=N_G(H)$, and that $H$ is maximal in $G$. Fix an element $x\in G\setminus H$.

Case 1: $\alpha_1(xH)$ is not contained in a line. Then, the dual 3-net
\[(\alpha_1(xH), \alpha_2(H), \alpha_3(xH))\]
is contained in $C\cup \ell$ where $C$ is an irreducible conic. The dual 3-net
\[(\alpha_1(xH), \alpha_2(Hx^{-1}), \alpha_3(xHx^{-1}))\]
is contained in a cubic $\Gamma$. As $|C\cap \Gamma|>6$, $\Gamma=C\cup m$ for some line $m$. Moreover, there is cyclic collineation group preserving $C$ and having fixed points $C\cap \ell$, $C\cap m$. This implies $\ell=m$ and $H=xHx^{-1}$, a contradiction.

Case 2: $\alpha_2(Hx^{-1})$ is not contained in a line. The same argument using the dual 3-nets $(\alpha_1(H), \alpha_2(Hx^{-1}), \alpha_3(Hx^{-1}))$ and $(\alpha_1(xH),\alpha_2(Hx^{-1}),\alpha_3(xHx^{-1}))$ leads to contradiction.

Case 3: For each $x\in G\setminus H$, $\alpha_1(xH)$ and $\alpha_2(Hx)$ are contained in lines. Then the dual subnet $(\alpha_1(xH), \alpha_2(Hx^{-1}), \alpha_3(xHx^{-1}))$ is triangular and $\alpha_3(xHx^{-1})$ is contained in a line $m$. As $m\neq \ell$, we have $xHx^{-1}\cap H=\{1\}$ for all $x\in G\setminus H$. In particular, $\mathrm{core}_G(H)=1$ and $G$ is a primitive permutation group on the cosets of $H$ in $G$. Herstein's Theorem \cite{Herstein} implies that $G$ is solvable and $G= S\rtimes H$, where $S$ is an elementary abelian normal subgroup of $G$. Since $H$ is cyclic, Horo\v{s}evski\u\i's Theorem \cite{Horosevskii} implies $|S|>|H|$, which means that no submultinet labeled by $S$ is contained in a line. Let $S$ have order $r^k$ with prime $r$.

If $r=2$ then $|S|\leq 4$ by Proposition \ref{pr:elemabelian}(iii), a contradiction. Thus, $r\geq 3$ and all dual submultinets labeled by $S$ are dual $3$-nets. This follows from Proposition \ref{pr:elemabelian}(ii) and (iii), using the non-existence of lines of length $3$ when $r=3$. If all dual subnets labeled by $S$ are triangular, then $|G:S|\leq 2$ by Lemma \ref{lm:onlylines}, which is impossible. Hence, there is an element $x\in S$ and $i\in \{1,2,3\}$ such that $\alpha_i(Sx)$ is not contained in a line. We can assume $i=1$ without loss of generality.

Let $\Gamma_y$ denote the cubic curve containing the dual $3$-net $(\alpha_1(Sx),\alpha_2(Sy),\alpha_3(Sxy))$, labeled by $S$. As $|\Gamma_1\cap \Gamma_y|\geq |S|>9$, either $\Gamma_1=\Gamma_y$ for all $y\in G$, or $\Gamma_1$ and $\Gamma_y$ have a common irreducible conic component $C$. In the latter case, $\alpha_1(Sx) \subset C$ and the line component of $\Gamma_y$ passes through the two fixed points of the corresponding cyclic collineation. This means that $\Gamma_1=\Gamma_y$ for all $y\in G$ also in this case. It follows that $\Lambda_2$ is contained in $\Gamma_1$, which contradicts the fact that $|\Lambda_2\cap \ell|\geq 9$. This is our final contradiction.
\end{proof}

\begin{proof}[Proof of Theorem \ref{th:main}]
Let $\ell$ be a line of maximal length of $\Lambda$ and assume that $\Lambda_i\cap \ell = \alpha_i(H)$ for the subgroup $H$ of $G$. Assume that neither $\Lambda$ is contained in a line, nor it has a line of length $3$. Then Proposition \ref{pr:Hisnormal} implies that $H$ is a cyclic normal subgroup of $G$. Lemma \ref{lm:abelian_triang_or_conicline} proves the theorem.
\end{proof}

\appendix
\section{GAP program for the light dual multinet of order 18}

\begin{scriptsize}
\begin{verbatim}
xi:=E(9); w:=xi^3;
Lam:=[
  [
    [0, 1, 1],           [0, w^2, 1],         [0, w, 1],
    [-xi^8, 1, 0],       [-xi^2, 1, 0],       [-xi^5, 1, 0],
    [xi^2, 0, 1],        [xi^8, 0, 1],        [xi^5, 0, 1],
    [-xi^2, -1, 1],      [-xi^8, -w, 1],      [-xi^5, -w^2, 1],
    [-xi^2, -w^2, 1],    [-xi^8, -1, 1],      [-xi^5, -w, 1],
    [-xi^2, -w, 1],      [-xi^8, -w^2, 1],    [-xi^5, -1, 1]
  ],[
    [0, xi^2, 1],        [0, xi^8, 1],        [0, xi^5, 1],
    [-xi, 1, 0],         [-xi^4, 1, 0],       [-xi^7, 1, 0],
    [1, 0, 1],           [w^2, 0, 1],         [w, 0, 1],
    [-1, -xi^5, 1],      [-w^2, -xi^8, 1],    [-w, -xi^2, 1],
    [-1, -xi^2, 1],      [-w^2, -xi^5, 1],    [-w, -xi^8, 1],
    [-1, -xi^8, 1],      [-w^2, -xi^2, 1],    [-w, -xi^5, 1]
  ],[
    [0, xi^4, 1],        [0, xi, 1],          [0, xi^7, 1],
    [xi, 0, 1],          [xi^4, 0, 1],        [xi^7, 0, 1],
    [-1, 1, 0],          [-w^2, 1, 0],        [-w, 1, 0],
    [-xi^4, -xi^4, 1],   [-xi^7, -xi, 1],     [-xi, -xi^7, 1],
    [-xi^7, -xi^7, 1],   [-xi, -xi^4, 1],     [-xi^4, -xi, 1],
    [-xi, -xi, 1],       [-xi^4, -xi^7, 1],   [-xi^7, -xi^4, 1]
  ]
];;

ls:=List([1..18],i->List([1..18],j->-1));;
for i in [1..18] do
  for j in [1..18] do
    for k in [1..18] do
      a:=DeterminantMat([Lam[1][i],Lam[2][j],Lam[3][k]]);
      if IsZero(a) then
        if ls[i][j]=-1 then ls[i][j]:=k; else ls[i][j]:=-2; fi;
      fi;
    od;
  od;
od;
Display(ls);

rho:=List([10..18],i->PermList(ls[i]));
Size(Group(rho/rho[1]));
\end{verbatim}
\end{scriptsize}

\section{Yuzvinsky's proof when $G=C_2^3$}

In this section we preprint the S. Yuzvinsky's proof \cite{ys2004} on the non-existence of dual $3$-nets labeled by the elementary abelian group of order $8$. We remark that the proof uses on the one hand the theorem of Desargues which works over arbitrary characteristic, and on the other hand the theorem of complete quadrilaterals which works over arbitrary characteristic different from $2$.

\begin{lemma}[\text{\cite[Lemma 4.1]{ys2004})}] \label{lm:yuz2004}
Suppose there are nine points $a_1,a_2,a_3$, $b_1,b_2,b_3$, $c_1$, $c_2$, $	c_3$ such that the following triples are collinear: $\{a_1,b_1,c_1\}$, $\{a_1,b_2,c_2\}$, $\{a_1,b_3,c_3\}$, $\{a_2,b_1,c_3\}$, $\{a_2,b_3,c_1\}$, $\{a_3,b_2,c_3\}$ and $\{a_3,b_3,c_2\}$. Then the lines $\{(a_2a_3), (b_1b_2), (c_1c_2)\}$ intersect at one point.
\end{lemma}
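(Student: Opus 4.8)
The plan is to deduce the concurrency from the converse of Desargues' theorem applied to a carefully chosen pair of triangles. First I would record that the three lines named in the conclusion, $a_2a_3$, $b_1b_2$ and $c_1c_2$, each join two of the nine given points and that none of them is among the seven lines carrying the hypothesised collinear triples; so the content of the lemma is a genuine concurrency assertion and not a triviality.

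The key move is to pair the triangle $a_2b_1c_1$ with the triangle $a_3b_2c_2$, matching vertices by $a_2\leftrightarrow a_3$, $b_1\leftrightarrow b_2$, $c_1\leftrightarrow c_2$. Under this matching the three lines joining corresponding vertices are exactly $a_2a_3$, $b_1b_2$, $c_1c_2$; hence proving that the two triangles are perspective from a point is precisely the desired conclusion. By the converse of Desargues' theorem it therefore suffices to show that the two triangles are perspective from a line, i.e.\ that the three intersection points of corresponding sides are collinear.

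I would then identify those intersection points directly from the hypotheses. The sides $b_1c_1$ and $b_2c_2$ are the lines of the triples $\{a_1,b_1,c_1\}$ and $\{a_1,b_2,c_2\}$, so they meet at $a_1$. The sides $a_2c_1$ and $a_3c_2$ are the lines of $\{a_2,b_3,c_1\}$ and $\{a_3,b_3,c_2\}$, so they meet at $b_3$. Finally the sides $a_2b_1$ and $a_3b_2$ are the lines of $\{a_2,b_1,c_3\}$ and $\{a_3,b_2,c_3\}$, so they meet at $c_3$. Thus the three intersection points of corresponding sides are $a_1$, $b_3$, $c_3$, and these are collinear because they form the one remaining hypothesised triple $\{a_1,b_3,c_3\}$. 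The two triangles are therefore perspective from the line $a_1b_3c_3$, and the converse of Desargues yields the concurrency of $a_2a_3$, $b_1b_2$, $c_1c_2$.

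The only delicate point, and the main obstacle to a fully rigorous write-up, is non-degeneracy: the converse of Desargues presupposes that $a_2b_1c_1$ and $a_3b_2c_2$ are genuine (non-collinear) triples and that corresponding sides are distinct, so that the three intersection points are well defined. I would check that any failure of these conditions forces coincidences or collinearities among the $a_i,b_i,c_i$ that are excluded by the nine points being distinct and in general position, as is implicit where the lemma is invoked. Since Desargues' theorem and its converse hold in $\PG(2,\K)$ over any field, no restriction on the characteristic is needed for this lemma; the characteristic hypothesis in the surrounding discussion enters only through the theorem on complete quadrilaterals used in the other part of Yuzvinsky's argument.
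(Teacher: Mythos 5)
Your proof is correct, but it runs Desargues in the opposite direction from the paper's proof and with a different pair of triangles. The paper pairs $b_1b_2c_3$ with $c_1c_2b_3$, notes that the lines joining corresponding vertices ($b_1c_1$, $b_2c_2$, $b_3c_3$) all pass through $a_1$ by three of the hypothesised triples, and then applies the \emph{forward} Desargues implication (perspective from a point implies perspective from a line) to conclude that $a_2$, $a_3$ and $b_1b_2\cap c_1c_2$ are collinear, which is the assertion. You instead pair $a_2b_1c_1$ with $a_3b_2c_2$, verify perspectivity from the line $a_1b_3c_3$ using the other hypothesised triples, and invoke the \emph{converse} of Desargues to obtain perspectivity from a point. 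Both arguments live inside the same ten-point, ten-line Desargues configuration (the nine given points plus the concurrency point; the seven given lines plus the three lines of the conclusion), just anchored differently: the paper takes the known point $a_1$ as the center, so the forward implication suffices, while you take the sought concurrency point as the center, which forces you to use the converse. Your choice buys a slightly cleaner ending --- the conclusion is literally ``perspective from a point,'' with no extra remark that the intersection of $b_1b_2$ and $c_1c_2$ lies on $a_2a_3$ --- at the cost of having to assemble the axis from three different triples rather than reading the center off at once. Your two closing observations are also accurate: both directions of Desargues hold in $\PG(2,\K)$ in every characteristic (the paper's appendix confirms that the characteristic restriction comes only from the complete-quadrilateral step used elsewhere in Yuzvinsky's argument), and the non-degeneracy issue you flag is genuine but equally glossed over by the paper, whose proof simply asserts that the triangles ``form a Desargues configuration.''
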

\begin{proof}
The triangles $b_1b_2c_3$ and $c_1c_2b_3$ form a Desargues configuration. More precisely, the lines passing through pairs of corresponding vertices intersect at $a_1$. Besides $(b_1c_3) \cap (c_1b_3)=\{a_2\}$ and $(b_2c_3)\cap (c_2b_3)=\{a_3\}$. Now the result follows from the second Desargues theorem.
\end{proof}

\begin{theorem}[\text{\cite[Theorem 4.2]{ys2004})}]
The group $\mathbb{Z}_2^3$ cannot be realized.
\end{theorem}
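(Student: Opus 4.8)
The plan is to prove that the elementary abelian group $\mathbb{Z}_2^3$ cannot label a dual $3$-net by deriving a contradiction from a suitable collinearity configuration. First I would set up nine points $a_i,b_j,c_k$ as the images $\alpha_1(g), \alpha_2(g), \alpha_3(g)$ of carefully chosen group elements, exploiting the fact that in $C_2^3$ every nonidentity element is an involution and the group has rich internal structure (several subgroups isomorphic to $C_2^2$ and many coset relations). The collinearity of a triple $\{a_i, b_j, c_k\}$ corresponds exactly to the product relation $g_i \cdot g_j = g_k$ in the group, so the combinatorial incidence pattern required by Lemma~\ref{lm:yuz2004} must be matched against the multiplication table of $C_2^3$. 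The key is to select the labels so that the seven hypothesized collinear triples of the lemma all correspond to genuine product relations, while simultaneously a Desargues-type consequence forces a collinearity (or concurrence) that the group structure forbids.

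The central mechanism is Lemma~\ref{lm:yuz2004}: once the seven triples are realized as honest collinearities of the dual $3$-net, the lemma concludes that the three lines $(a_2a_3)$, $(b_1b_2)$, $(c_1c_2)$ meet at a single point. I would then interpret each of these three lines as a line belonging to the dual $3$-net and read off, via Lemma~\ref{lm:lengthdef}, the product relations they encode. The forced concurrence translates into an additional collinearity among the nine points that in turn demands a product relation $g_i \cdot g_j = g_k$ which is \emph{incompatible} with the relations already imposed — concretely, it would force two distinct group elements to coincide, or force an element to equal its own inverse in a way that collapses the $C_2^3$ structure. Because the dual $3$-net is required to encode products \emph{if and only if} the corresponding points are collinear, any spurious forced collinearity yields a contradiction with the defining bijective labeling.

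The main obstacle will be the bookkeeping: I must choose the nine group elements so that (a) exactly the seven triples of the lemma are collinear and no unintended collinearities among the chosen nine points occur beforehand, and (b) the Desargues-forced concurrence genuinely produces a contradiction rather than a relation already satisfied. This is delicate because $C_2^3$ is highly symmetric and abelian, so many product relations hold simultaneously; the risk is that the forced collinearity is automatically consistent and yields no contradiction. I would resolve this by tracking the configuration through the subgroup lattice of $C_2^3$, using that a $C_2^2$ subnet is itself realizable (so the internal quadrilateral structure is forced) while the full $C_2^3$ is not. The decisive point, as flagged in the preamble to this section, is that the argument relies on the theorem of complete quadrilaterals, which holds only in characteristic $\neq 2$; I would therefore invoke that the diagonal points of a complete quadrilateral are non-collinear (valid precisely when $p \neq 2$) to exclude the degenerate coincidence that would otherwise save the configuration. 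This non-collinearity of diagonal points is exactly what rules out the remaining case and completes the contradiction.
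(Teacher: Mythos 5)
There is a genuine gap, and it lies in your central mechanism for producing the contradiction. You propose that the concurrence forced by Lemma~\ref{lm:yuz2004} ``translates into an additional collinearity among the nine points'' which would then encode a product relation $g_i\cdot g_j=g_k$ incompatible with the group structure. This cannot work: the three lines in the conclusion of Lemma~\ref{lm:yuz2004}, namely $(a_2a_3)$, $(b_1b_2)$ and $(c_1c_2)$, each join two points of the \emph{same} component. In a dual $3$-net a line through two points of one component contains no point of either other component, so these lines do not belong to the net, Lemma~\ref{lm:lengthdef} does not apply to them, and their common point is not a net point. No group relation, spurious or otherwise, can be read off from their concurrence. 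Moreover, with only nine points and a single application of the lemma you obtain exactly one concurrence, which is perfectly consistent --- this is the very ``risk'' you flag yourself, and your proposed remedy (tracking the subgroup lattice) is not an argument.

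What the actual proof does differently, and what your sketch is missing, is the propagation step over \emph{all} $24$ points. One writes down the full $8\times 8$ pairing matrix of $\mathbb{Z}_2^3$ and applies Lemma~\ref{lm:yuz2004} repeatedly to different $3\times 3$ selections of rows and columns (e.g.\ rows $a_1,a_3,a_4$ with columns $b_1,b_2,b_3$, then rows $a_1,a_5,a_6$ with columns $b_1,b_2,b_5$, etc.). Each application shares two of the three concluded lines with the previous ones, so a \emph{single} point $x$ is successively forced onto $(a_3a_4)$, $(a_5a_6)$, $(a_7a_8)$, $(b_5b_6)$, and finally onto $(a_1a_2)$, $(b_1b_2)$ and $(c_1c_2)$ simultaneously. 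Only then does the characteristic-$\neq 2$ ingredient enter, and not as an exclusion of a degenerate case but as the core of the contradiction: in the complete quadrangle $a_1b_1b_2a_2$ the diagonal points are $c_1=(a_1b_1)\cap(a_2b_2)$ and $c_2=(a_1b_2)\cap(a_2b_1)$, so $x_1=(c_1c_2)\cap(a_1a_2)$ and $x_2=(c_1c_2)\cap(b_1b_2)$ form a harmonic pair with respect to $\{c_1,c_2\}$ and are therefore distinct when $p\neq 2$; but the propagation gives $x_1=x=x_2$. You correctly identified both tools (the Desargues-based lemma and the quadrangle/harmonic theorem), but without the iterated applications that pin down $x$ on all three lines $(a_1a_2)$, $(b_1b_2)$, $(c_1c_2)$, there is no configuration to which the harmonic argument can be applied, and your alternative group-theoretic route to a contradiction is not available.
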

\begin{proof}
Suppose that $\mathbb{Z}_2^3$ is realized by three sets of points $\{a_i\}$, $\{b_i\}$ and $\{c_i\}$ where $i=1,2,\ldots,8$. Let us consider the matrix $C$ of the pairing generated by the collinearity relation. For that we enumerate the rows of this matrix by $a_i$ and the columns by $b_j$. The $(a_i,b_j)$ entry is the point $c_k$ lying on the line $(a_ib_j)$. Without loss of generality we can assume that
\[C=\begin{pmatrix} A&B\\B&A \end{pmatrix}\]
where
\[A=\begin{pmatrix}
c_1 & c_2 & c_3 & c_4 \\
c_2 & c_1 & c_4 & c_3 \\
c_3 & c_4 & c_1 & c_2 \\
c_4 & c_3 & c_2 & c_1
\end{pmatrix}\]
and
\[B=\begin{pmatrix}
c_5 & c_6 & c_7 & c_8 \\
c_6 & c_5 & c_8 & c_7 \\
c_7 & c_8 & c_5 & c_6 \\
c_8 & c_7 & c_6 & c_5
\end{pmatrix}.\]
Using Lemma \ref{lm:yuz2004} for rows $a_1, a_3, a_4$ and columns $b_1,b_2,b_3$, we find that the lines $\{(a_3a_4), (b_1b_2), (c_1c_2)\}$ intersect at a common point, say $x$. Unsing the lemma for rows $a_1,a_5,a_6$ and columns $b_1,b_2,b_5$, we find that $x \in (a_5a_6)$ and similarly $x\in (a_7a_8)$. Using the lemma again for rows $a_5,a_7,a_8$ and columns $b_5,b_6,b_7$, we have $x\in (b_5b_6) \cap (a_1a_2)$.

Now consider the quadrangle $a_1b_1b_2a_2$. We have $(a_1b_1)\cap (a_2b_2)=\{c_1\}$ and $(a_1b_2)\cap (a_2b_1) =\{c_2\}$. Thus by the diagonal theorem the points $x_1=(c_1c_2)\cap (a_1a_2)$ and $x_2=(c_1c_2)\cap (b_1b_2)$ form a harmonic pair for the pair $\{c_1,c_2\}$. By the inclusions in the previous paragraph, however, $x_1=x=x_2$. This contradiction completes the proof.
\end{proof}

\end{document}